\documentclass[11pt]{amsart}

 \usepackage{amsmath,amsthm,amsfonts,amssymb,verbatim}

 \setlength{\oddsidemargin}{0.25in} 
\setlength{\evensidemargin}{0.25in}


\setlength{\textwidth}{6in}

 \setlength{\parindent}{0pt}
 \setlength{\parskip}{10pt}

 \usepackage{graphicx}
 \usepackage[all]{xy}
  \usepackage{pinlabel}

\def\co{\colon\thinspace}

\newcommand{\Br}{\mathbf{\Sigma}}

\newcommand{\sC}{\mathcal{C}}

\newcommand{\bZ}{\mathbb{Z}}
\newcommand{\bQ}{\mathbb{Q}}

\newcommand{\bF}{\mathbb{F}}

\newcommand{\pq}{\textstyle\frac{p}{q}}

\newcommand{\overzero}{\textstyle\frac{1}{0}}

\newcommand{\si}{\sigma}

\newcommand{\close}{\overline}

\newcommand{\half}{\frac{1}{2}}


\newcommand{\Khred}{\widetilde{\operatorname{Kh}}{}}
\newcommand{\CKhred}{\widetilde{\operatorname{CKh}}}
\newcommand{\rk}{\operatorname{rk}}


\newcommand{\zero}
	{\raisebox{-2pt}
	{\includegraphics[scale=0.085]{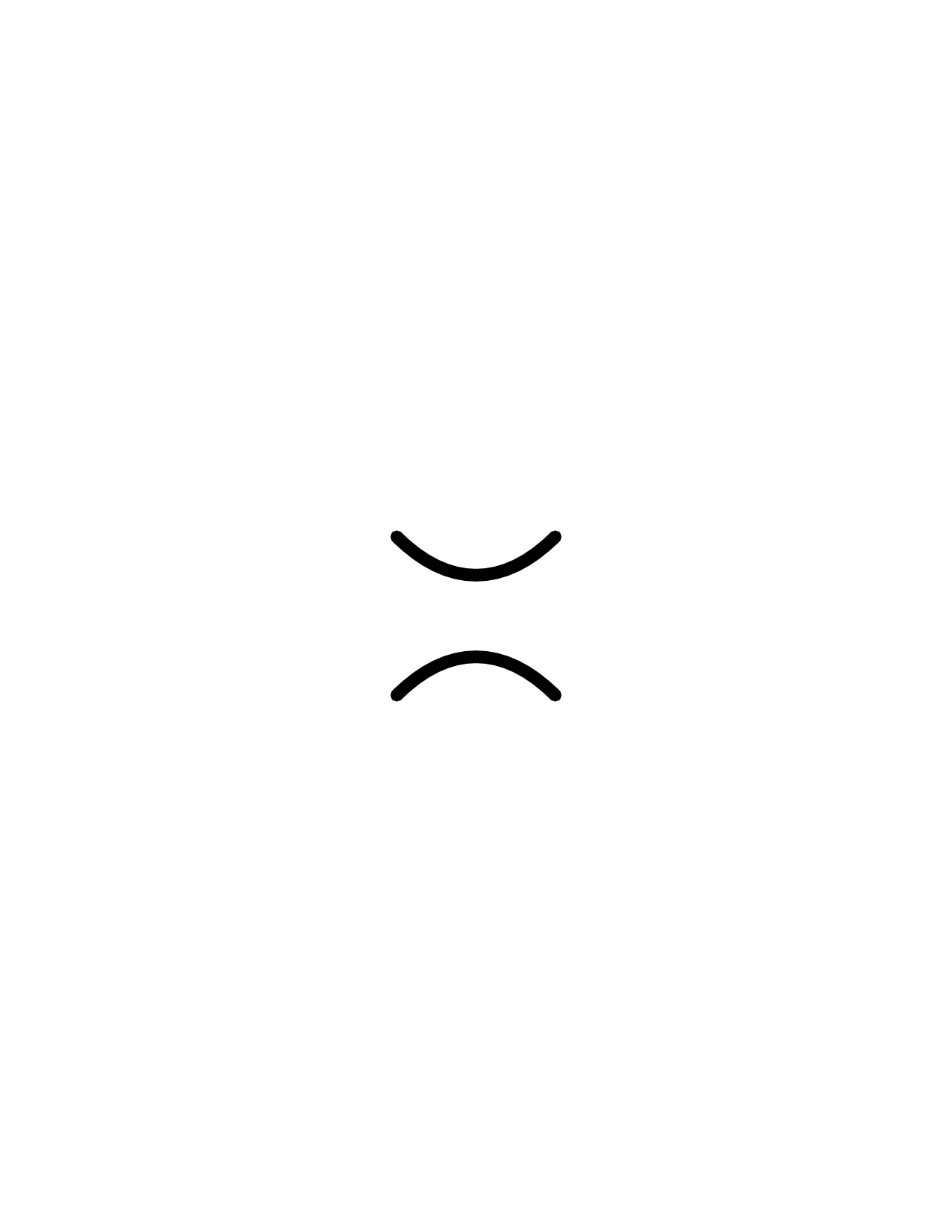}}}
\newcommand{\one}
	{\raisebox{-2pt}
	{\includegraphics[scale=0.085]{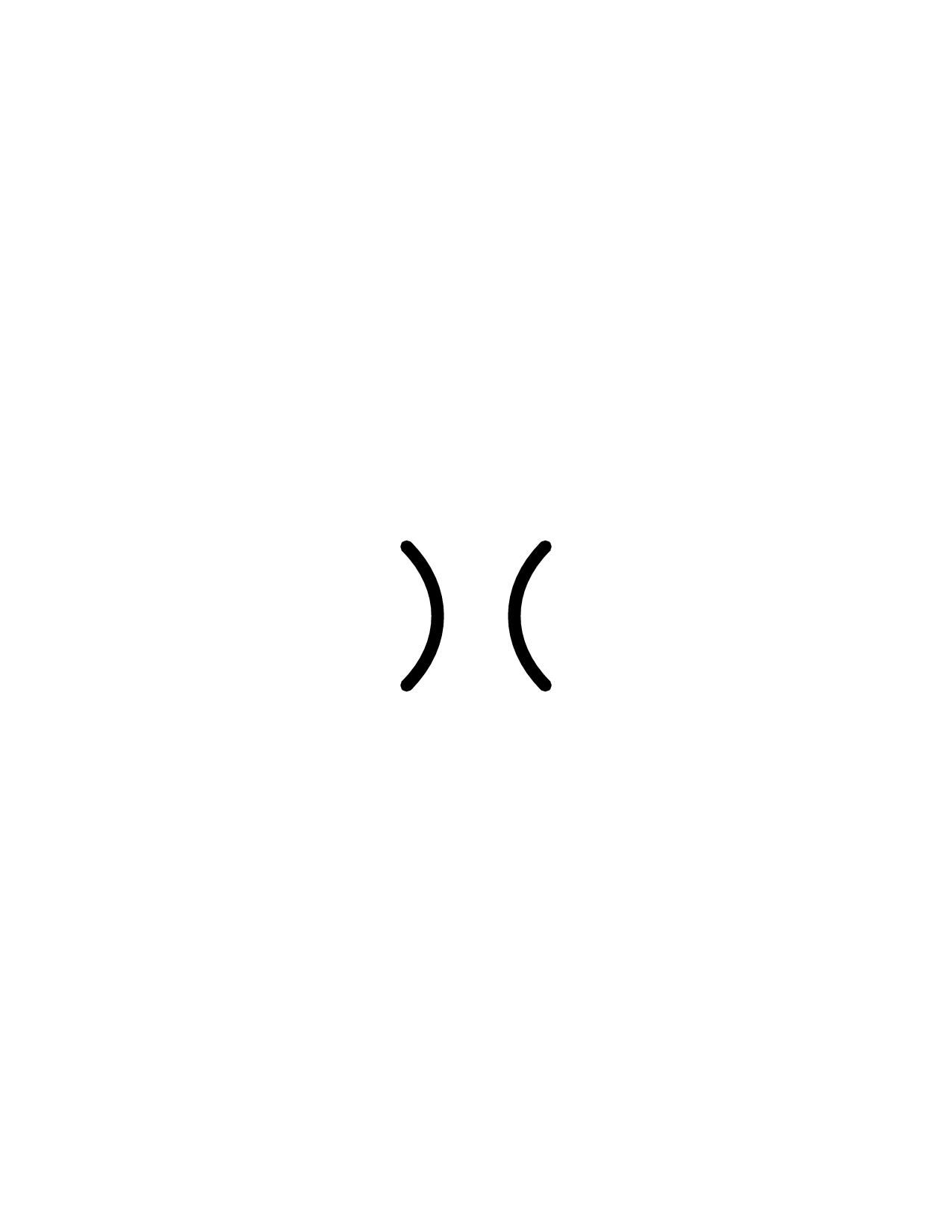}}}

\newcommand{\rightcross}
	{\raisebox{-2pt}
	{\includegraphics[scale=0.085]{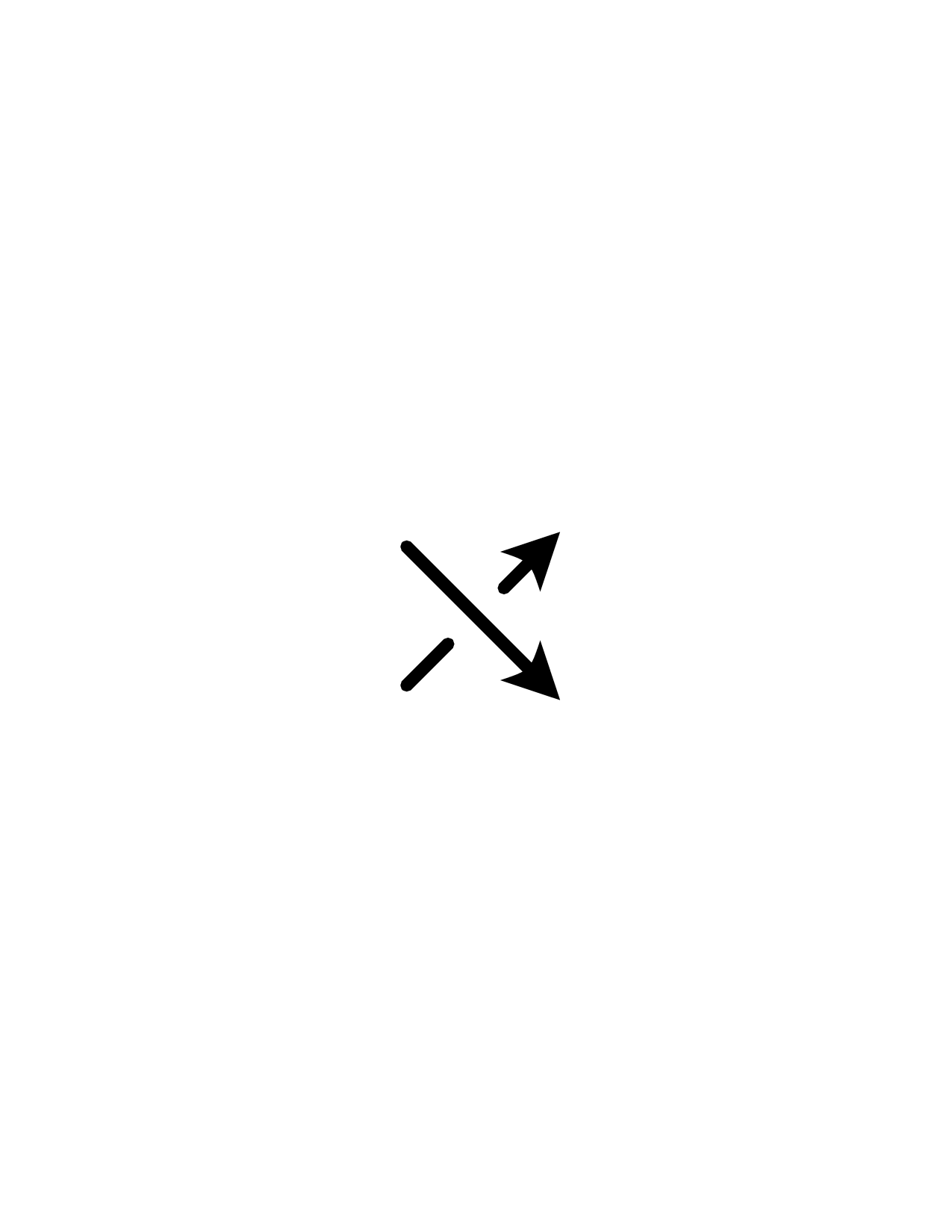}}}

\newtheorem*{namedtheorem}{\theoremname}
\newcommand{\theoremname}{testing}

\address[lwatson@math.ucla.edu]{Liam Watson, Department of Mathematics, University of California at Los Angeles, 520 Portola Plaza, Los Angeles 90095, USA}


\newtheorem{theorem}{Theorem}[section]
\newtheorem{corollary}[theorem]{Corollary}

\newtheorem{proposition}[theorem]{Proposition}

\newtheorem{claim}[theorem]{Claim}

\theoremstyle{definition}
\newtheorem{definition}[theorem]{Definition}

\newtheorem{remark}[theorem]{Remark}

\title[New proofs of certain finite filling results]{New proofs of certain finite filling results via Khovanov homology}

\author[Liam Watson]{Liam Watson}\thanks{Partially supported by an NSERC postdoctoral fellowship.}

\begin{document}

\maketitle

\begin{abstract}
We give a Khovanov homology proof that hyperbolic twist knots do not admit non-trivial Dehn surgeries with finite fundamental group. 
\end{abstract}



\section{Introduction}

Twists knots provide the simplest infinite family of hyperbolic knots. These arise by considering various twisted Whitehead doubles of the trivial knot, for example. Alternatively, and better suited to the purpose of this paper, let $K_t$ be the $(1,t+1,1)$-pretzel knot and consider the family $\{K_t\}$ where $t\ge 0$ (see Figure \ref{fig:twist-knots}). With this convention, $K_0$ is the left-handed trefoil knot and $K_1$ is the figure eight knot. For $t>0$ the knot $K_t$ is hyperbolic. 

\begin{figure}[ht!]
\begin{center}
\raisebox{0pt}{\includegraphics[scale=0.25]{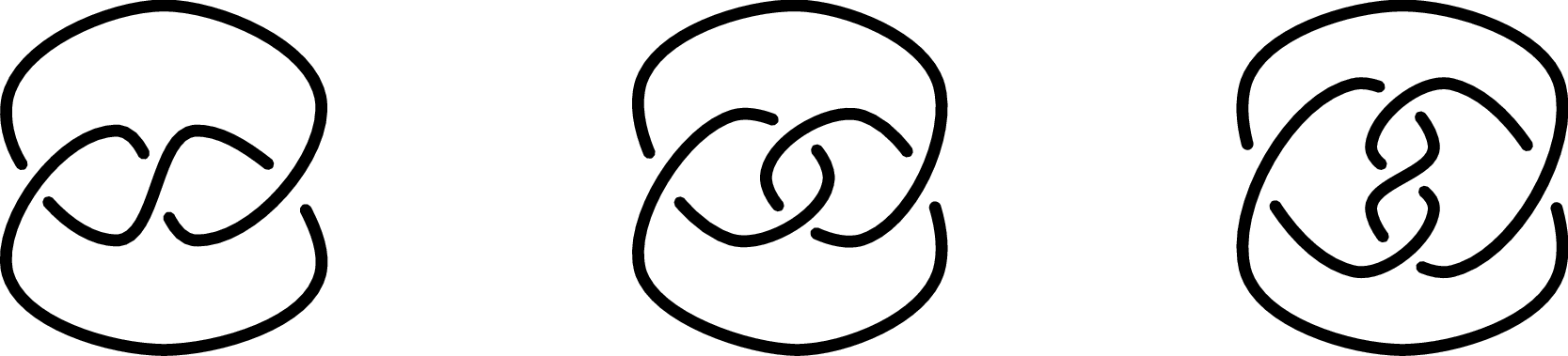}}\qquad\qquad
\labellist 
	\pinlabel $\vdots$ at 488 440
\endlabellist
\raisebox{-18pt}{\includegraphics[scale=0.25]{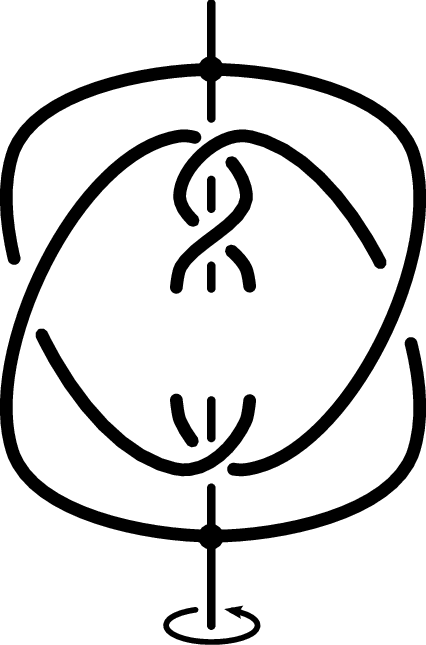}}\end{center}
\caption{The twist knots $K_0$, $K_1$ and $K_2$ (left), and a the general twist knot $K_t$ (right) with strong inversion indicated, where $t+1$ gives the number of vertical half twists.}
\label{fig:twist-knots}
\end{figure}

Given a knot $K$ in $S^3$, consider the three-manifold with torus boundary $M=S^3\smallsetminus\nu(K)$ where $\nu(K)$ is an open tubular neighbourhood of the knot. Define $S^3_r(K)=M\cup_h(D^2\times S^1)$ where $h\co \partial (D^2\times S^1) \to \partial M$ is the homeomorphism determined by $h(\partial D^2 \times \{{\rm point}\}) = p\mu+q\lambda$. In this notation, $\mu$ is the knot meridian, $\lambda$ is the Seifert longitude, and $r=\frac{p}{q}$ is an extended reduced rational number. The resulting closed three-manifold is the result of $r$-surgery on $K$; notice that the trivial surgery corresponds to the extended rational $\frac{1}{0}$. For background consistent with these conventions we refer the reader to Boyer \cite{Boyer2002}. 

A non-trivial surgery with finite fundamental group is called a finite filling. These fillings give a special type of exceptional surgery on a hyperbolic knot; a result of Thurston states that hyperbolic knots admit finitely many exceptional surgeries \cite{Thurston1980, Thurston1982}. The aim of this paper is to use Khovanov homology to prove the following:

\begin{theorem}[Delman \cite{Delman1995}, Tanguay \cite{Tanguay1996}]\label{thm:twist} Hyperbolic twist knots do not admit finite fillings.  \end{theorem}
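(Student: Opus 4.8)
The plan is to convert Dehn surgery on $K_t$ into a statement about branched double covers by exploiting the strong inversion recorded in Figure \ref{fig:twist-knots}, and then to show by an explicit computation in reduced Khovanov homology that the surgeries in question cannot be L-spaces. On the topology side the reduction is immediate: if $S^3_{\pq}(K_t)$ has finite fundamental group then it is a spherical space form, hence an L-space, so
\[
\rk_{\bF_2}\HFhat\bigl(S^3_{\pq}(K_t)\bigr)=\bigl|H_1\bigl(S^3_{\pq}(K_t);\bZ\bigr)\bigr|=|p|.
\]
Since $K_t$ has Seifert genus one, the genus bound for L-space surgeries forces any such slope to satisfy $|\pq|\ge 2g(K_t)-1=1$, and since finite surgery slopes on a hyperbolic knot have $|q|\le 2$, it is enough — after mirroring, which preserves hyperbolicity, finiteness of $\pi_1$, and the L-space condition — to treat the two families $\pq=p$ and $\pq=p/2$ with $p>0$ (one could instead run the argument below for every nontrivial slope).

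Next I apply the Montesinos trick. Quotienting by the strong inversion presents $S^3_{\pq}(K_t)$ as the double branched cover $\Si\bigl(\Br K_t(\pq)\bigr)$ of an explicit family of links $\Br K_t(\pq)\subset S^3$: there is a fixed tangle depending only on $K_t$ and its inversion, into which one inserts the rational tangle of slope $\pq$, with $\Br K_t(\overzero)$ the unknot (so that $\Si$ of it is $S^3$, a normalization check). I would identify these $\Br K_t(\pq)$ concretely — they are Montesinos links whose diagram stabilizes along each of the two families — and then compute $\rk_{\bF_2}\Khred\bigl(\Br K_t(\pq)\bigr)$ by iterating the reduced unoriented skein exact triangle at a crossing of the inserted rational tangle, which reduces the slope in the manner of the Euclidean algorithm and terminates at a short list of base cases (the figure eight, $t=1$, among them). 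The target of the computation is the pair of statements: (i) for every $t\ge 1$ and every nontrivial slope, $\rk_{\bF_2}\Khred\bigl(\Br K_t(\pq)\bigr)>|p|$; and (ii) the $\delta$-graded support of $\Khred\bigl(\Br K_t(\pq)\bigr)$ is spread over enough diagonals that no differential in the \os\ spectral sequence $\Khred\bigl(\Br K_t(\pq);\bF_2\bigr)\Rightarrow\HFhat\bigl(\Si(\Br K_t(\pq));\bF_2\bigr)$ can cut its rank down to $|p|$. Given (i) and (ii) one gets $\rk_{\bF_2}\HFhat\bigl(S^3_{\pq}(K_t)\bigr)>|p|$, so $S^3_{\pq}(K_t)$ is not an L-space and in particular does not have finite fundamental group, contradicting the first step.

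The skein-triangle recursion is the routine part, being bookkeeping with continued fractions once the links are in hand. The real difficulties are, first, pinning down the family $\Br K_t(\pq)$ correctly from the inversion — getting the ambient tangle and its rational fillings right and verifying the $\overzero$ normalization — and, second, the $\delta$-grading analysis behind (ii), which amounts to certifying that the ``extra'' reduced Khovanov homology of these thicker links genuinely survives to $\HFhat$ rather than cancelling, equivalently that the branched double covers are not L-spaces. I expect (ii) to be the crux; I would establish it either by tracking the $\delta$-gradings of the surviving generators through the exact triangle directly, or by invoking the classification of Montesinos links whose branched double covers are L-spaces to reduce ``not an L-space'' to ``not quasi-alternating,'' the latter being visible from the Khovanov homology already computed.
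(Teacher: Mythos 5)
Your overall strategy is the right one and is essentially the skeleton of the paper's argument: quotient by the strong inversion, realize $S^3_r(K_t)$ as $\Br(S^3,\tau_t(r))$, and obstruct finiteness of $\pi_1$ via the reduced Khovanov homology of the branch sets. The difference is that the paper imports the obstruction as a black box --- if $\Br(S^3,L)$ has finite fundamental group then the homological width satisfies $w(L)\le 2$ \cite[Theorem 6.3]{Watson2008} --- and then devotes all its effort to showing $w(\tau_t(r))\ge t+1$, whereas you propose to re-derive the obstruction by hand through rank counting in the \os\ spectral sequence. As written, your proposal has two genuine gaps, both at the points you yourself flag as the crux.

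First, step (ii) --- that the ``extra'' generators of $\Khred(\Br K_t(\pq))$ survive to $\HFhat$ --- is not established, and your proposed fallbacks do not close it. The inequality $\rk\Khred(L)>|p|=\det(L)$ from step (i) proves nothing on its own, since the spectral sequence differentials can cancel the excess; this is exactly why some structural input is needed. The branch sets here are closures of the three-braids $\si_1^{N+n}(\si_2\si_1^3\si_2)\Delta^t$, which are not Montesinos links in general, so a classification of Montesinos links with L-space branched covers is not applicable; and ``not quasi-alternating'' does not imply ``branched double cover is not an L-space,'' so that reduction is also invalid. The correct mechanism (the one behind the quoted width bound) is that every higher differential shifts the $\delta$-grading by exactly $+1$, so that support on three or more diagonals, combined with the constraints finiteness of $\pi_1$ places on $\HFhat$, yields the contradiction --- but this is precisely the content you would need to prove, not assume. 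Second, the computation itself is only a declared intention. Running the unoriented skein triangle down the continued fraction of every slope $\pq$, for every $t$, while tracking both ranks and $\delta$-gradings well enough to certify (ii), is a two-parameter infinite computation with no termination or stabilization argument supplied. The paper avoids this entirely: it computes (up to controlled indeterminate summands) the homology of just two integer fillings $\tau_t(\ell)$ and $\tau_t(\ell+1)$ per $t$, using an iterated mapping cone against the known homology of $T_{3,3t}$ together with Turner's mod-2 Lee-type spectral sequence to pin down the differentials, and then invokes the ``generic tangle'' stability results of \cite{Watson2008} to bound $w(\tau_t(r))$ for all rational $r$ at once. Without an analogue of that stabilization step, your induction over slopes does not get off the ground. (Your reduction to integer and half-integer slopes via Boyer--Zhang, and the genus-one/mirroring remarks, are fine but do little work: for $g=1$ the L-space genus bound excludes no positive slope.)
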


This result was first proved by Delman using essential laminations \cite{Delman1995}, and independently by Tanguay via character variety methods \cite{Tanguay1996}. A complete classification of exceptional surgeries on two-bridge knots was subsequently obtained by Brittenham and Wu \cite{BW2001}. Note also that Boyer, Mattman and Zhang give a complete description of the fundamental polygon of any twist knot, from which Tanguay's proof may be recovered \cite{BMZ1997}. 

An alternate proof of Theorem \ref{thm:twist} may be obtained via Heegaard Floer homology. Since twist knots are alternating, it follows that hyperbolic twist knots do not admit L-space surgeries \cite[Theorem 1.5]{OSz2005-lens}. As manifolds with elliptic geometry are L-spaces \cite[Proposition 2.3]{OSz2005-lens}, Theorem \ref{thm:twist} follows. This appeals to an equivalence between manifolds with finite fundamental group and manifolds admitting elliptic geometry, though in the present setting geometrization in full generality is not required; the presence of a strong inversion (as described below) ensures that geometrization for orbifolds of cyclic type is sufficient \cite{BP2001,Thurston1982}.  

A strong inversion is an involution of a three-manifold with one-dimensional fixed point set. For example, every two-fold branched cover of $S^3$ admits a strong inversion. A knot $K$ in $S^3$ is strongly invertible if the standard strong inversion on $S^3$ induces a strong inversion on the knot complement $M=S^3\smallsetminus\nu(K)$. That is, $M$ admits an involution with one-dimensional fixed point set meeting $\partial M$ transversally in exactly 4 points. The twist knot $K_t$  is strongly invertible for all $t$; the relevant symmetry is exhibited in Figure \ref{fig:twist-knots}.

Given a strongly invertible knot, the involution on the knot complement may be extended to a strong inversion on any surgery \cite{Montesinos1975}. This is referred to as the Montesinos trick, and as a result every surgery on a strongly invertible knot may be realized as a two-fold branched cover of $S^3$ branched over a link. In this setting, the work in \cite{Watson2008} establishes that Khovanov homology may be used to provide obstructions  to finite fillings. This makes use of the Khovanov homology of the branch sets associated with a surgery on the knot via Montesinos' work. For example, Khovanov homology easily recovers the fact that the figure eight knot does not admit finite fillings \cite[Theorem 7.2]{Watson2008} (this result is originally due to Thurston \cite{Thurston1980}). The key observation is that, to a certain degree, relatively simple two-fold branched covers (in terms of geometry) have branch sets with simple Khovanov homology. 

Using the strong inversion on $K_t$, the goal of this paper is to apply the results in \cite{Watson2008} to prove Theorem \ref{thm:twist}. The proof is entirely combinatorial and new, in the sense that it does not appeal to any of the machinery described above (Heegaard Floer homology, character varieties, essential laminations, etc.). However, the relationship between Heegaard Floer homology and Khovanov homology suggests that the particular obstructions from these two theories may be related. Part of the motivation for pursuing a proof via Khovanov homology stems from an interest in comparing the obstructions from Heegaard Floer homology and from Khovanov homology. Further motivation is in establishing methods for applying obstructions from Khovanov homology to infinite families. This poses the immediate challenge of pushing calculation techniques beyond the limits imposed by machine calculation, and in turn is a central focus of this paper.

\subsection{The proof of Theorem \ref{thm:twist}} The feature of Khovanov homology exploited in this proof is homological width. For a given link $L$ this is the integer $w(L)$ recording the number of diagonals supporting non-trivial homology (see Definition \ref{def:width}).  Let $\Br(S^3,L)$ denote the two-fold branched cover of $S^3$ branched over a link $L$.  The following is proved in \cite{Watson2008}:

\begin{theorem}[{\cite[Theorem 6.3]{Watson2008}}]\label{thm:width} If $\Br(S^3,L)$ has finite fundamental group then $w(L)\le 2$. \end{theorem}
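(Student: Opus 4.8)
The plan is to combine the known structure of the Heegaard Floer homology of manifolds with finite fundamental group with the spectral sequence relating the reduced Khovanov homology of a link $L$ to the Heegaard Floer homology of its double branched cover $\Br(S^3,L)$. First I would invoke the \os\ spectral sequence whose $E_2$-page is $\Khred(\close{L})$ (with $\bF_2$ coefficients) and which converges to $\HFhat(\Br(S^3,L))$; this already gives the rank bound $\rk \HFhat(\Br(S^3,L)) \le \rk \Khred(L)$. For the finer \emph{width} statement I would track the $\delta$-grading: reduced Khovanov homology is supported on $\delta = j - 2i$ diagonals, the differentials in the spectral sequence shift $\delta$ in a controlled way, and the homological width $w(L)$ counts exactly the number of occupied diagonals.

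The key steps, in order, are as follows. First, recall that if $\pi_1(\Br(S^3,L))$ is finite then $\Br(S^3,L)$ is an elliptic manifold (here using only geometrization for orbifolds of cyclic type, as the double branched cover carries the covering involution as a strong inversion), hence an L-space: $\rk \HFhat(\Br(S^3,L)) = |H_1(\Br(S^3,L);\bZ)| = \det(L)$. Second, observe that the $E_\infty$-page of the spectral sequence therefore has total rank exactly $\det(L)$, which also equals the rank of $E_2 = \Khred(L)$ evaluated at a single diagonal's worth of information — more precisely, one shows that an L-space double branched cover forces $\Khred(L)$ to be \emph{thin} after accounting for the surviving generators, so that all of $E_2$ is concentrated near a single $\delta$-diagonal up to the spread introduced by the differentials. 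Third, bound the number of diagonals: since the spectral sequence collapses onto a bigraded group of rank $\det(L)$ lying on essentially one diagonal, and since each page's differential can only connect diagonals a bounded distance apart, the original $E_2$-page — hence $\Khred(L)$, hence $\widetilde{\operatorname{Kh}}$ in the unreduced normalization used to define $w(L)$ — can occupy at most two adjacent diagonals. Converting back through Definition \ref{def:width} gives $w(L) \le 2$.

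I would organize the bookkeeping by fixing conventions so that the relevant spectral sequence differential $d_r$ on the $r$-th page changes the $\delta$-grading by a fixed amount (independent of $r$), which is what makes the ``bounded distance'' argument in the third step work; the reduced-to-unreduced comparison contributes the factor that turns ``one diagonal'' into ``at most two.'' The main obstacle I anticipate is precisely this last quantitative step: one must rule out the a priori possibility that $\Khred(L)$ is spread over three or more diagonals with the spectral sequence differentials conspiring to cancel everything off-diagonal. Handling this requires using not just the rank equality $\rk \HFhat = \det(L)$ but the full graded structure of $\HFhat$ of an L-space — namely that it is supported in a single $\bZ/2$-grading with the $\spinc$-decomposition matching $H_1$ — together with the fact that the spectral sequence is a module over the relevant Frobenius algebra, which pins down how generators on extreme diagonals must pair up. Once the extreme diagonals are forced to cancel in pairs against adjacent ones, a counting argument on $\det(L) = \rk E_\infty$ versus $\rk E_2$ closes the gap and yields $w(L) \le 2$.
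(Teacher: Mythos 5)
The paper itself contains no proof of Theorem \ref{thm:width}: it is imported verbatim from \cite[Theorem 6.3]{Watson2008}, and the only hint given here is the remark in the introduction that finite fundamental group implies elliptic geometry (requiring only the orbifold theorem, thanks to the covering involution) and that elliptic manifolds are L-spaces. Your first two steps reproduce exactly this reduction, and the framework you choose --- the \os\ spectral sequence from $\Khred$ of (the mirror of) $L$ to $\HFhat(\Br(S^3,L);\bF)$, together with $\rk\HFhat(\Br(S^3,L))=|H_1|=\det(L)$ for an L-space --- is the right one and is the route taken in the cited source. So the set-up is sound.

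The gap is in your third step, and it is the one you flag yourself. From ``$E_\infty$ has total rank $\det(L)$ and is supported in $\delta$-gradings of a single parity'' together with ``every higher differential shifts $\delta$ by a fixed odd amount'' one cannot conclude that the $E_2$-page occupies at most two diagonals: an $E_2$-page with ranks $(1,1,1)$ on three consecutive diagonals collapses, via a single rank-one differential between the first two, to rank $1$ concentrated on one diagonal, which is perfectly consistent with $\det(L)=|1-1+1|=1=\rk\HFhat$. A spectral sequence bounds the rank of $E_\infty$ by that of $E_2$, not conversely, so no amount of rank or Euler-characteristic bookkeeping on $E_\infty$ can force a wide $E_2$-page to be narrow. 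Your proposed patches (the Frobenius-algebra module structure, the $\spinc$-decomposition pinning down how extreme diagonals pair) are not developed into an argument, and note that nothing in your proof uses finiteness of $\pi_1$ beyond the L-space property: as written you would be proving the strictly stronger assertion ``$\Br(S^3,L)$ an L-space implies $w(L)\le 2$,'' which is not what the cited theorem asserts and is not established by these means. The hypothesis that $\pi_1$ is finite must enter in some essential way beyond certifying the L-space condition --- for instance through the geometry of the quotient orbifold $(S^3,L)$ and the resulting restrictions on which links $L$ can occur as branch sets --- and your proposal does not supply that input.
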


With this result in hand the aim of this paper is to establish:

\begin{theorem}\label{thm:combo} The surgery $S^3_r(K_t)$ may be realized as the two-fold branched cover $\Br(S^3,\tau_t(r))$, where the branch set $\tau_t(r)$ satisfies $w(\tau_t(r))\ge t+1$. \end{theorem}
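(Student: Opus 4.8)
The first task is to make the branch set $\tau_t(r)$ explicit. Since $K_t$ is the $(1,t+1,1)$-pretzel knot with the strong inversion drawn in Figure~\ref{fig:twist-knots}, I would quotient the knot complement by the involution and track the image of the branching data under Montesinos' trick: the quotient orbifold of $S^3$ by a strong inversion is again $S^3$, and $S^3_r(K_t)$ becomes the two-fold branched cover over a link $\tau_t(r)$ obtained by inserting a rational tangle (governed by $r=\pq$) into the tangle quotient of $K_t$. Concretely I expect $\tau_t(r)$ to be a Montesinos-type link whose tangle decomposition has one ``variable'' tangle recording the surgery coefficient and a fixed block recording the $t+1$ vertical half-twists of $K_t$; drawing this carefully and identifying it as a known family (e.g.\ a double-twist or pretzel link, or a closed $3$-braid) is step one.

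**Main computation.** With a diagram for $\tau_t(r)$ in hand, the heart of the argument is the lower bound $w(\tau_t(r))\ge t+1$ on homological width. I would compute (reduced) Khovanov homology of $\tau_t(r)$ by resolving the crossings in the block of $t+1$ half-twists and setting up the long exact skein sequence relating $\tau_t(r)$, $\tau_{t-1}(r)$, and a link with one fewer twist region, thereby obtaining an inductive handle on the homology. The plan is to show by induction on $t$ that the (unreduced) Khovanov homology occupies at least $t+1$ distinct $\delta$-gradings (diagonals), the base cases $t=0,1$ being the trefoil- and figure-eight-branch-set computations already recorded in \cite{Watson2008}. The key structural input is that adding two half-twists to an alternating-type tangle shifts part of the homology into a genuinely new diagonal rather than collapsing it back — i.e.\ the spectral sequence / skein exact triangle does not allow cancellation across all the relevant diagonals. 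To control this I would keep track of gradings precisely through the exact triangle (using the known grading shifts in the unoriented skein sequence) and, where possible, reduce to a subquotient complex that is itself thin or whose homology is understood, so that the new diagonal survives.

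**The obstacle.** The hard part will be the inductive step: exact-sequence arguments naturally give upper bounds on homology but only give lower bounds on width when one can certify that no differential in the associated spectral sequence kills the generators living on the extreme diagonals. I anticipate needing an auxiliary invariant or a direct partial computation — for instance, pinning down the homology in a single well-chosen quantum grading (a ``corner'' of the support), or exhibiting explicit cycles on the outermost diagonals that cannot be boundaries for degree reasons — to force the width bound. A secondary difficulty is purely bookkeeping: the branch set $\tau_t(r)$ depends on $r$, and one must check the width bound holds uniformly for \emph{every} finite-filling slope $r$, so the computation should be organized so that the $t+1$ surviving diagonals come from the fixed twist block and are insensitive to the variable rational tangle. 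Once $w(\tau_t(r))\ge t+1$ is established, Theorem~\ref{thm:combo} combined with Theorem~\ref{thm:width} immediately yields Theorem~\ref{thm:twist} for $t\ge 2$, with $t=0,1$ being the (non-hyperbolic or already-known) trefoil and figure-eight cases.
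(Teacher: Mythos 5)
Your overall strategy is the paper's: apply the Montesinos trick to the strong inversion on $K_t$, realize $\tau_t(r)$ as (a rational closure of) an explicit quotient tangle --- in fact as the closure of the three-braid $\si_1^{N+n}(\si_2\si_1^3\si_2)\Delta^t$ with $\Delta=(\si_2\si_1)^3$ --- and then induct over the twist region using the unoriented skein exact sequence. But the two places you flag as ``obstacles'' are precisely where the content of the proof lives, and your proposal does not supply either missing ingredient. First, the mechanism that certifies survival of generators on the extreme diagonals is not a to-be-found ``auxiliary invariant'': it is Turner's reduced analogue of Lee's spectral sequence. The $E_\infty$ page of that deformation has total rank $2^{|L|-1}$, with the surviving generators pinned to specific values of $\delta+q$ by linking numbers of the components; combined with the fact that the iterated mapping cone differentials raise $\delta$ by $1$ and preserve $q$, this forces almost every potential differential to vanish or to be an isomorphism, and is what drives both the computation of $\Khred(T_{3,q})$ (up to a rank-indeterminate summand that does not affect width) and the subsequent computation of $\Khred(\tau_t(\ell))$. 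Without this input, the skein exact sequence alone gives only upper bounds, as you yourself note.

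Second, your plan to handle all rational $r$ by making the $t+1$ diagonals ``insensitive to the variable rational tangle'' is not how the argument can be closed directly: the rational tangle insertion for $r=[a_1,\dots,a_k]$ changes the diagram substantially, and a fresh induction for each continued-fraction shape is not tractable. The paper instead computes only \emph{two} integer fillings --- $\tau_t(\ell)$ with width exactly $t+1$ and $\tau_t(\ell+1)$ with width $t+2$, where $\ell=-1$ or $-5$ according to the parity of $t$ --- and then invokes the stability and genericity machinery of \cite{Watson2008} (the monotonicity of width in the framing parameter and Theorem 5.16 there) to conclude $w(\tau_t(r))\ge t+1$ for every $r\in\bQ$. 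If you do not want to import that machinery, you would need to prove an analogous stability statement yourself; locating the unique integer framing at which the width jumps, and verifying the ``expansion'' conditions on the quantum gradings of the new diagonal (as in Proposition \ref{prp:ell-plus-one}), is unavoidable. A final bookkeeping caution: determining the correct Seifert framing (the exponent $N$, which depends on the parity of $t$) is needed before any specific integer closure can be identified with a specific surgery coefficient; your outline passes over this, but the width-jump location depends on it.
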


When $t>1$, Theorem \ref{thm:twist} follows immediately from Theorem \ref{thm:width} and Theorem \ref{thm:combo}. The case $t=1$ corresponds to surgery on the figure eight knot and we appeal to the amphichirality of $K_1$ combined with the fact that the width bound of Theorem \ref{thm:width} for finite fillings is violated for non-negative surgery coefficients (compare \cite[Theorem 7.2]{Watson2008}).

The remainder of the paper is devoted to establishing Theorem \ref{thm:combo}, and is organized as follows. The proof has essentially two parts: first describe the exterior of each $K_t$ as the two-fold branched cover of a tangle (see Proposition \ref{prp:framed-tangle}), and second determine the coarse behaviour of the Khovanov homology (precisely, the homological width) of various rational closures of these tangles (see Proposition \ref{prp:branch-width}). The former is established in Section \ref{sec:branch} and is a straightforward application of the Montesinos trick, while calculations pertaining to the latter occupy Section \ref{sec:width}. 

A key observation is that relatively few integer surgeries need to be considered in order to infer the homological width for the infinite family of branch sets associated with each knot $K_t$. This exploits some particularly stable behaviour of the families of links that arise, as developed in \cite{Watson2008}. The sense in which the homological width might be considered coarse information is elucidated through these calculations. Indeed, we will only calculate the Khovanov homology for certain branch sets up to indeterminate summands (see Remark \ref{rmk:tetris}), thus the homological width is obtained without a full description of the Khovanov homology. Since Khovanov homology can be difficult to compute for knots with a high number of crossings, this suggests that it may be possible to calculate (or bound) the homological width in certain settings without needing to calculate the entire invariant. 

For the reader's reference, we have collected the requisite properties of reduced Khovanov homology with coefficients in $\bZ/2\bZ$ in Section \ref{sec:Kh}. This section constitutes the bulk of the paper, and may be skipped at first reading by those already familiar with the invariant.

\subsection*{Acknowledgements} Much of the work associated with this paper was completed while in residence at MSRI in 2010. The author thanks the organizers of the program {\em Homology Theories of Knots and Links} for providing a stimulating work environment. Thanks also to Tye Lidman for helpful discussions pertaining to spectral sequences, Paul Turner for comments on an earlier draft of this paper, and Jeremy Van Horn-Morris for valuable comments regarding open book decompositions. Finally, the detailed comments provided by the referees improved the exposition of the paper throughout. 

\section{Branch sets} \label{sec:branch}

Given a manifold obtained by Dehn surgery on a strongly invertible knot, the Montesinos trick provides an alternate description of the manifold as a two-fold branched cover of $S^3$ \cite{Montesinos1975}. To make this precise we review the notation introduced in \cite[Section 3]{Watson2008}.  When $K$ is strongly invertible, the knot exterior $S^3\smallsetminus\nu(K)$ is the two-fold branched cover of a tangle $T=(B^3,\tau)$ where $B^3$ is a three-ball and $\tau$ is a pair of properly embedded arcs. Denote this two-fold branched cover by  $\Br(B^3,\tau)$. The arcs $\tau$ are the image of the fixed point set in the quotient of the involution on the exterior $S^3\smallsetminus\nu(K)$. As a result, the tangle $T$ is well-defined up to homeomorphism of the pair $(B^3,\tau)$, and any diagram of $T$ may be regarded as a choice of representative for the homeomorphism class. 

The boundary of the knot exterior has a preferred generating set for homology given by the knot meridian $\mu$ and the Seifert longitude $\lambda$. It is always possible to make a choice of corresponding preferred representative for the homeomorphism class of the pair $(B^3,\tau)$ as illustrated in Figure \ref{fig:closures} (see \cite[Corollary 3.8]{Watson2008}, for example). That is, $S^3\cong \Br(S^3,\tau(\overzero))$ and $S^3_0(K)\cong \Br(S^3,\tau(0))$ provide explicit descriptions of the branch sets corresponding to the trivial and zero surgeries, respectively. This may be thought of as fixing a framing on the tangle $(B^3,\tau)$, just as $\lambda$ corresponds to the Seifert framing of the knot. Throughout this paper we will use $(B_3,\tau)$ to denote the preferred representative of the associated quotient tangle of a given strongly invertible knot. 

\begin{figure}[ht!]
\begin{center}
\labellist \small
	\pinlabel $\mu$ at -80 278
	\pinlabel $\lambda$ at -150 210
	\pinlabel $\tau(\overzero)$ at 280 210
	\pinlabel $\tau(0)$ at 620 210
\endlabellist
\raisebox{0pt}{\includegraphics[scale=0.25]{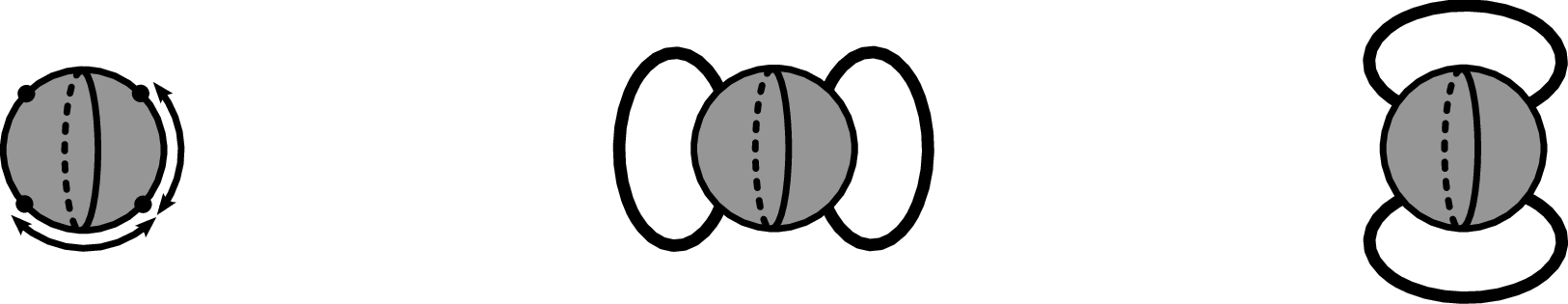}}
\end{center}
\caption{The arcs $\mu$ and $\lambda$ in the boundary of a tangle (left), labelled -- abusing notation -- by their respective lifts in the two-fold branched cover; and the closures $\tau(\overzero)$ and $\tau(0)$ corresponding to the trivial and zero surgeries in the cover, respectively.}
\label{fig:closures}
\end{figure}

More generally, branch sets for the integer surgeries are given by the links $\tau(n)$ as in Figure \ref{fig:fraction} so that $S^3_n(K)\cong\Br(S^3,\tau(n))$. In particular, notice that the half twist in the branch set lifts to a full Dehn twist in the cover, so that $\tau(n)$ is the branch set for the manifold obtained by   filling along the slope $n\mu+\lambda$ in the boundary of the knot exterior. It follows that $\det(\tau(n))=|n|$ (recall that $\det(L)=|H_1(\Br(S^3,L);\bZ)|$ whenever $H_1(\Br(S^3,L);\bZ)$ is finite, and zero otherwise). It is also possible to describe $\pq$-surgery on a strongly invertible knot (having fixed a continued fraction expansion of the surgery coefficient) so that $S^3_{p/q}(K)\cong\Br(S^3,\tau(\pq))$; see \cite[Section 3]{Watson2008} for details.

\begin{figure}[ht!]
\begin{center}
\labellist \small 
	\pinlabel $\cdots$ at 382 464
	\pinlabel $\underbrace{\phantom{aaaaaaaaaaa}}_n$ at 325 395
\endlabellist	
\raisebox{0pt}{\includegraphics[scale=0.25]{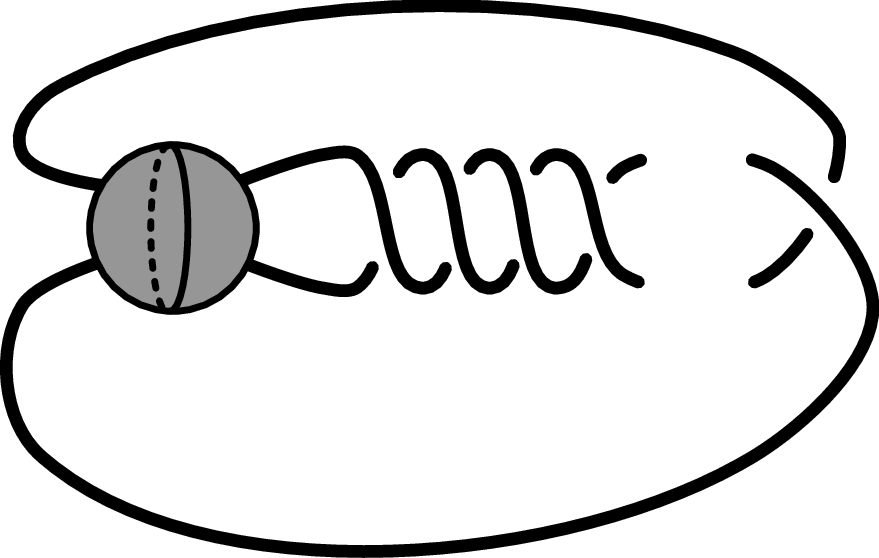}}\qquad\quad
\labellist \small 
	\pinlabel $\underbrace{\phantom{aaa}}_1\underbrace{\phantom{iaaaaaaaaa}}_3\underbrace{\phantom{aaaaaaaa}}_3$ at 351 380
\endlabellist
\raisebox{18pt}{\includegraphics[scale=0.25]{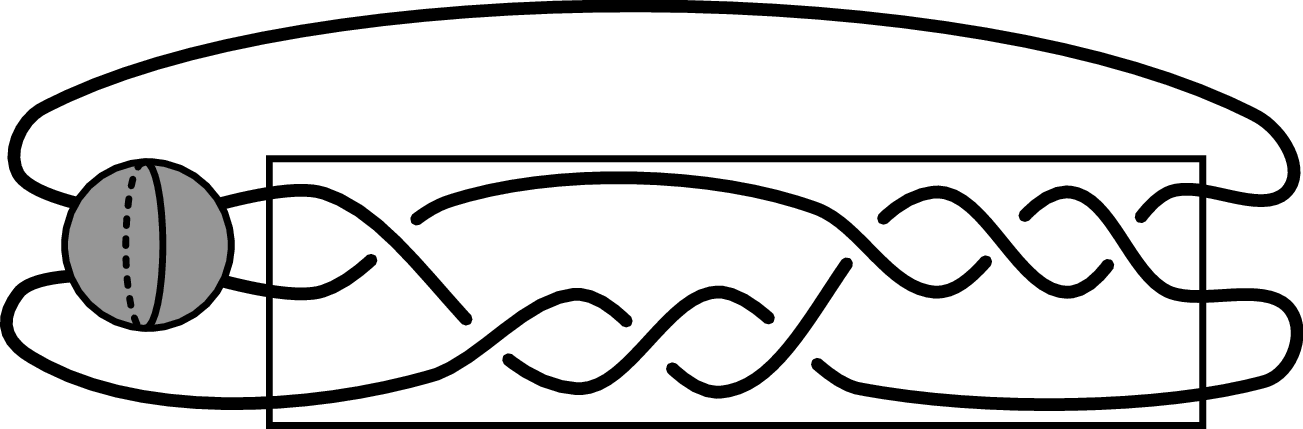}}
\end{center}
\caption{The closure $\tau(n)$ (left) giving rise to the branch set for integer surgeries (that is, Dehn fillings along slopes $n\mu+\lambda$), and the closure $\frac{13}{10}=[1,3,3]$ (right) corresponding to $13\mu+10\lambda$ Dehn filling, or $\frac{13}{10}$-surgery, in the cover.}
\label{fig:fraction}
\end{figure}

The goal of this section is to determine the tangle associated with the strong inversion on the twist knot $K_t$ illustrated in Figure \ref{fig:twist-knots}; this will establish the first part of Theorem \ref{thm:combo}. As a first approximation, we determine the tangle associated with the quotient of $S^3\smallsetminus\nu(K)$, without keeping track of the preferred representative (as described above) corresponding to the pair $\{\mu,\lambda\}$. This quotient is described in Figure \ref{fig:quotient} and is simplified in Figure \ref{fig:Kt-tangle}.

\begin{figure}[ht!]
\begin{center}
\labellist 
	\pinlabel $\vdots$ at 193 293
	\pinlabel $\vdots$ at 773 285
	\pinlabel \rotatebox{90}{$\underbrace{\phantom{aaaaaaaaaaaaaaaaaaaa}}$} at 865 340
	\pinlabel \scriptsize $t+1$ at 925 340
	\endlabellist
\raisebox{12pt}{\includegraphics[scale=0.25]{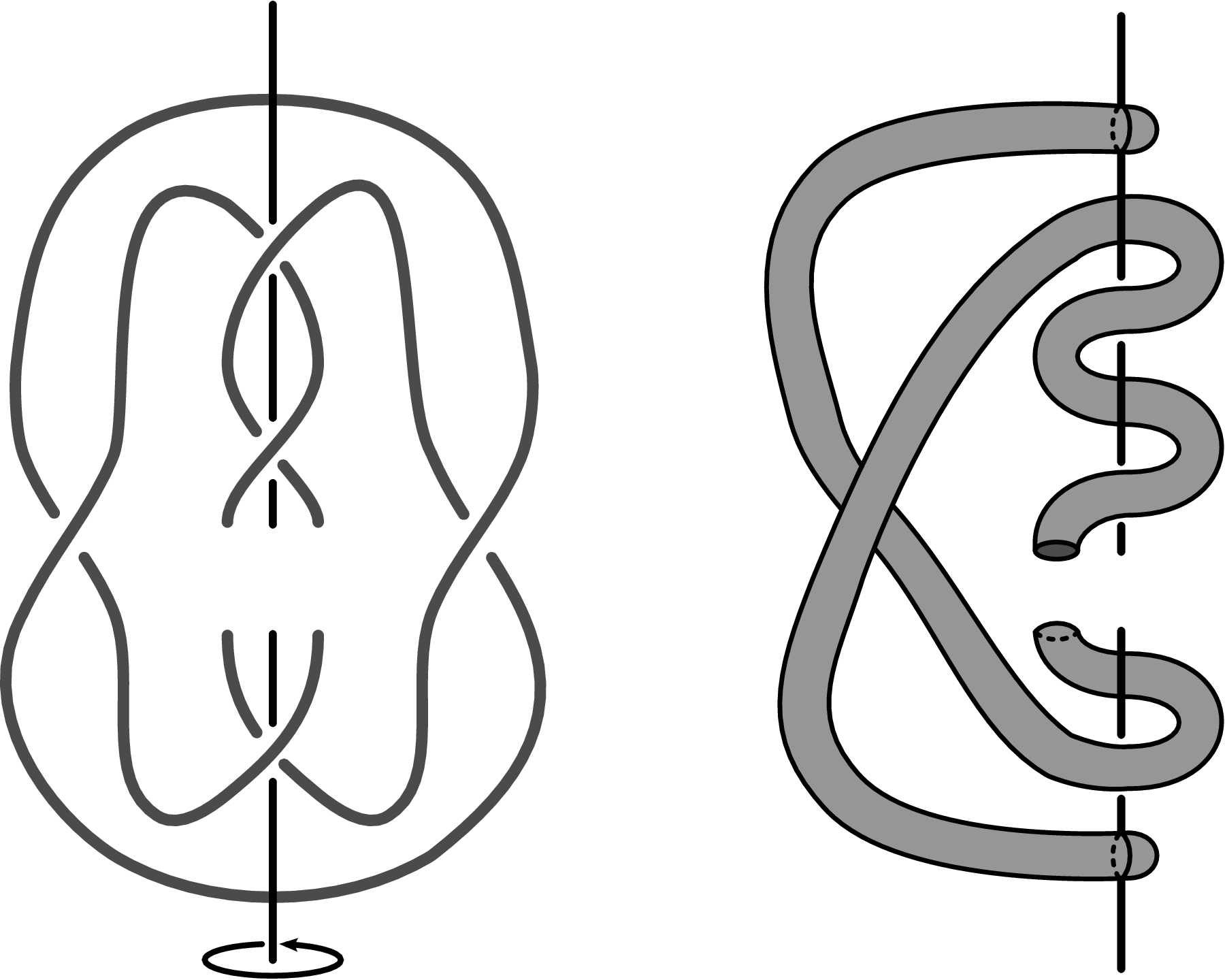}}
\end{center}
\caption{The quotient of the exterior of a strongly invertible knot determines a tangle: On the left the knot $K_t$ is shown (with axis of symmetry passing through a point at infinity), and on the right is the fundamental domain of the involution on the knot exterior. Note that the latter is always homeomorphic to a ball, and the image of the fixed point set descends to a properly embedded pair of arcs.}
\label{fig:quotient}\end{figure}

\begin{figure}[ht!]
\begin{center}
\labellist 
	\pinlabel $\vdots$ at 308 458
	\pinlabel \rotatebox{-90}{$\underbrace{\phantom{aaaaaaaaaaaaaa}}$} at 170 485
	\pinlabel \scriptsize $t+1$ at 110 485
	\tiny
	\pinlabel $a$ at 368 624
	\pinlabel $b$ at 368 565
	\pinlabel $c$ at 368 355
	\pinlabel $d$ at 368 300
\endlabellist
\qquad
\raisebox{0pt}{\includegraphics[scale=0.25]{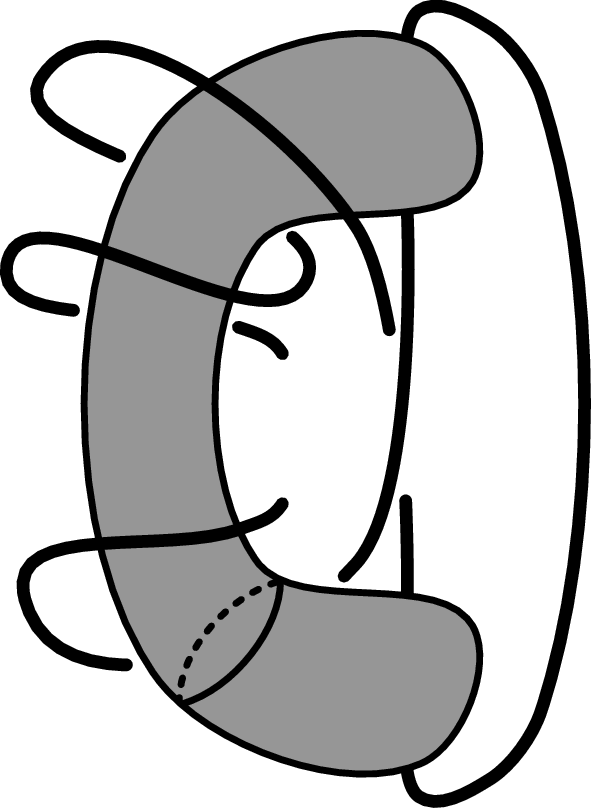}}\qquad\qquad
\labellist 
	\pinlabel $\vdots$ at 273 511
	\pinlabel \rotatebox{-90}{$\underbrace{\phantom{aaaaaaaa}}$} at 148 500
	\pinlabel \scriptsize $t-1$ at 89 500
	\tiny
	\pinlabel $a$ at 355 615
	\pinlabel $b$ at 328 590
	\pinlabel $c$ at 328 385
	\pinlabel $d$ at 355 374
\endlabellist
\raisebox{0pt}{\includegraphics[scale=0.25]{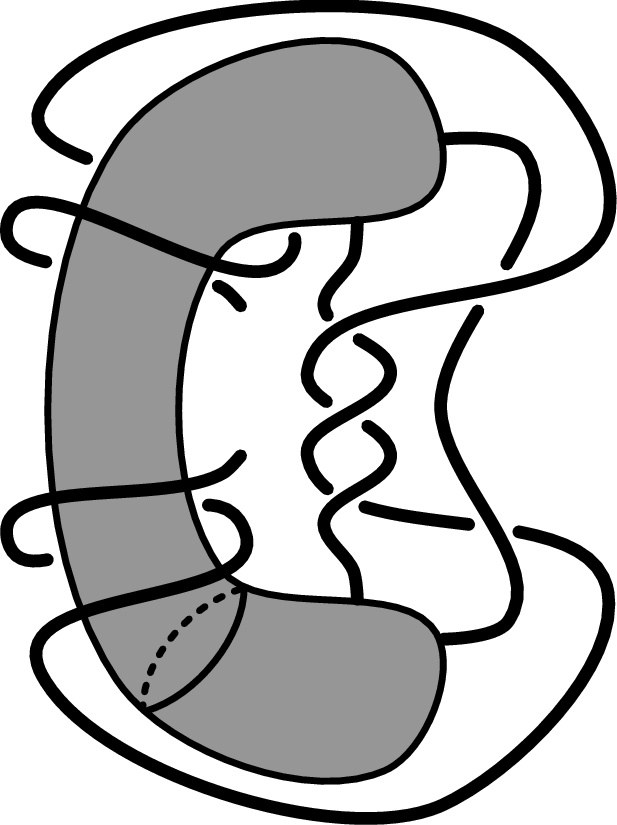}}\qquad\quad
\labellist \small 
	\pinlabel $\cdots$ at 347 411
	\pinlabel $\underbrace{\phantom{aaaaaaa}}_t$ at 342 347
	\tiny
	\pinlabel $a$ at 120 540
	\pinlabel $b$ at 120 323
	\pinlabel $c$ at 455 323
	\pinlabel $d$ at 455 540
\endlabellist	
\raisebox{12pt}{\includegraphics[scale=0.25]{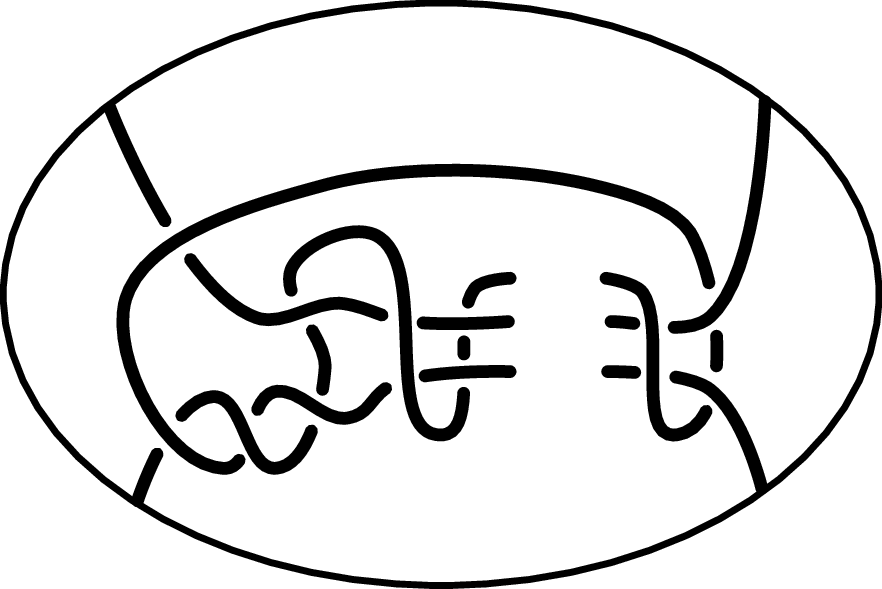}}
\end{center}
\caption{Simplifying the tangle resulting from the quotient of $K_t$ by the strong inversion. Note that the tangle diagram on the right yields the trivial knot when the endpoints labelled  $a$ and $b$ are identified and the endpoints labelled  $c$ and $d$ are identified.}
\label{fig:Kt-tangle}\end{figure}

The closure of the tangle diagram in Figure \ref{fig:Kt-tangle} that gives the trivial knot (by joining $a$ to $b$ and $c$ to $d$) identifies the trivial surgery in the cover, and therefore the preferred representative $(B^3,\tau_t)$ for this tangle is obtained by adding some collection of horizontal twists. That is, the tangle diagram in Figure \ref{fig:Kt-tangle} describes a representative for the associated quotient tangle that is compatible with some integer framing, but not necessarily the Seifert framing.  In particular, the right hand half twist between the strands meeting $c$ and $d$ lifts to a full Dehn twist  along the knot meridian $\mu$ in the two-fold branched cover. 

 Recall that $\langle\si_1,\si_2 |\si_1\si_2\si_1=\si_2\si_1\si_2\rangle$ is a presentation for the three-strand braid group, where braid words are read from left to right. Our convention for the generators is given in Figure \ref{fig:braid-generators}. Figure \ref{fig:Kt-tangle} suggests that the desired collection of branch sets $\tau_t(n)$ may be represented by closures of three-strand braids (see Figure \ref{fig:twist-quotient-braid}). From this observation the relevant family of braids to consider is given by $\si_2\si_1^3\si_2(\si_2\si_1^2\si_2)^t\si_1^N$, and determining the preferred representative amounts to establishing the appropriate framing, controlled by the integer $N$.

\begin{figure}[ht!]
\begin{center}
\labellist 
	\pinlabel $\si_2=$ at 375 463
	\pinlabel $\si_1=$ at 95 463
\endlabellist
\raisebox{0pt}{\includegraphics[scale=0.4]{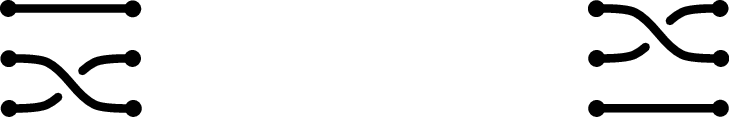}}
\end{center}
\caption{Standard generators for the three-strand braid group.}
\label{fig:braid-generators}
\end{figure}

\begin{figure}[ht!]
\begin{center}
\labellist \small 
	\pinlabel $\cdots$ at 216 142
	\pinlabel $\underbrace{\phantom{aaaaaaa}}_t$ at 210 82
\endlabellist
\raisebox{-25pt}{\includegraphics[scale=0.25]{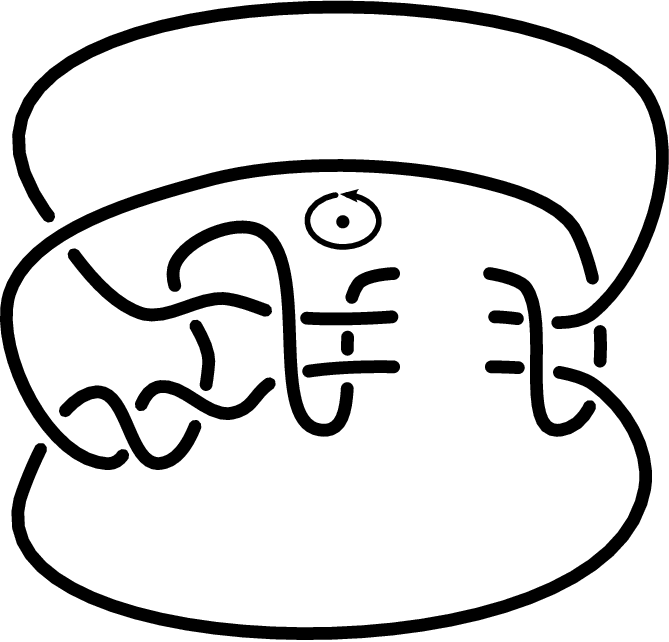}}
\qquad\qquad
\labellist 
	\pinlabel $\cdots$ at 400 415
	\pinlabel $\underbrace{\phantom{aaaaaaaaaaaaaaaaaaa}}_t$ at 397 372
\endlabellist
\raisebox{0pt}{\includegraphics[scale=0.4]{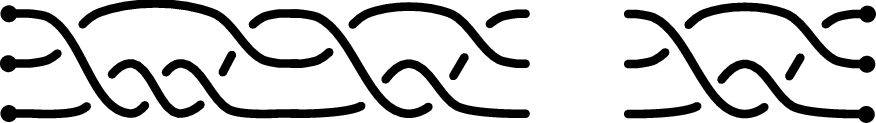}}
\end{center}
\caption{A closure of the quotient tangle associated with some (yet to be determined) integer framed surgery on $K_t$ on the left. With the braid axis indicated, this gives rise to the positive braid associated with the quotient of the knot $K_t$ on the right. Changes in framing in the two-fold branched cover (that is, Dehn twists along the meridian) are realized by the generator $\si_1$ in the base.}
\label{fig:twist-quotient-braid}
\end{figure}

We claim that the branch  set $\tau_t(n)$ is the closure of the braid \[\si_2\si_1^3\si_2(\si_2\si_1^2\si_2)^t\si_1^{N+n}\] where \[N=\begin{cases} 2+2t & {\rm for\ } t {\rm \ odd} \\ 6+2t & {\rm for\ } t {\rm \ even}\end{cases}\] so that $S^3_n(K_t)\cong \Br(S^3,\tau_t(n))$. Note that the non-negative integer $t$ determines the twist knot, while the integer $n$ is the surgery coefficient.

To see that this is the correct framing, it suffices to verify that $\det(\tau_t(0))=0$, since $|H_1(S^3_n(K_t);\bZ)|=|n|$ and $\si_1$ lifts to the positive Dehn twist about the meridian. We leave this verification to the reader (for the moment), but note that calculations in Section \ref{sec:width} will provide a proof (see Remark \ref{rmk:det}).

The braid relation establishes that $\si_2\si_1^2\si_2\si_1^2$ and $\Delta=(\si_2\si_1)^3$ are equivalent, the latter representing the full twist on three strands. As this element is central in the the braid group,  have now shown: 

\begin{proposition}\label{prp:framed-tangle}
The branch set $\tau_t(n)$ is given by the closure of the braid \[\beta_{t,n}=\begin{cases} \si_1^{2+n}(\si_2\si_1^3\si_2)\Delta^t & {\rm for\ } t {\rm \ odd} \\ \si_1^{6+n}(\si_2\si_1^3\si_2)\Delta^t & {\rm for\ } t {\rm \ even}\end{cases}\] where $\Delta=(\si_2\si_1)^3$.
\end{proposition}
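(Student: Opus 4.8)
The plan is to take as given the claim recorded immediately above the proposition — that $\tau_t(n)$ is the closure of the three-strand braid $\si_2\si_1^3\si_2(\si_2\si_1^2\si_2)^t\si_1^{N+n}$, with $N$ as specified — and to simplify this word to $\beta_{t,n}$ using only the braid relation $\si_1\si_2\si_1=\si_2\si_1\si_2$, the centrality of $\Delta=(\si_2\si_1)^3$ in the three-strand braid group, and the elementary fact that conjugate braids have isotopic closures (the Markov conjugation move).

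The one computation is the identity $\si_2\si_1^2\si_2\si_1^2=\Delta$: cancelling $\si_2\si_1$ from the left and a $\si_1$ from the right of $\si_2\si_1^2\si_2\si_1^2=(\si_2\si_1)^3$ reduces it to the braid relation. Rewriting gives $\si_2\si_1^2\si_2=\Delta\si_1^{-2}$, and since $\Delta$ is central, $(\si_2\si_1^2\si_2)^t=\Delta^t\si_1^{-2t}$. Substituting this and again using centrality,
\[
\si_2\si_1^3\si_2(\si_2\si_1^2\si_2)^t\si_1^{N+n}=\si_2\si_1^3\si_2\,\Delta^t\,\si_1^{N+n-2t}=\Delta^t\,(\si_2\si_1^3\si_2)\,\si_1^{N+n-2t}.
\]
Conjugating to bring the rightmost block $\si_1^{N+n-2t}$ to the front (and commuting $\Delta^t$ past it) shows this braid has the same closure as $\si_1^{N+n-2t}(\si_2\si_1^3\si_2)\Delta^t$. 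Finally, $N-2t$ equals $2$ for $t$ odd and $6$ for $t$ even, which is precisely the parity split in the statement, so this last braid is $\beta_{t,n}$ and $S^3_n(K_t)\cong\Br(S^3,\tau_t(n))$.

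I do not expect the braid algebra to be the obstacle — it is the few lines above, essentially the remark that $\si_2\si_1^2\si_2\si_1^2$ is the central full twist on three strands. The actual work sits one level down, in the claim being invoked: extracting the braid word $\si_2\si_1^3\si_2(\si_2\si_1^2\si_2)^t\si_1^{N+n}$ from the chain of tangle isotopies in Figures \ref{fig:quotient}--\ref{fig:twist-quotient-braid} (an application of the Montesinos trick), and pinning down the framing constant $N$ via the normalization $\det(\tau_t(0))=0$, i.e. $|H_1(S^3_0(K_t);\bZ)|=0$ — the point the excerpt explicitly defers to the determinant computations of Section \ref{sec:width} (Remark \ref{rmk:det}). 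Granting those two inputs, the proposition reduces to the braid manipulation sketched here.
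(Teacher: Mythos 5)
Your proposal is correct and matches the paper's own argument essentially verbatim: the paper likewise takes the braid word $\si_2\si_1^3\si_2(\si_2\si_1^2\si_2)^t\si_1^{N+n}$ (with $N$ pinned down by $\det(\tau_t(0))=0$, deferred to Remark \ref{rmk:det}) as the input, and derives the proposition from the identity $\si_2\si_1^2\si_2\si_1^2=\Delta$ together with centrality of $\Delta$. Your explicit mention of the Markov conjugation step needed to move $\si_1^{N+n-2t}$ to the front is a small point the paper leaves implicit, but it is the same proof.
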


\begin{remark}
It is possible to find the preferred representative directly (as in Bleiler \cite{Bleiler1985}, for example). This amounts to (carefully) carrying the image of the longitude through the quotient shown in Figure \ref{fig:quotient} and the subsequent tangle isotopies of Figure \ref{fig:Kt-tangle}. In the present setting, the key observation is that a full twist in the cover (taking $K_t$ to $K_{t+2}$) corresponds to the addition of $\Delta^2$ in the base. One checks that the addition of the full twist $\Delta^2$ in the branch set preserves the Seifert framing in the cover, and as a result it is possible to induct in $t$ (in fact, two separate inductions for $t$ odd and $t$ even) to determine that the preferred representative $(B^3,\tau_t)$. The base case $t=0$ may be extracted from \cite[Section 3.2]{Watson2008} together with the Seifert fibre structure on the trefoil exterior, while the base case $t=1$ may be extracted from \cite[Section 7.1]{Watson2008}.
\end{remark}

This calculation sets up our second task towards the proof of Theorem \ref{thm:twist}: determine the reduced Khovanov homology for closures of the family of three-braids $\beta_{t,n}$. To conclude this discussion, Jeremy Van Horn-Morris points out that realizing the branch set as the closure of a three-strand braid also establishes the following:
\begin{corollary}The result of integer Dehn surgery on $K_t$ admits a genus one open book decomposition, for each $t\ge0$. Moreover, for positive integer Dehn surgeries the corresponding open book decomposition has positive monodromy.   \end{corollary}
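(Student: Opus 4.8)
The plan is to read the open book off directly from the braid presentation of the branch set supplied by Proposition~\ref{prp:framed-tangle}, via the standard dictionary between closed braids and open book decompositions of double branched covers. Recall the general mechanism. If $\hat\beta\subset S^3$ is the closure of a $k$-strand braid $\beta$ and $A$ is the braid axis, then $S^3\smallsetminus\nu(A)$ is a solid torus fibring over $S^1$ with fibre a disk $D^2$, and $\hat\beta$ meets each fibre transversally in the $k$ braid points with first return map~$\beta$. Since $A$ is disjoint from $\hat\beta$, it lifts to a link $\widetilde A$ in $\Br(S^3,\hat\beta)$, and pulling back the fibration exhibits $\Br(S^3,\hat\beta)\smallsetminus\nu(\widetilde A)$ as fibred over $S^1$ with fibre the double cover of $D^2$ branched over the $k$ braid points and monodromy the lift $\widetilde\beta$ of~$\beta$. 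This is exactly an open book decomposition of $\Br(S^3,\hat\beta)$ with binding $\widetilde A$ and page $\Br(D^2,k\text{ points})$.

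Specialise to $k=3$. By Riemann--Hurwitz, $\chi\big(\Br(D^2,3\text{ points})\big)=2\cdot1-3=-1$; and since the total monodromy around the three branch points is a product of three copies of the nontrivial element of the symmetric group on two letters, hence nontrivial, the preimage of $\partial D^2$ is a single circle. An orientable surface with one boundary component and Euler characteristic $-1$ is a once-punctured torus, so $\Br(D^2,3\text{ points})\cong\Sigma_{1,1}$. Together with Proposition~\ref{prp:framed-tangle} and $S^3_n(K_t)\cong\Br(S^3,\tau_t(n))$, this already produces a genus one open book on every integer surgery along $K_t$. For the monodromy assertion, invoke the Birman--Hilden correspondence: $\Sigma_{1,1}$ carries a hyperelliptic involution realising it as the double cover of $(D^2,3\text{ points})$, and the resulting isomorphism from the three-strand braid group to the mapping class group of $\Sigma_{1,1}$ carries each positive half-twist $\si_i$ to a right-handed Dehn twist about the curve covering an arc joining the two interchanged branch points (these two curves meet once, matching the braid relation). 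Hence $\widetilde\beta_{t,n}$ is a product of right-handed Dehn twists whenever $\beta_{t,n}$ is presented by a positive word in $\si_1,\si_2$. From Proposition~\ref{prp:framed-tangle}, $\beta_{t,n}=\si_1^{c+n}(\si_2\si_1^3\si_2)\Delta^t$ with $c\in\{2,6\}$ and $\Delta=(\si_2\si_1)^3$, which is manifestly positive once $c+n\ge 0$, in particular for every positive surgery coefficient $n$. This gives the claimed positive monodromy.

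The only step demanding genuine care is the geometric input for $k=3$: that the double cover of the disk branched over three points is $\Sigma_{1,1}$, that the lift of a single braid generator is a single Dehn twist rather than something more complicated, and that the signs match, a positive crossing lifting to a right-handed twist. Granting these, the corollary is an immediate consequence of Proposition~\ref{prp:framed-tangle}.
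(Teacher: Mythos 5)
Your argument is correct and is exactly the (standard) argument the paper invokes: the paper states this corollary without proof, attributing the observation to Van Horn-Morris, and the intended justification is precisely the dictionary you describe — the branch set is a closed $3$-braid, the braid axis lifts to the binding, the page is the double cover of the disk branched over three points (a once-punctured torus), and Birman--Hilden sends positive braid generators to right-handed Dehn twists, so the positive braid word $\beta_{t,n}$ for $n>0$ gives positive monodromy. Your verification that the page is $\Sigma_{1,1}$ and that the binding/boundary is connected is a worthwhile inclusion of details the paper omits.
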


\section{Khovanov homology} \label{sec:Kh}

\subsection{Grading conventions for reduced Khovanov homology}\label{sub:grading} Throughout this work we make use of the reduced Khovanov homology of a link $L$, with coefficients in $\bF=\bZ/2\bZ$, denoted $\Khred(L)$ \cite{Khovanov2000,Khovanov2003}. For our purposes, this is a $\frac{1}{2}\bZ\oplus\frac{1}{2}\bZ$-graded group, with primary (cohomological) grading $\delta$ and secondary (quantum) grading $q$. Note that by setting $u=\delta+q\in\bZ$ the Jones polynomial of $L$ is given by \[\sum_{u,q}(-1)^ut^q\rk\Khred^u_q(L).\] In particular, our primary grading is by {\em diagonals} of slope 2 in the $(u,q)$-plane for the standard bigrading in Khovanov homology \cite{Khovanov2000}. 

\begin{definition}\label{def:width}
The homological width of a link $L$ is the integer $w(L)>0$ determined by the number of $\delta$-gradings supporting non-trivial homology in $\Khred(L)$.
\end{definition}

This grading convention, specifically suited to considerations involving homological width, is consistent with that of Manolescu and Ozsv\'ath \cite{MO2007}. For example, let $U$ denote the trivial knot. Then $\Khred(U)=\bF$ supported in bigrading $(0,0)$ (with $w(U)=1$), while $\Khred(U\sqcup U)=\bF\oplus\bF$, with one generator supported in each bigrading $(-\half,\half)$ and $(\half,-\half)$ (with $w(U\sqcup U)=2$). These examples are illustrated in Figure \ref{fig:small-homology}. 

\begin{figure}[ht!]
\begin{center}
\labellist
\small
	\pinlabel $1$ at 324 413
	
	\tiny
	\pinlabel $0$ at 323 378
	\pinlabel $0$ at 290 413
	\endlabellist
\raisebox{0pt}{\includegraphics[scale=0.4]{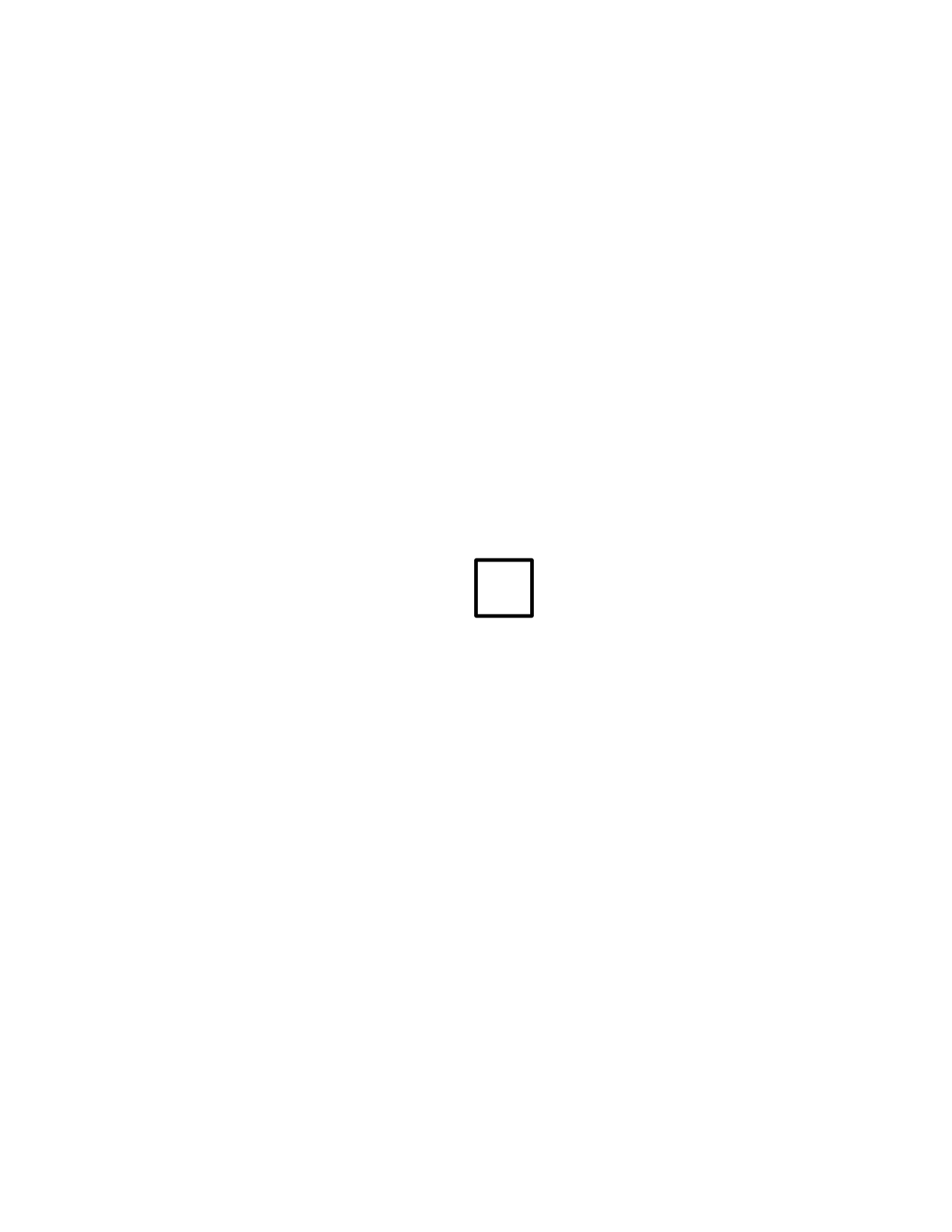}}\qquad\qquad
\labellist\small
	\pinlabel $1$ at 324 450
	\pinlabel $1$ at 360 413
	\tiny
	\pinlabel $-\frac{1}{2}$ at 318 378
	\pinlabel $\frac{1}{2}$ at 356 378
	\pinlabel $-\frac{1}{2}$ at 284 413
	\pinlabel $\frac{1}{2}$ at 290 450
	\endlabellist
\raisebox{0pt}{\includegraphics[scale=0.4]{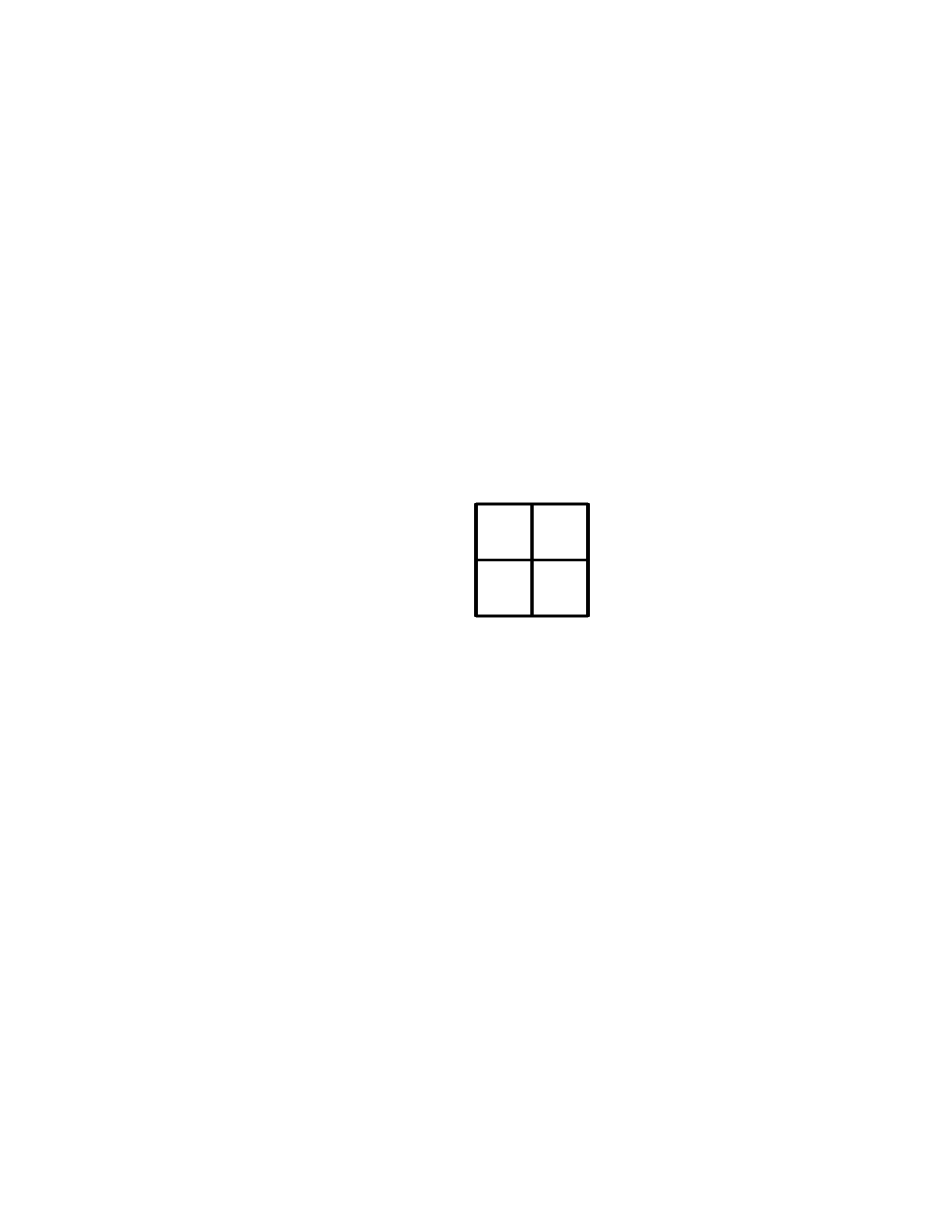}}\qquad\qquad
\labellist\small
	\pinlabel $1$ at 324 413
	\pinlabel $1$ at 324 485	
	\tiny
	\pinlabel $-\frac{1}{2}$ at 321 378
	\pinlabel $\frac{1}{2}$ at 290 413
	\pinlabel $\frac{3}{2}$ at 290 450
	\pinlabel $\frac{5}{2}$ at 290 485
\endlabellist
\raisebox{0pt}{\includegraphics[scale=0.4]{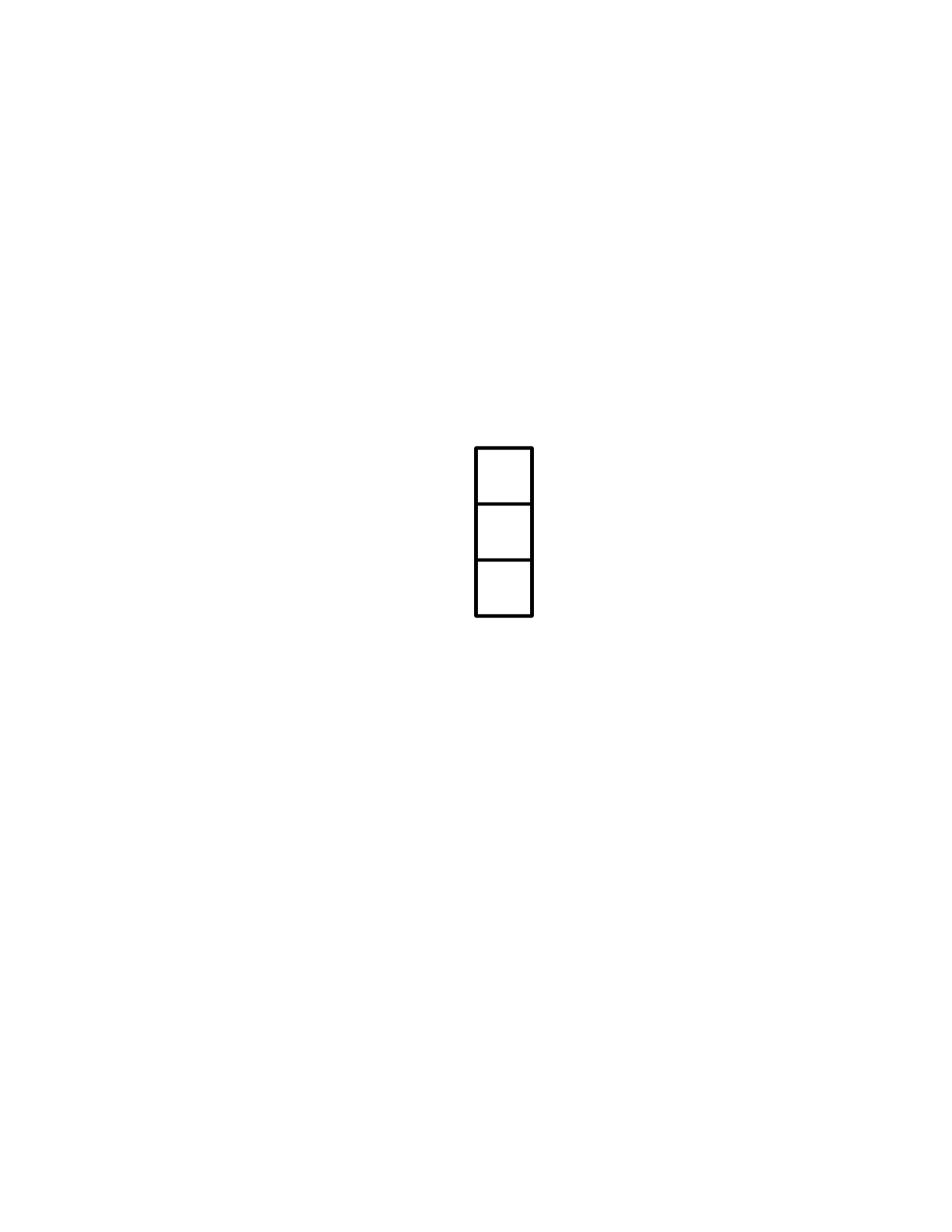}}
\end{center}
\caption{The reduced Khovanov homology of the trivial knot $U$, the two component trivial link $U\sqcup U$, and the positively clasped Hopf link $H^+$, from left to right. The primary grading is read horizontally and the secondary grading is read vertically. The entries indicate the rank of the group at the given bigrading; trivial homology in a given bigrading is left blank.}
\label{fig:small-homology}
\end{figure}

\subsection{The skein exact sequence as a mapping cone}\label{sub:mc} With the above conventions in place, the long exact sequence in Khovanov homology related to the resolution of a positive crossing may be expressed as a mapping cone
 \begin{equation}\label{eqn:cone}
 \Khred(\rightcross) \cong H_*\left( \Khred(\zero)[\textstyle-\half,\half]\to\Khred(\one)[\textstyle-\frac{1}{2}c,\frac{1}{2}(3c+2)]\right)\end{equation}
 where the connecting homomorphism raises the primary grading by one and fixes the secondary grading. The shift in bigrading is defined by \[\Khred^\delta_q(L)[i,j]=\Khred^{\delta-i}_{q-j}(L).\] The constant $c$ measures the difference in negative crossings $n_-(\one)-n_-(\rightcross)$ for some choice of orientation on the components of the resolution $\one$ that do not inherit an orientation from $\rightcross$. Notice that when $L$ is the closure of a positive braid $c=n_-(\one)$. These conventions are consistent with those of Rasmussen \cite{Rasmussen2005} and of Manolescu and Ozsv\'ath \cite{MO2007}.

\subsection{Spectral sequences from iterated mapping cones}\label{sub:iterate} There are various settings in which the mapping cone described above may be efficiently iterated to calculate Khovanov homology. For the present purposes, we establish one such situation given a link represented as the closure of a positive braid. To this end, let $L=\close{\beta}$ where $\beta$ is a positive braid, and fix $n$ distinguished crossings. Fixing, additionally, an order on the distinguished crossings gives rise to a collection of positive braids $\beta_i$ obtained by replacing the first $i$ crossings $\rightcross$ with the oriented resolution $\zero$. Similarly, the link $\close{\beta}$ produces a collection of resolutions $R_i$ obtained by replacing the first $i-1$ crossings $\rightcross$ by the oriented resolution $\zero$ and the $i^{\rm th}$ with unoriented resolution $\one$. (A schematic illustrating the case $n=2$ in an example is shown in Figure \ref{fig:torus-scheme}.) Of course, the $R_i$ are no longer link diagrams in closed-braid form. For each link $R_i$ fix the constant $c_i=n_-(R_i)$ for some choice of orientation on $R_i$ compatible with the orientation on the unaffected components of $\close{\beta_{i}}$, as in the previous section. As a result, from the grading shifts in (\ref{eqn:cone}) we have that 
\begin{equation}\label{eqn:braid-cone}\Khred(\close{\beta_{i-1}}) \cong H_*\left( \Khred(\close{\beta_{i}})[\textstyle-\half,\half]\to\Khred(R_i)[\textstyle-\frac{1}{2}c_i,\frac{1}{2}(3c_i+2)]\right)
\end{equation}
 for $1\leq i\leq n$, where $\beta=\beta_0$. Now define 
 \begin{equation}\label{eqn:braid-SS}\begin{split}
 \sC_i= \begin{cases} \CKhred(R_i)[\textstyle-\frac{1}{2}(c_i+i-1),\frac{1}{2}(3c_i+2+i-1)] & 1\leq i\leq n \\ \CKhred(\close{\beta_n})[-\textstyle\frac{n}{2},\frac{n}{2}] & i=n+1. \end{cases}
\end{split}\end{equation} 
Consulting the definition of $\CKhred(L)$, we see that $\CKhred(L)\cong \bigoplus_{i=1}^{n+1}\sC_i$ as a bigraded $\bF$-vector space. Moreover, omitting the internal differentials on the $\sC_i$ for brevity, the structure of the differential relative to this decomposition is given by
\[\xymatrix@C=40pt@R=5pt{
	\sC_1
	&&& \\
	&
	\sC_2\ar@/^0.7pc/[ul]
	&& \\
	&&
	\sC_3\ar@/^0.7pc/[ul]\ar@/^2pc/[uull]
	&
	\sC_4\ar@/_0.7pc/[l]\ar@/_0.9pc/[ull]\ar@/_1pc/[uulll]\\
	}\] 
(in the case $n=3$) so that, by construction, this decomposition of the chain complex comes with a filtration of the form $\bigoplus_{j=1}^i\sC_j\subset\bigoplus_{j=1}^{i+1}\sC_{j}$ for $1\leq i\leq n$. (The differential takes the form $\sum_{i\ge j} \partial_{i,j}$ in general, where $\partial_{i,j}\co \sC_i\to\sC_j$.) As a result, there is an associated spectral sequence with $E_0(\beta)\cong\bigoplus_{i=1}^{n+1}\sC_i$, converging to $E_\infty(\beta)\cong\Khred(L)$. We will be most interested in the $E_1$ page of this spectral sequence given by 
\begin{equation}\label{eqn:braid-E1}E_1(\beta)\cong\Khred(\close{\beta_n})[-\textstyle\frac{n}{2},\frac{n}{2}]\oplus\left(\bigoplus_{i=1}^n\Khred(R_i)[\textstyle-\frac{1}{2}(c_i-1+i),\frac{1}{2}(3c_i+1+i)]\right).\end{equation}This notation does not reference the order chosen on the resolved crossings, however the crossings and their order will always be made explicit in applications. Notice that the differential on the $E_1$ page is lower triangular (and filtered by diagonals) if we choose a basis corresponding to \[\Khred(\close{\beta_n})\oplus\Khred(R_n)\oplus\Khred(R_{n-1})\oplus\cdots\oplus\Khred(R_1)\] (the grading shifts have been omitted for brevity). In particular, $\Khred(\close{\beta_n})[-\textstyle\frac{n}{2},\frac{n}{2}]$ is never in the target of the differential. 

While the higher differentials may be difficult to calculate in general, the construction ensures that each of these raises $\delta$-grading by one and fixes the $q$-grading, as in the case of the mapping cone from which this spectral sequence is derived. (Note that we are ignoring the spectral sequence grading and working with the grading inherited from $\CKhred(L)$.) Indeed, this construction in the case $n=1$ gives the two step complex 
\[\Khred(\close{\beta_1})[\textstyle-\half,\half]\to\Khred(R_1)[\textstyle-\frac{1}{2}c_1,\frac{1}{2}(3c_1+2)]\] associated with the mapping cone, as in the previous section. 

Even in the case $n=1$, obtaining the differential essentially amounts to calculating the reduced Khovanov homology directly from the definition. In practice, computing the homology by way of this spectral sequence is tantamount to $n$ (careful) iterations of the long exact sequence. However, the $E_1$ page described here can be useful when combined with additional structure (described below). It should be noted that this spectral sequence is closely related to the spectral sequence used by Turner \cite{Turner2008} to compute the (unreduced) Khovanov homology of $(3,q)$-torus links (with coefficients in $\bQ$). Indeed, we will revisit this calculation in Section \ref{sub:twist}. However, we reiterate that the set-up here ignores the spectral sequence gradings; we opt instead to use the grading inherited from $\CKhred$. Another related instance of this construction is given in \cite[Lemma 4.10]{Watson2008} (see also Proposition \ref{prp:iteration}).

\subsection{Turner's spectral sequence}\label{sub:turner} There is an analogy to Lee's spectral sequence \cite{Lee2005} for reduced Khovanov homology due to Turner \cite{Turner2006}, summarized as follows:  

\begin{theorem}[Turner \cite{Turner2006}]\label{thm:turner} There is a perturbed version of the reduced Khovanov complex with homology denoted $\Khred'(L)$ for which $\rk\Khred'(L)=2^{|L|-1}$. Moreover, there is a spectral sequence converging to $E_\infty'=\Khred'(L)$, with $E_1'=\Khred(L)$, satisfying the properties that \\
(1) the differential $d_i$ on the $E_i'$ page is of bi-degree $(1-i,i)$, and \\
(2) for each $n\in\bZ$  \begin{equation}\rk\bigoplus_{\delta+q=n}\Khred'^\delta_q(L)=\frac{1}{2}\displaystyle\Big|\big\{X\subset\{1,\ldots,|L|\}\ \co\delta+q=2\sum_{l\in X, m\notin X}\operatorname{lk}(L_l,L_m)\big\}\Big|\end{equation} where the $L_i$ denote the components of the link $L$; therefore $\rk\bigoplus_{\delta+q=n}\Khred^\delta_q(L)$ is at least this quantity. \end{theorem}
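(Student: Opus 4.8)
\emph{Proof strategy.} Theorem~\ref{thm:turner} is the reduced, characteristic-two avatar of Lee's deformation of Khovanov homology \cite{Lee2005}, and the plan is to reconstruct (Turner's version of) Lee's argument. I would build the perturbed complex from the same cube of resolutions as $\CKhred(L)$, but with the underlying Frobenius algebra $\bF[X]/(X^2)$ replaced by $A'=\bF[X]/(X^2+X)$ and the merge and split maps replaced by the corresponding deformed (co)multiplication; the reduced complex is still cut out by a choice of basepoint on $L$. Over $\bF=\bZ/2\bZ$ the algebra $A'$ is semisimple: setting $a=X$ and $b=1+X$ gives $a^2=a$, $b^2=b$, $ab=0$, $a+b=1$, so $A'\cong\bF\times\bF$, and since $X\cdot A'=\langle a\rangle$ the marked circle is pinned to the idempotent $a$. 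The deformation terms strictly raise the quantum grading, so the perturbed complex is filtered by $q$ with associated graded $\CKhred(L)$; the spectral sequence of this filtered complex therefore has $E_1'\cong\Khred(L)$ and converges to $E_\infty'\cong\Khred'(L)$, the homology of the perturbed complex. The standard analysis of a filtered complex whose only perturbation raises $q$ yields part~(1): the induced differential $d_i$ on $E_i'$ raises $u=\delta+q$ by one (it is a differential) and raises $q$ by $i$, hence has $(\delta,q)$-bidegree $(1-i,i)$.

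The substantive step is the computation of $\Khred'(L)$, which I would carry out exactly as in Lee's paper. Working in the idempotent basis $\{a,b\}$ on each circle makes the deformed (co)multiplications ``diagonal'' in the appropriate sense, and Lee's canonical generators transplant verbatim: for each orientation $o$ of $L$, the oriented (Seifert) resolution $D_o$ is a vertex of the cube, and labelling its circles by $a$ or $b$ according to the standard nesting rule produces a cycle $\mathfrak{s}_o$ in the perturbed complex. Semisimplicity of $A'$ (equivalently, Lee's explicit null-homotopies) shows that the $\mathfrak{s}_o$ represent a spanning set --- after the usual adjustment, a filtered basis --- of $\Khred'(L)$; the characteristic-two incarnation of this step is Turner's contribution \cite{Turner2006}. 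Global orientation reversal exchanges $o$ with $\bar o$ and flips every circle label $a\leftrightarrow b$, hence flips the label of the marked circle, so exactly one of $o,\bar o$ is admissible once the basepoint pins that circle to $a$. This halves the $2^{|L|}$ orientations to $2^{|L|-1}$ generators, giving $\rk\Khred'(L)=2^{|L|-1}$.

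For part~(2) I would track the $(\delta,q)$-bidegree of $\mathfrak{s}_o$. The homological grading of $D_o$ is the number of $1$-resolutions it contains, while the quantum grading of the labelled Seifert state is read off from its circle labels; comparing $\mathfrak{s}_o$ with $\mathfrak{s}_{o_0}$ for a fixed base orientation $o_0$, each crossing joining a reversed component to an unreversed one switches between its oriented and disoriented resolution, and its net contribution to $\delta+q$ follows the pattern that sums to $2\sum_{l\in X,\,m\notin X}\operatorname{lk}(L_l,L_m)$, where $X\subset\{1,\ldots,|L|\}$ records the reversed components. Thus $\mathfrak{s}_o$ lies on the diagonal $\delta+q=2\sum_{l\in X,\,m\notin X}\operatorname{lk}(L_l,L_m)$; collecting generators by diagonal and dividing by the two-to-one identification $X\leftrightarrow X^{c}$ (under which the linking sum is unchanged, by symmetry of $\operatorname{lk}$) gives the displayed formula of part~(2). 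The final clause is then formal: since $E_1'\cong\Khred(L)$, $E_\infty'\cong\Khred'(L)$, and every $d_r$ in a cohomological spectral sequence can only decrease the dimension in each fixed total degree $\delta+q$, the rank of $\Khred(L)$ on any diagonal $\delta+q=n$ is at least that of $\Khred'(L)$, i.e.\ at least the quantity displayed in part~(2).

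The principal obstacle is, as in Lee's original work, twofold: proving that the canonical cycles $\mathfrak{s}_o$ genuinely span the deformed homology --- not merely that they are linearly independent cycles --- which requires the semisimplicity argument or an explicit homotopy; and pinning down the bigrading formula for $\mathfrak{s}_o$, which needs careful accounting of how the homological and quantum gradings of the Seifert state change when the oriented versus disoriented resolution is toggled at each inter-component crossing. Everything else amounts to transcribing the arguments of Lee and Bar-Natan into the reduced, characteristic-two setting.
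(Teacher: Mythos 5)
This statement is quoted in the paper as a result of Turner \cite{Turner2006}; the paper gives no proof of it, only the citation. Your reconstruction --- deforming the Frobenius algebra to the semisimple $\bF[X]/(X^2+X)$, filtering by the quantum grading, producing Lee-type canonical generators indexed by orientations modulo global reversal, and reading off the supporting diagonal from pairwise linking numbers --- is precisely the Lee--Bar-Natan--Turner argument that the cited reference carries out, so your approach matches the intended source and I see no gap in it.
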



Our main computational tool will be in combining the structure of $\Khred'(L)$ with the iterated the mapping cone. Since these both arise from spectral sequence constructions, we will endeavour to refer to {\em pairings} when higher differentials computing $\Khred'(L)$ are in question, and {\em differentials} when referring to the iterated mapping cone in an attempt to avoid confusion. 
 
\begin{remark}Note that our usage of the label {\em Turner's spectral sequence} differs from certain instances of the same term in the literature. Our usage refers to the perturbation described in Theorem \ref{thm:turner} (the subject of \cite{Turner2006}) and not the spectral sequence defined and exploited in \cite{Turner2008}. \end{remark}

\subsection{The reduced Khovanov homology of a full twist}\label{sub:twist} The reduced Khovanov homology for the first 3 non-trivial torus links on three strands is described in Figure \ref{fig:base-cases}. The $\delta$-grading is read horizontally and the $q$-grading is read vertically. The values at a given lattice point denote the rank of the $\bF$-vector space in that bigrading; empty lattice points are read as rank zero.  

\begin{figure}[ht!]
\begin{center}
\labellist\small
	\pinlabel $1$ at 324 413
	\pinlabel $1$ at 324 485
	\pinlabel $1$ at 324 522
	
	\tiny
	\pinlabel $-1$ at 321 378
	\pinlabel $1$ at 290 413
	\pinlabel $2$ at 290 450
	\pinlabel $3$ at 290 485
	\pinlabel $4$ at 290 522
	\endlabellist
\raisebox{0pt}{\includegraphics[scale=0.4]{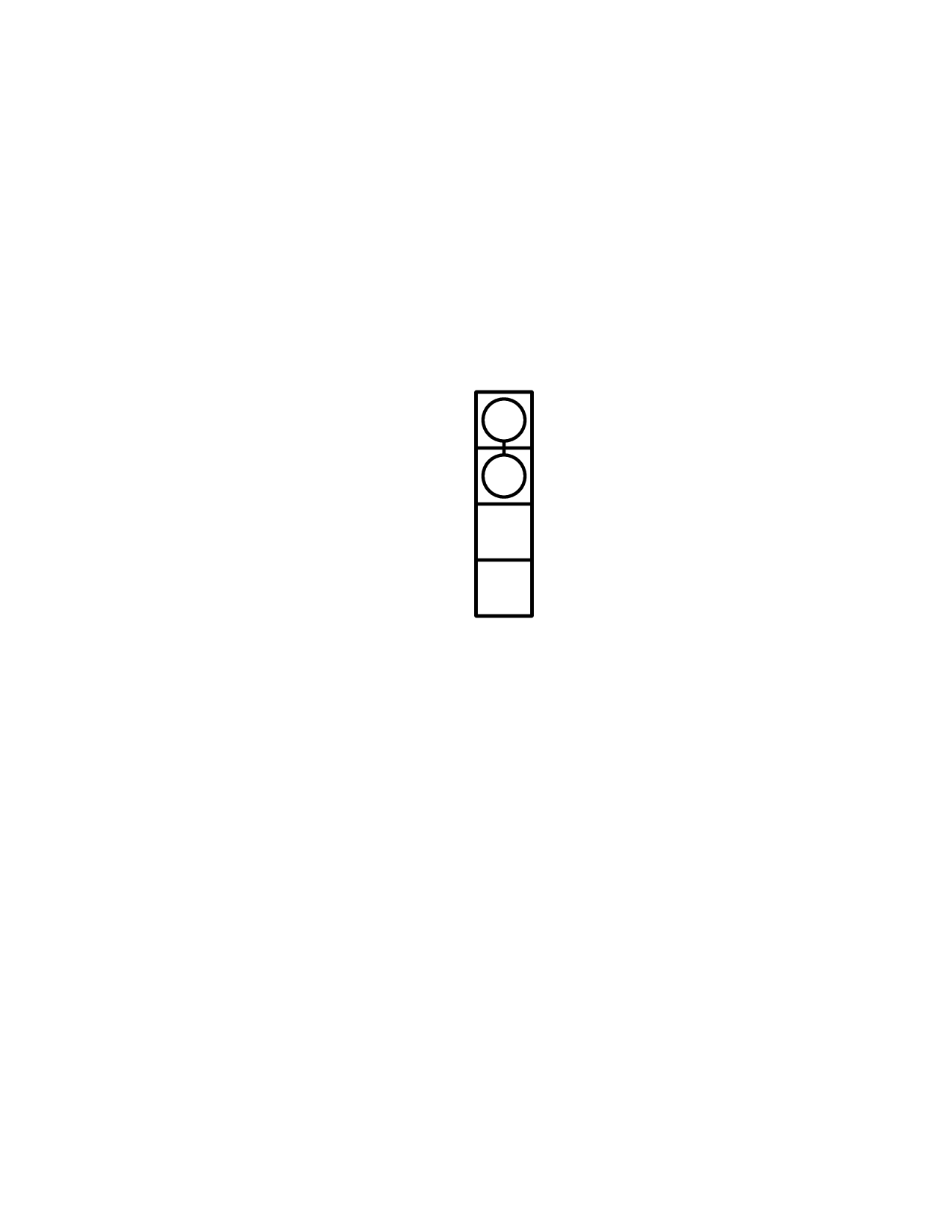}}\qquad\qquad
\labellist\small
	\pinlabel $1$ at 324 413
	\pinlabel $1$ at 324 485
	\pinlabel $1$ at 324 522
	\pinlabel $2$ at 324 557
	
	\pinlabel $1$ at 359 522
	
	\tiny
	\pinlabel $-2$ at 321 378
	\pinlabel $-1$ at 356 378
	\pinlabel $2$ at 290 413
	\pinlabel $3$ at 290 450
	\pinlabel $4$ at 290 485
	\pinlabel $5$ at 290 522
	\pinlabel $6$ at 290 557
	
	\endlabellist
\raisebox{0pt}{\includegraphics[scale=0.4]{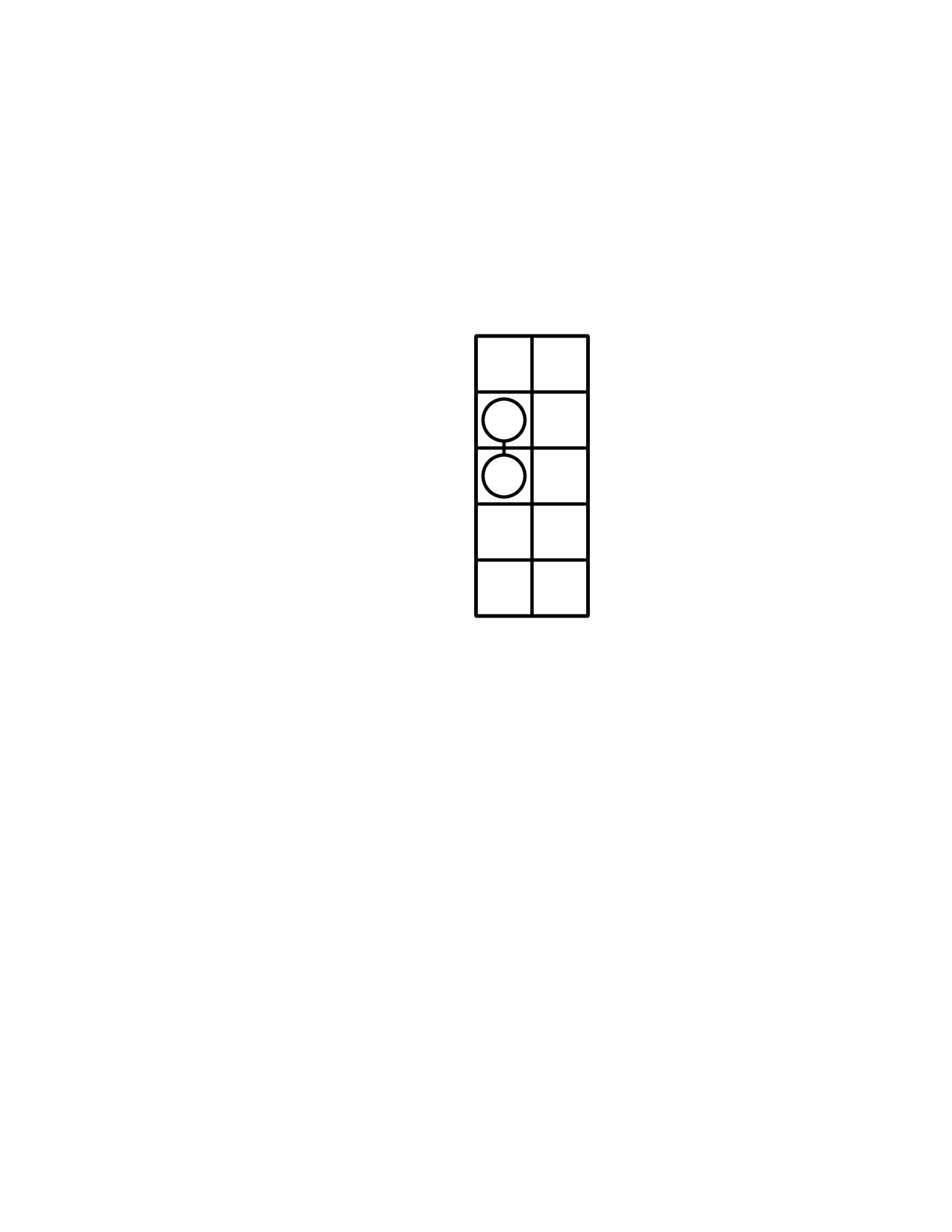}}\qquad\qquad
\labellist\small
	\pinlabel $1$ at 324 413
	\pinlabel $1$ at 324 485
	\pinlabel $1$ at 324 522
	\pinlabel $1$ at 324 594
	
	\pinlabel $1$ at 359 522
	
	\tiny
	\pinlabel $-3$ at 321 378
	\pinlabel $-2$ at 356 378
	\pinlabel $3$ at 290 413
	\pinlabel $4$ at 290 450
	\pinlabel $5$ at 290 485
	\pinlabel $6$ at 290 522
	\pinlabel $7$ at 290 557
	\pinlabel $8$ at 290 594
\endlabellist
\raisebox{0pt}{\includegraphics[scale=0.4]{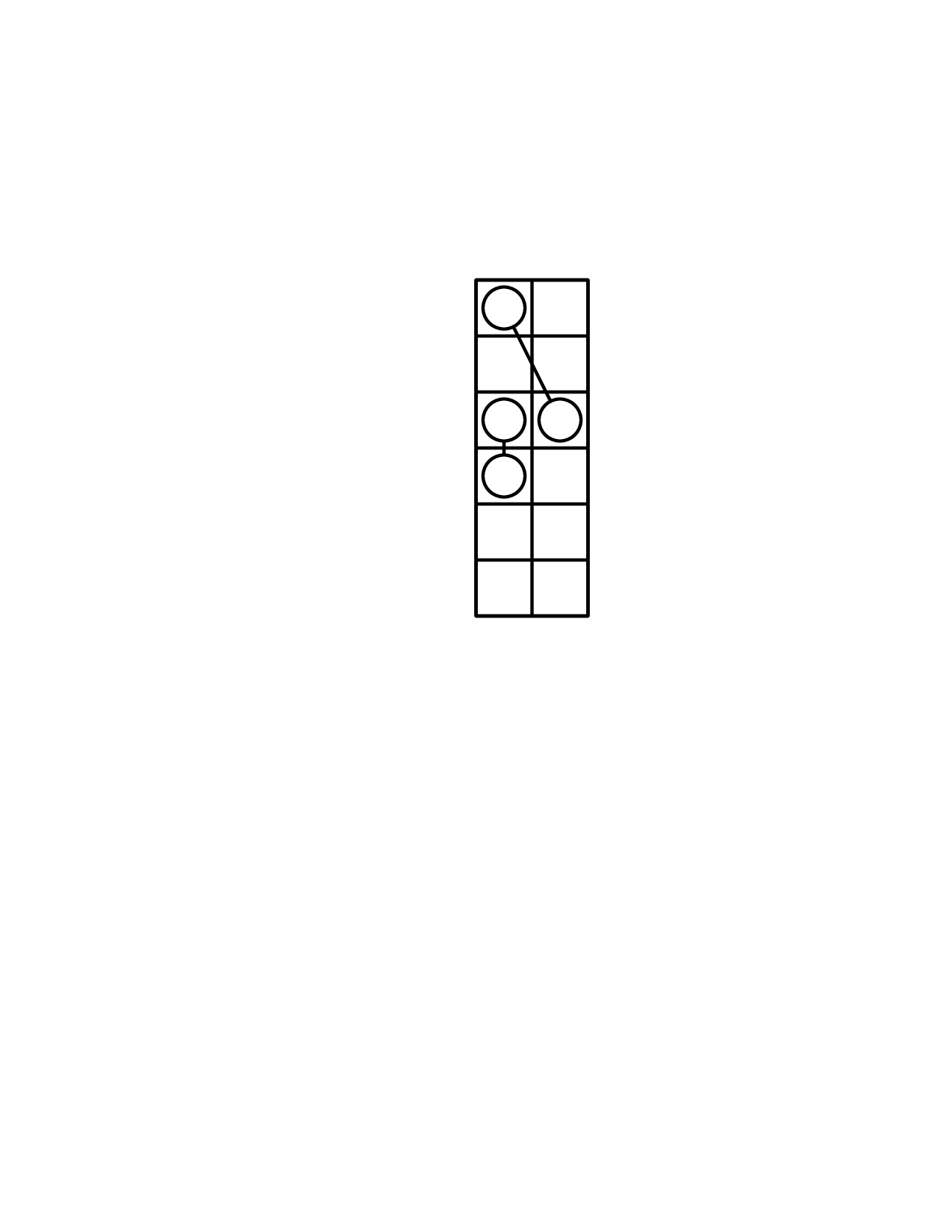}}
\end{center}
\caption{The Khovanov homology of the $(3,2)$-, $(3,3)$- and $(3,4)$-torus links, from left to right. Note that the generators have been paired in each case illustrating the higher differentials present in Turner's spectral sequence.}
\label{fig:base-cases}
\end{figure}

Using Lee's spectral sequence, Turner establishes the full Khovanov homology (with coefficients in $\bQ$) for three-strand torus links \cite{Turner2008}. Following this proof, a similar result may be established in the present setting, modulo a particular indeterminate summand described in Figure \ref{fig:tetris}. We will ultimately be interested in the $(3,3t)$-torus links $T_{3,3t}$ for $t>0$. Note that this is the closure of the braid $(\si_2\si_1)^{3t}=\Delta^t$. 

\begin{figure}[ht!]
\begin{center}
\raisebox{0pt}{\includegraphics[scale=0.4]{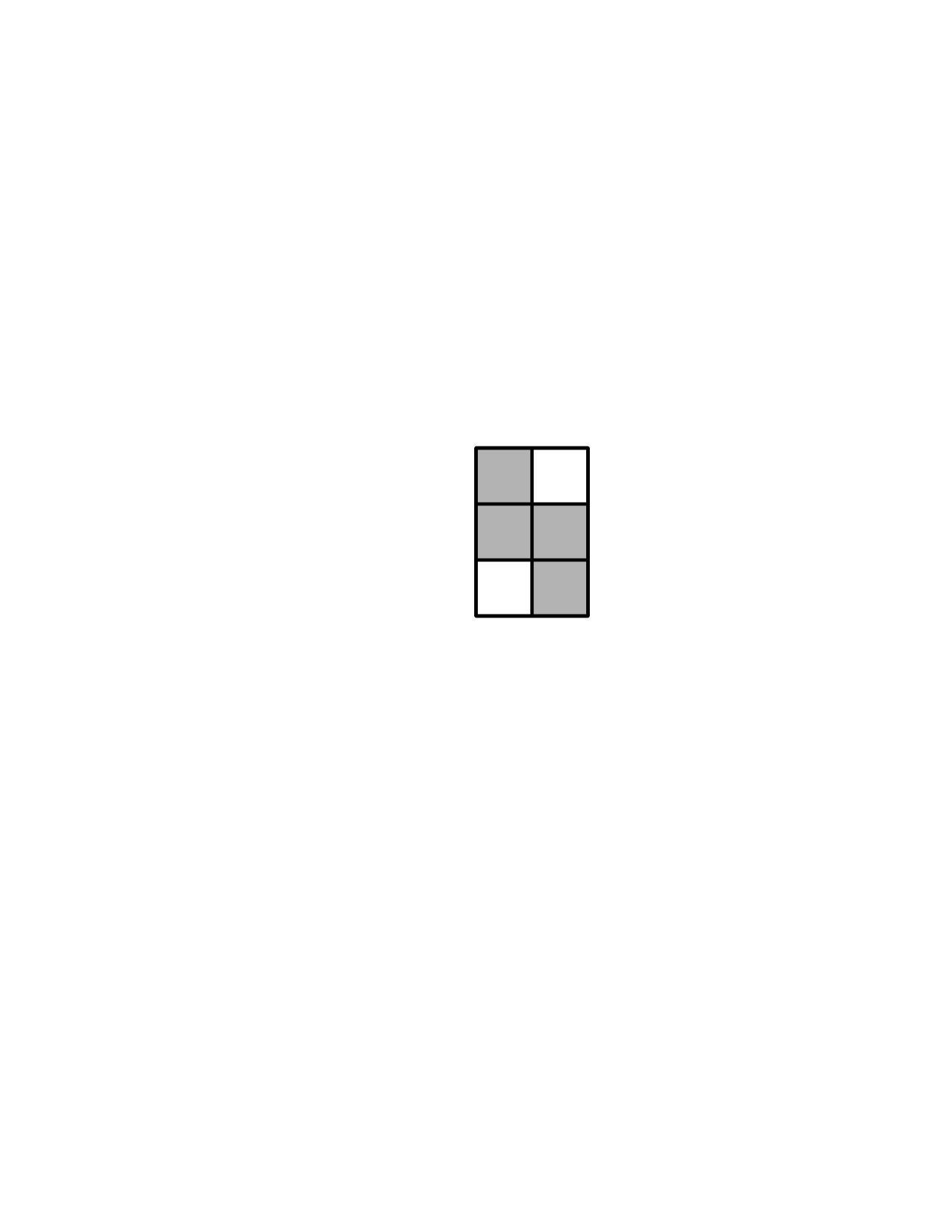}}\qquad\qquad
\labellist\small
	\pinlabel $1$ at 324 450
	\pinlabel $1$ at 324 485
	
	\pinlabel $1$ at 360 413
	\pinlabel $1$ at 360 450
	\endlabellist
\raisebox{0pt}{\includegraphics[scale=0.4]{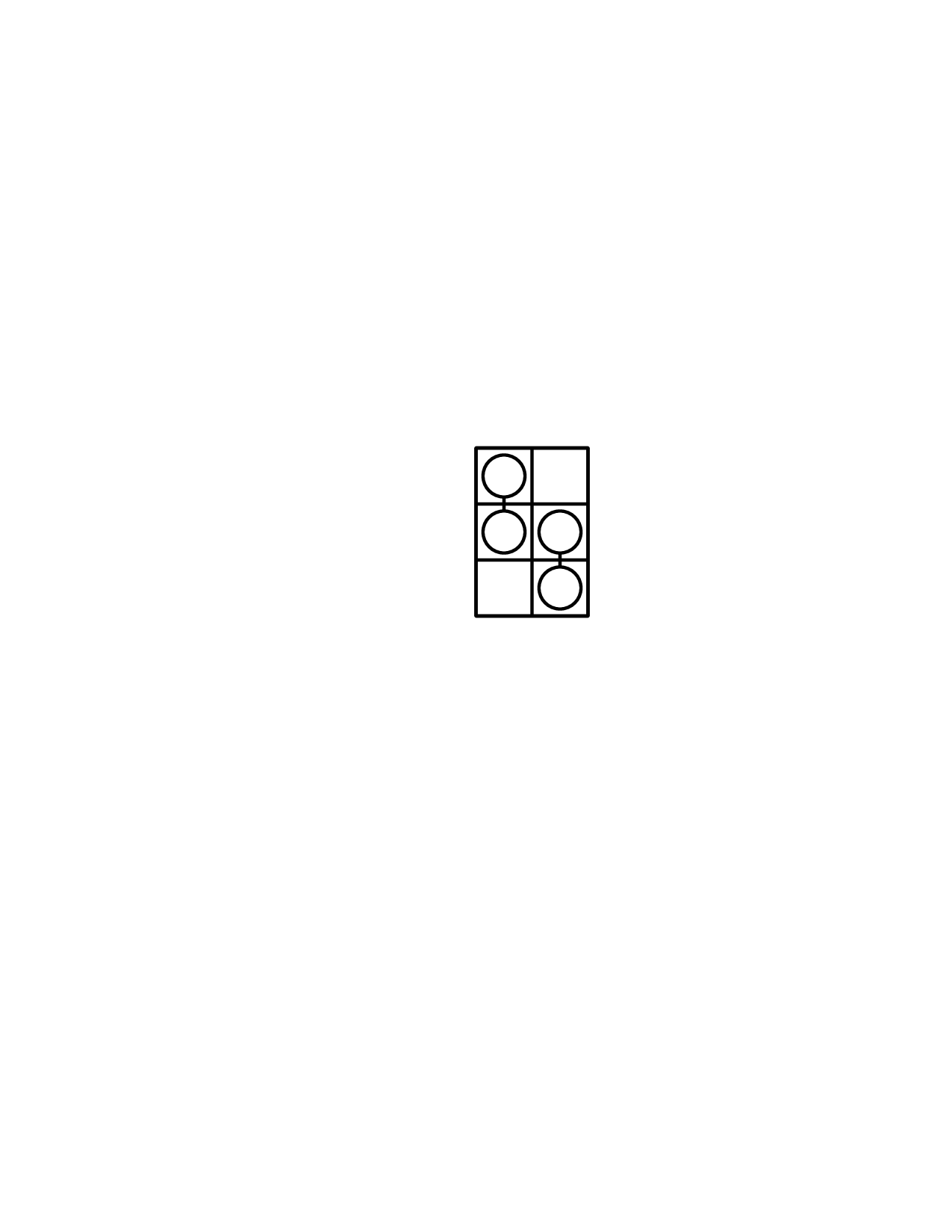}}\qquad\qquad
\labellist\small
	\pinlabel $1$ at 324 485
	\pinlabel $1$ at 360 413
\endlabellist
\raisebox{0pt}{\includegraphics[scale=0.4]{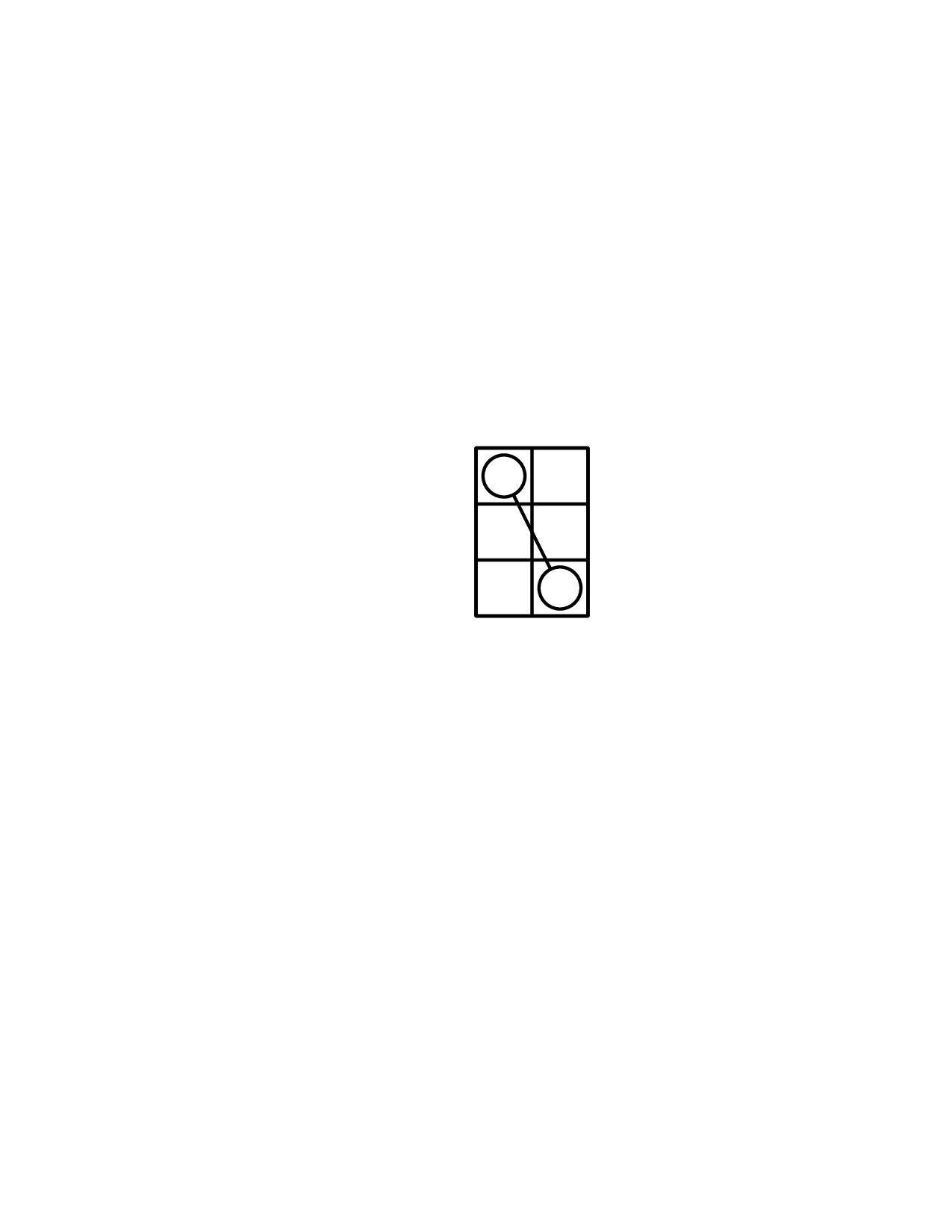}}
\end{center}
\caption{The two possibilities for the indeterminate summand, with pairings labelled corresponding to higher differentials in Turner's spectral sequence.}
\label{fig:tetris}
\end{figure}

\begin{remark}\label{rmk:tetris}  Since we will be interested only in coarse properties of the Khovanov  homology of the full twist in the sequel (in particular, the homological width), the actual value of this indeterminate summand will have no contribution. In practice, the actual rank of this summand is 2 (as in $\Khred(T_{3,4})$ for example, see Figure \ref{fig:base-cases}), though we will not prove this in general. Calculation up to indeterminate summands is relatively easy, and in general the homological width of a link seems easier to determine than the complete Khovanov homology.   \end{remark}

\begin{proposition}\label{prp:torus} The reduced Khovanov homology of the positive $T_{3,q}$ torus link is described in  Figure \ref{fig:Kh-torus-knots} for $q>0$.
\end{proposition}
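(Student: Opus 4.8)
The plan is to transport Turner's computation of the Khovanov homology of $(3,q)$-torus links \cite{Turner2008} into the reduced theory over $\bF=\bZ/2\bZ$, using the iterated mapping cone of Section \ref{sub:iterate} in place of the repeated skein sequence and Turner's spectral sequence of Section \ref{sub:turner} (Theorem \ref{thm:turner}) in place of Lee's. The proof is by induction on $q$. The base cases are $q=1$, where $T_{3,1}=U$ is the unknot, together with $q=2,3,4$, which are recorded in Figure \ref{fig:base-cases}. Because the homeomorphism type of $T_{3,q}$ — a knot when $3\nmid q$ and a three-component link when $3\mid q$ — depends on $q\bmod 3$, the inductive statement is phrased with a case split on the residue of $q$, but the inductive step is uniform.

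For the inductive step, realize $T_{3,q}$ as the closure of the positive braid $\beta=(\si_1\si_2)^q$ and distinguish the two crossings of the final $\si_1\si_2$ syllable, taking $n=2$ in the construction of Section \ref{sub:iterate}. The fully oriented-resolved braid $\beta_2$ closes to $T_{3,q-1}$, to which the inductive hypothesis applies; each of the two intermediate resolutions $R_1,R_2$ contains a turnback, hence — after Markov destabilization and discarding split unknot summands, all operations under which $\Khred$ transforms predictably — is a two-strand torus link $T_{2,\ast}$, whose reduced Khovanov homology is the classical staircase. Substituting these and the normalizing constants $c_i=n_-(R_i)$ into $(\ref{eqn:braid-E1})$ produces the $E_1$ page of the spectral sequence. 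By construction every higher differential on this page raises $\delta$ by one and fixes $q$, so on each diagonal it is pinned down by a rank count; one reads off $\Khred(T_{3,q})$ except for the single summand depicted in Figure \ref{fig:tetris}.

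To confirm that this is the only remaining ambiguity — and that the surviving classes occupy exactly the diagonals claimed in Figure \ref{fig:Kh-torus-knots} — one plays the $E_1$-page count off against Theorem \ref{thm:turner}. Since $|T_{3,q}|=\gcd(3,q)$, the group $\Khred'(T_{3,q})$ has total rank $1$ when $3\nmid q$ and rank $4$ when $3\mid q$, and formula (2) of Theorem \ref{thm:turner}, applied with all pairwise linking numbers equal to $q/3$, places these survivors on the diagonal $\delta+q=0$ (in the knot case) or on the diagonals $\delta+q=0$ and $\delta+q=\tfrac{4q}{3}$ (in the link case). The spectral sequence with $E_1'=\Khred(T_{3,q})$ converging to this very small $E_\infty'$, together with the bidegree $(1-i,i)$ constraint on its pairings, then forces all of $\Khred(T_{3,q})$ outside the Figure \ref{fig:tetris} summand, as the generators are paired in Figure \ref{fig:base-cases}.

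The main obstacle is the bookkeeping of the differentials on the $E_1$ page of the iterated mapping cone: a priori these are as hard to determine as $\Khred(T_{3,q})$ itself, and the substance of the argument is that for this particular family the grading shifts in $(\ref{eqn:braid-E1})$, the known staircase shape of the $R_i$ contributions, and the Turner-spectral-sequence constraints together leave precisely the ambiguity of Figure \ref{fig:tetris}. A secondary point demanding care is the identification of the $R_i$ up to destabilization and deletion of split unknots, and tracking the constants $c_i$ (and the resulting grading shifts) correctly through those moves; by Remark \ref{rmk:tetris} this imprecision is harmless for the intended application, since the indeterminate summand is supported on a fixed pair of $\delta$-diagonals and hence does not affect $w(T_{3,q})$.
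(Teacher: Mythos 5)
Your overall strategy coincides with the paper's: induct on $q$ (in practice three separate implications, one for each residue of $q$ modulo $3$), resolve the two crossings of the final syllable via the iterated mapping cone of Section \ref{sub:iterate} so that $\overline{\beta_2}$ closes to $T_{3,q-1}$, and use Theorem \ref{thm:turner} to force the differentials on $E_1$. However, there is a concrete gap at the computational heart of the inductive step: your identification of the intermediate resolutions $R_1,R_2$. These are not two-strand torus links with ``classical staircase'' homology; in each of the three cases they simplify all the way down to the unknot $U$ or the two-component unlink $U\sqcup U$ (for instance, in the step $T_{3,3t-1}\to T_{3,3t}$ one finds $R_1\simeq U\sqcup U$ and $R_2\simeq U$, with $c_1=c_2=4t-1$). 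This is not a cosmetic point: the entire method rests on the fact that the $R_i$ contribute only $2$ or $3$ new generators to $E_1$, so that after placing the $\Khred'$-survivors via part (2) of Theorem \ref{thm:turner} there is at most one ambiguous differential per step. If the $R_i$ carried staircases of length growing with $q$, your claim that the higher differentials are ``pinned down by a rank count'' would fail outright, and the resulting total ranks would grow quadratically rather than linearly in $q$. As written, your proposal asserts but does not establish the one computation that makes the argument close up.

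A secondary inaccuracy: the inductive step is not uniform across the three residues. For $T_{3,3t-1}\to T_{3,3t}$ and $T_{3,3t+1}\to T_{3,3t+2}$ the survivor count from Theorem \ref{thm:turner} forces the single candidate differential to \emph{vanish}, whereas for $T_{3,3t}\to T_{3,3t+1}$ it forces a differential $\bF^2\to\bF^2$ to be \emph{nonzero} of undetermined rank --- this is precisely where the indeterminate summand of Figure \ref{fig:tetris} is created, rather than being an ambiguity already visible on the $E_1$ page. Your application of Theorem \ref{thm:turner} itself (total rank $1$ versus $4$, survivors on the diagonals $\delta+q=0$ and $\delta+q=4q/3$) is correct and matches the paper.
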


\begin{figure}[ht!]
\begin{center}
\ \ \
\labellist\small
	\pinlabel $1$ at 324 413
	\pinlabel $1$ at 324 485
	\pinlabel $1$ at 324 522
	
	\pinlabel $1$ at 360 594
	\pinlabel $1$ at 360 630
		
	\pinlabel $1$ at 432 702	
	\pinlabel $1$ at 432 739	
		
	\pinlabel $1$ at 468 810	
	\pinlabel $1$ at 468 846

	\pinlabel \rotatebox{-45}{$\vdots$} at 400 671

	\tiny
	\pinlabel \rotatebox{-90}{$2-3t$} at 322 363
	\pinlabel \rotatebox{-90}{$1-3t$} at 357 363
	\pinlabel $\hdots$ at 396 365
	\pinlabel \rotatebox{-90}{$1-2t$} at 468 363
	
	\pinlabel $3t-2$ at 275 413
	\pinlabel $3t-1$ at 275 450
	\pinlabel $\vdots$ at 285 495
	
	\pinlabel $6t-3$ at 275 810
	\pinlabel $6t-2$ at 275 845		
	\endlabellist
\raisebox{0pt}{\includegraphics[scale=0.4]{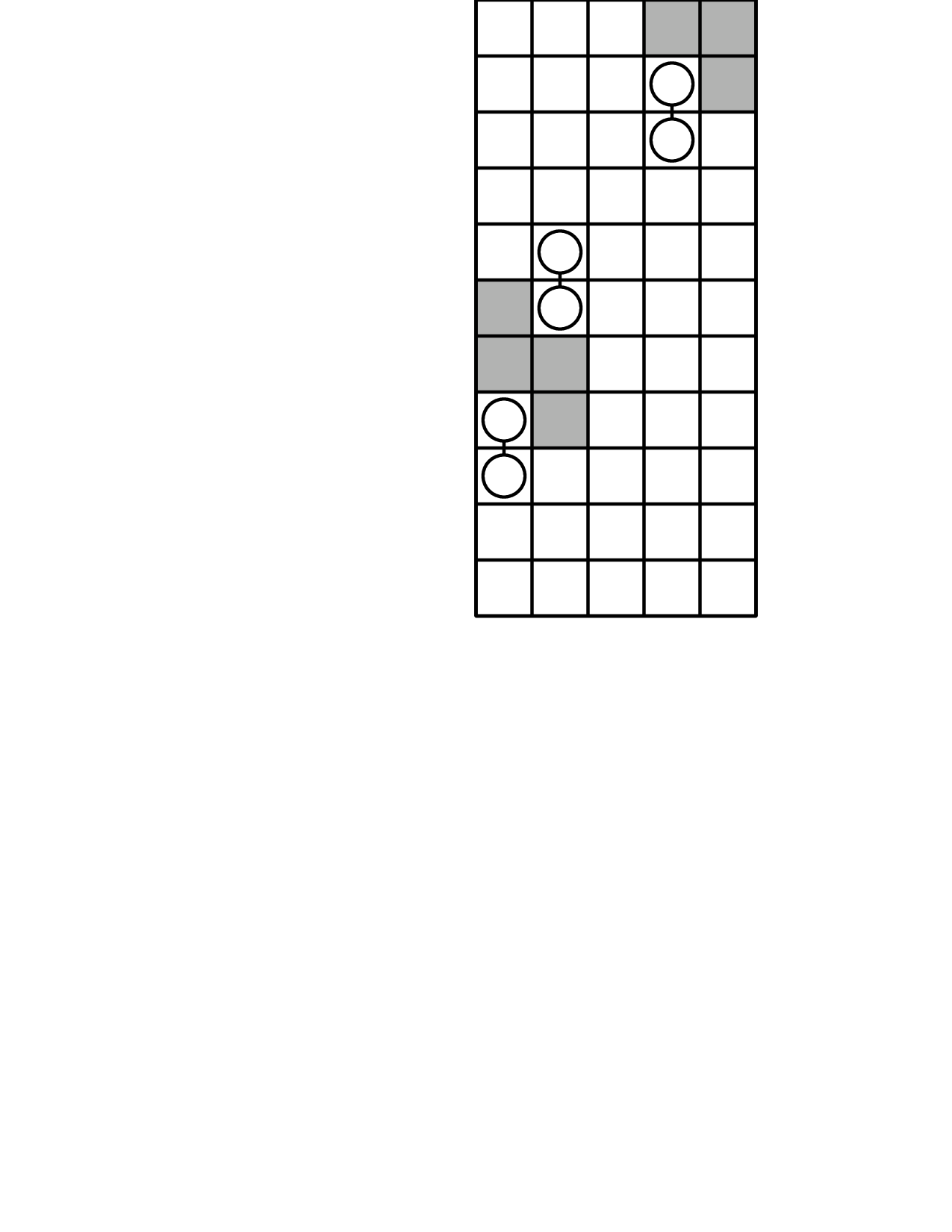}}
\quad\qquad
\labellist\small
	\pinlabel $1$ at 324 413
	\pinlabel $1$ at 324 485
	\pinlabel $1$ at 324 522
	
	\pinlabel $1$ at 360 594
	\pinlabel $1$ at 360 630
		
	\pinlabel $1$ at 432 702	
	\pinlabel $1$ at 432 739	
		
	\pinlabel $1$ at 468 810	
	\pinlabel $1$ at 468 846

	\pinlabel $2$ at 468 879
	
	\pinlabel $1$ at 504 844

	\pinlabel \rotatebox{-45}{$\vdots$} at 400 671

	\tiny
	\pinlabel \rotatebox{-90}{$1-3t$} at 322 363
	\pinlabel \rotatebox{-90}{$2-3t$} at 357 363
	\pinlabel $\hdots$ at 396 365
	\pinlabel \rotatebox{-90}{$1-2t$} at 503 363
	
	\pinlabel $3t-1$ at 275 413
	\pinlabel $3t$ at 285 450
	\pinlabel $\vdots$ at 285 495
	
	\pinlabel $6t-1$ at 275 845
	\pinlabel $6t$ at 285 881
	
	\endlabellist
\raisebox{0pt}{\includegraphics[scale=0.4]{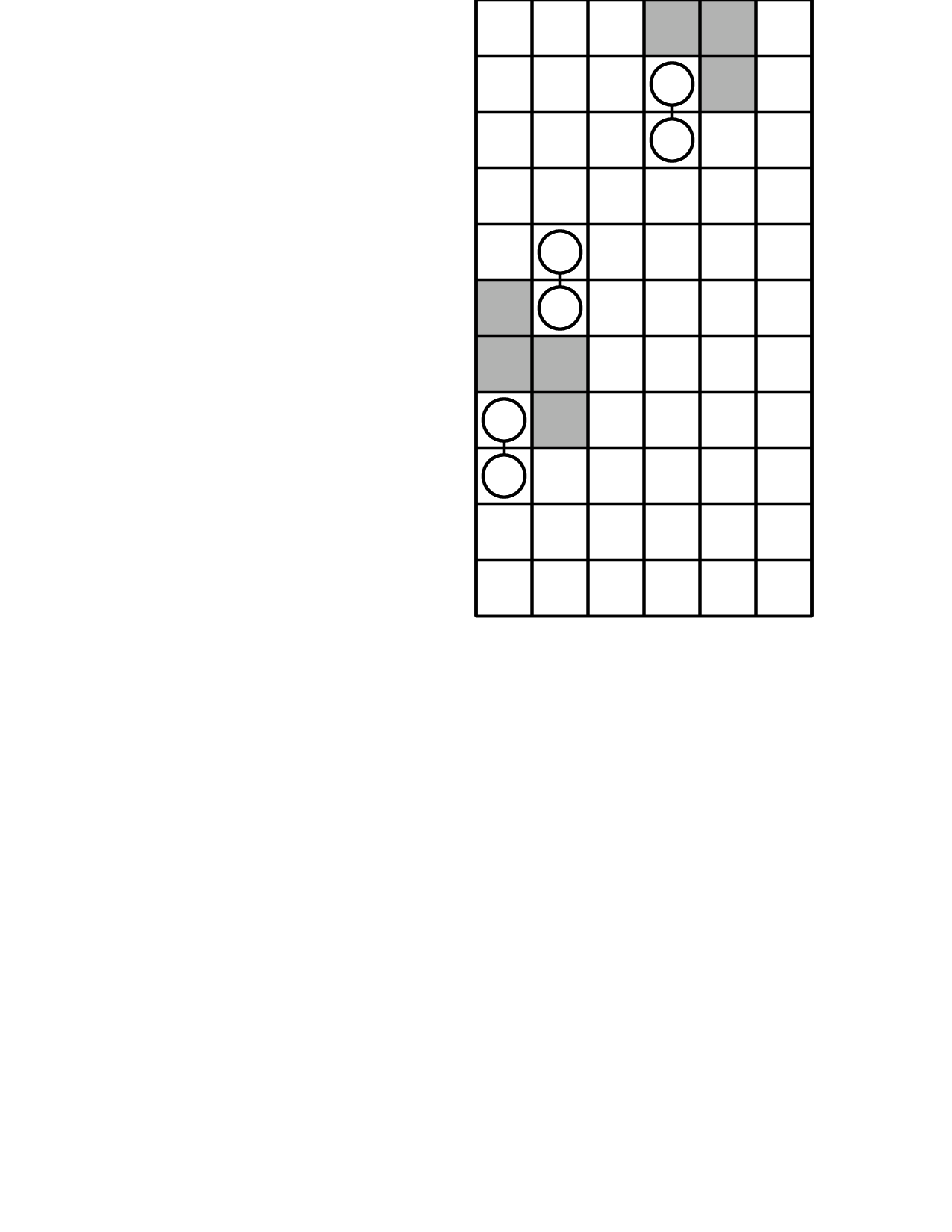}}
\quad\qquad
\labellist\small
	\pinlabel $1$ at 324 413
	\pinlabel $1$ at 324 485
	\pinlabel $1$ at 324 522
	
	\pinlabel $1$ at 360 594
	\pinlabel $1$ at 360 630
		
	\pinlabel $1$ at 432 702	
	\pinlabel $1$ at 432 739	
		
	\pinlabel $1$ at 468 810	
	\pinlabel $1$ at 468 846

	\pinlabel \rotatebox{90}{$\underbrace{\phantom{iaaaaaaaaaaaaaaaaaaaaaaaaaaaaaaaaaa}}$} at 525 720
	\pinlabel $\times t$ at 550 720
	\pinlabel \rotatebox{-45}{$\vdots$} at 400 671

	\tiny
	\pinlabel \rotatebox{-90}{$-3t$} at 322 367
	\pinlabel \rotatebox{-90}{$1-3t$} at 357 363
	\pinlabel $\hdots$ at 396 365
	\pinlabel \rotatebox{-90}{$-2t$} at 505 367
	
	\pinlabel $3t$ at 285 413
	\pinlabel $3t+1$ at 275 450
	\pinlabel $\vdots$ at 285 495
	
	\pinlabel $6t+1$ at 275 881
	\pinlabel $6t+2$ at 275 918

	\endlabellist
\raisebox{0pt}{\includegraphics[scale=0.4]{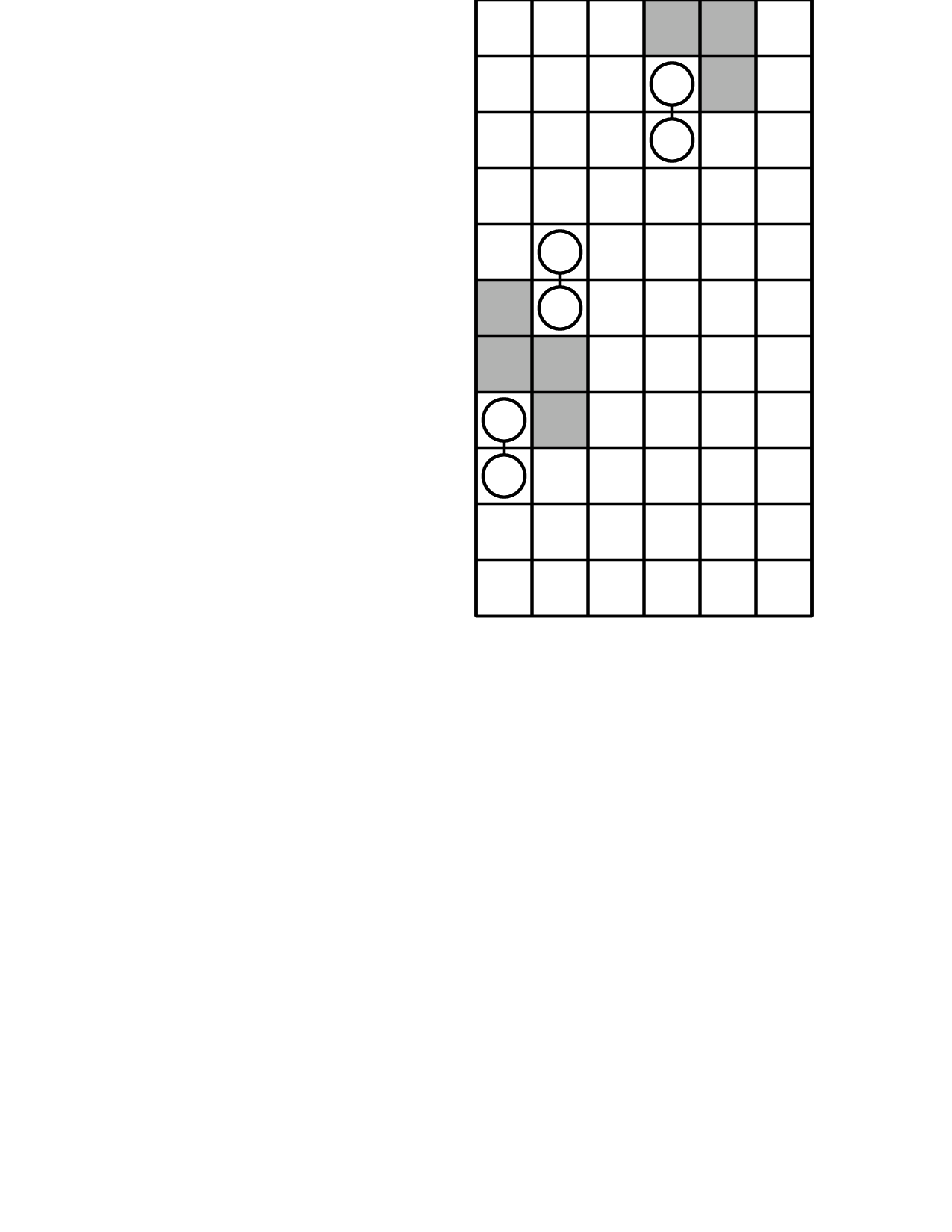}}
\vspace*{20pt}
\end{center}
\caption{The reduced Khovanov homology of the torus links $T_{3,3t-1}$, $T_{3,3t}$ and $T_{3,3t+1}$, for $t>0$, from left to right. Note that $\Khred(T_{3,3t-1})$ and $\Khred(T_{3,3t})$ have only $t-1$ indeterminate summands. }
\label{fig:Kh-torus-knots}
\end{figure}

Following Turner \cite{Turner2008}, the proof of Proposition \ref{prp:torus} is by induction in $t$ (with base case $t=1$ provided by Figure \ref{fig:base-cases}) in three steps: 

\begin{claim}\label{cl:1}If the result holds for $T_{3,3t-1}$ then it holds for $T_{3,3t}$.\end{claim}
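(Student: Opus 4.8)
The plan is to run one step of the iterated-mapping-cone machinery of Section \ref{sub:iterate}, resolving a single distinguished crossing ($n=1$) in the positive braid word for $T_{3,3t}$, together with the constraint on the total rank coming from Turner's spectral sequence (Theorem \ref{thm:turner}). Write $T_{3,3t}$ as the closure of $\Delta^t = (\si_2\si_1)^{3t}$, and single out the \emph{last} $\si_1$ in the word, so that $\beta_1 = \Delta^t\si_1^{-1}$ closes to $T_{3,3t-1}$ and $R_1$ is the unoriented resolution of that crossing. First I would identify $R_1$: resolving one crossing of $\si_1$ in the three-braid with the unoriented ($\one$) resolution caps off two of the strands, leaving a two-strand braid, so $\close{R_1}$ is (up to isotopy) a $(2,k)$-torus link whose reduced Khovanov homology is classical and easy to write down; I would compute the exponent $k$ directly from the word and likewise compute $c_1 = n_-(R_1)$ for the induced orientation. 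Then (\ref{eqn:braid-cone}) in the case $n=1$ gives the two-step complex
\[
\Khred(T_{3,3t}) \cong H_*\!\left(\Khred(T_{3,3t-1})[\textstyle-\half,\half]\longrightarrow \Khred(\close{R_1})[\textstyle-\frac{1}{2}c_1,\frac{1}{2}(3c_1+2)]\right),
\]
whose connecting map raises $\delta$ by one and fixes $q$.

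Next I would feed in the inductive hypothesis: by assumption $\Khred(T_{3,3t-1})$ is as pictured in the left panel of Figure \ref{fig:Kh-torus-knots} (with $t-1$ indeterminate summands), and $\Khred(\close{R_1})$ is the explicit $(2,k)$-torus-link answer just computed. Placing both, with their grading shifts, into the $(\delta,q)$-plane, the differential is forced on most generators for degree reasons: it can only go between a generator of $\Khred(T_{3,3t-1})$ and a generator of $\Khred(\close{R_1})$ sitting one unit higher in $\delta$ and in the same $q$. Comparing the two pictures diagonal by diagonal, I expect the differential to be an isomorphism on a large ``staircase'' of matched generators, cancelling them in pairs, and to vanish on the remaining generators, which then assemble — after undoing the shifts — into exactly the middle panel of Figure \ref{fig:Kh-torus-knots}, namely $\Khred(T_{3,3t})$ with its $t-1$ indeterminate summands. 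Where the mapping-cone degree count alone leaves an ambiguity (the indeterminate summand, cf.\ Remark \ref{rmk:tetris}, and possibly one more spot), I would pin down the total rank using part (2) of Theorem \ref{thm:turner}: since $T_{3,3t}$ is a three-component link with pairwise linking number $t$, the Turner homology $\Khred'(T_{3,3t})$ has rank $2^{2}=4$ and is supported in prescribed diagonals, which bounds $\rk\Khred(T_{3,3t})$ from below and constrains how many generators can survive the connecting map; this is enough to reconcile the bookkeeping without computing the differential on the ambiguous part.

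The main obstacle is the middle step: showing the connecting homomorphism really is surjective (equivalently, has maximal rank) on the staircase of matched generators, rather than merely that it \emph{could} be for degree reasons. Degree considerations bound the rank of the differential from above; the matching lower bound is what forces the cancellation. I would get this lower bound by combining the Euler-characteristic constraint (the graded Euler characteristics of $\Khred(T_{3,3t})$, $\Khred(T_{3,3t-1})$ and $\Khred(\close{R_1})$ are Jones polynomials and are related by the skein relation, so the ranks of the surviving homology are determined mod $2$ in each bidegree) with the rank bound from Turner's spectral sequence above; together these squeeze the rank of the connecting map to exactly the value needed. The remaining steps — identifying $\close{R_1}$, computing $c_1$ and $k$, and reading off the shifted pictures — are routine diagram manipulations and bookkeeping, which I would carry out explicitly but not belabor here.
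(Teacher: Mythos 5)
There is a genuine gap at the very first step. The oriented resolution of a single crossing of $(\si_2\si_1)^{3t}$ is \emph{not} $T_{3,3t-1}$: deleting the last $\si_1$ leaves the closure of $(\si_2\si_1)^{3t-1}\si_2$, a positive braid closure with $6t-1$ crossings whose fibre surface has Euler characteristic $4-6t$, whereas $T_{3,3t-1}$ has $5-6t$; concretely, for $t=1$ one obtains $T_{2,4}$ rather than the trefoil $T_{3,2}$. (A quicker sanity check: an oriented resolution changes the number of components by exactly one, so from the three-component link $T_{3,3t}$ you can never reach the knot $T_{3,3t-1}$ in a single step.) Consequently your $n=1$ mapping cone does not express $\Khred(T_{3,3t})$ in terms of $\Khred(T_{3,3t-1})$, and the induction as you have set it up does not close. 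To drop the torus-link index by one you must resolve a full $\si_2\si_1$ factor, i.e.\ take $n=2$ in the iterated mapping cone of Section \ref{sub:iterate}; this is what the paper does, with $c_1=c_2=4t-1$ and unoriented resolutions $R_1\simeq U\sqcup U$ and $R_2\simeq U$, so that only $3$ new generators appear alongside $\Khred(T_{3,3t-1})[-1,1]$.

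A second, related misreading: you anticipate that the connecting map cancels a large ``staircase'' of matched generators and propose to prove it has maximal rank there. For this claim the opposite happens: after the grading shifts there is exactly \emph{one} possible non-trivial differential, and Turner's spectral sequence forces it to vanish. Since $T_{3,3t}$ has three components with pairwise linking number $t$, part (2) of Theorem \ref{thm:turner} gives $\rk\Khred'(T_{3,3t})=4$ with three generators on the diagonal $\delta+q=4t$ --- precisely where the three new generators sit --- so all of them must survive and the differential is zero; no Euler-characteristic or Jones-polynomial bookkeeping is needed. The heavy cancellation you describe is what occurs in the proof of Claim \ref{cl:2}, not here. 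Your instinct to invoke Turner's rank count is the right one, but in this step it is used to \emph{kill} the differential rather than to certify its surjectivity.
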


\begin{claim}\label{cl:2}If the result holds for $T_{3,3t}$ then it holds for $T_{3,3t+1}$.\end{claim}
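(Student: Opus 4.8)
The plan is to obtain $\Khred(T_{3,3t+1})$ from $\Khred(T_{3,3t})$ by running the iterated mapping cone spectral sequence of Section~\ref{sub:iterate} on the positive three-braid $\beta_{0}=\Delta^{t}\si_{2}\si_{1}$, whose closure is $T_{3,3t+1}$ (indeed $(\si_{2}\si_{1})^{3t+1}=\Delta^{t}\si_{2}\si_{1}$). Take $n=2$ distinguished crossings, namely the trailing $\si_{2}$ and $\si_{1}$, ordered so that the first crossing is the final $\si_{1}$; then the oriented resolutions give $\beta_{1}=\Delta^{t}\si_{2}$ and $\beta_{2}=\Delta^{t}$, so $\close{\beta_{2}}=T_{3,3t}$ is known by the inductive hypothesis (Figure~\ref{fig:Kh-torus-knots}, centre). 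The $E_{1}$ page (\ref{eqn:braid-E1}) then takes the shape
\[
E_{1}(\beta_{0})\;\cong\;\Khred(T_{3,3t})[-1,1]\;\oplus\;\Khred(R_{1})[\ast]\;\oplus\;\Khred(R_{2})[\ast],
\]
where $R_{1}$ is the unoriented resolution of the final $\si_{1}$ in $\close{\Delta^{t}\si_{2}\si_{1}}$ and $R_{2}$ is the unoriented resolution of the $\si_{2}$ in $\close{\Delta^{t}\si_{2}}$.

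Next I would identify $R_{1}$ and $R_{2}$. An unoriented resolution of a generator in a braid closure inserts a cap--cup, and a short isotopy --- unwinding the plat-closed full twist $\Delta^{t}$ in the standard way --- should show that each of $R_{1}$, $R_{2}$ is, up to split unknot summands (which only contribute grading shifts), a two-strand torus link $T_{2,\ast}$ with $\ast$ linear in $t$. The reduced Khovanov homology of $T_{2,\ast}$ over $\bF$ is thin and explicit, so the remaining bookkeeping is to compute the constants $c_{i}=n_{-}(R_{i})$ for orientations compatible with $\close{\beta_{i}}$ and substitute them into the shifts of (\ref{eqn:braid-E1}); this should be recorded separately according to the parity (or residue mod a small number) of $t$ so that the shifts match Figure~\ref{fig:Kh-torus-knots} (right).

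With the $E_{1}$ page in hand, I would pin down the homology by a two-sided squeeze. The $E_{1}$ differential is lower triangular and raises $\delta$ by one while fixing $q$ (Section~\ref{sub:iterate}), so almost all of its components vanish for degree reasons, and one locates the surviving component(s), and any higher differential, as follows. From below, the graded Euler characteristic of the outcome must equal the Jones polynomial of $T_{3,3t+1}$, which forces a lower bound on the amount of cancellation. From above, Turner's spectral sequence (Theorem~\ref{thm:turner}) applied to $T_{3,3t+1}$ --- a knot, so $\rk\Khred'=2^{0}=1$ --- forces $\Khred(T_{3,3t+1})$ to collapse to rank one under the Turner pairings, which rigidly constrains how much homology can survive on each diagonal. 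Matching these two constraints against the candidate answer in Figure~\ref{fig:Kh-torus-knots} (right) determines $\Khred(T_{3,3t+1})$ up to the indeterminate summand of Figure~\ref{fig:tetris}, exactly as in Turner's computation~\cite{Turner2008}; in particular the staircase, hence the homological width, is insensitive to the ambiguity (compare Remark~\ref{rmk:tetris}).

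The step I expect to be the main obstacle is this last squeeze: showing that the iterated-mapping-cone differentials are \emph{as large as} the Jones polynomial and Turner-rank constraints allow --- that no spurious homology survives --- and in particular controlling the interplay of the two spectral sequences simultaneously (the mapping-cone differentials and Turner's pairings), since a priori a generator might participate nontrivially in both. Getting the grading shifts and the constants $c_{i}$ exactly right in each residue class of $t$ is the other delicate point. Once Claim~\ref{cl:2} is established, Claim~\ref{cl:1} and the remaining step (from $T_{3,3t+1}$ to $T_{3,3t+2}$) proceed by the same mechanism, completing the induction underlying Proposition~\ref{prp:torus}.
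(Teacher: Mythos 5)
Your overall strategy is exactly the paper's: run the iterated mapping cone of Section \ref{sub:iterate} on the last two crossings of $(\si_2\si_1)^{3t+1}$ to get $E_1$ in terms of $\Khred(T_{3,3t})$ plus a few new generators, then squeeze with Turner's spectral sequence (for a knot, $\rk\Khred'(T_{3,3t+1})=1$, so everything else must pair). That squeeze is not the "main obstacle" you fear: once the $E_1$ page is correct there are only three new generators, the mapping-cone differential raises $\delta$ by one and fixes $q$, and the unique way the Turner pairings can collapse everything forces a non-trivial differential $\bF^2\to\bF^2$ whose two possible ranks correspond precisely to the two possibilities for the indeterminate summand of Figure \ref{fig:tetris} --- so the answer is determined either way.

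The genuine error is your identification of the unoriented resolutions. They are not two-strand torus links: for Claim \ref{cl:2} one finds $R_1\simeq U$ and $R_2\simeq U\sqcup U$ (with $c_1=c_2=4t$ recording the many --- but removable --- crossings of the diagrams). This is load-bearing, not bookkeeping: the argument works only because $\Khred(R_1)\oplus\Khred(R_2)$ contributes exactly $1+2=3$ new generators independent of $t$, landing in the two bigradings where Figure \ref{fig:Kh-torus-knots} shows $T_{3,3t+1}$ differing from $T_{3,3t}[-1,1]$. If the $R_i$ were $T_{2,\ast}$ with $\ast$ linear in $t$, the number of candidate generators would grow with $t$ and the Turner constraint alone could not pin down the answer (nor would the result match the claimed figure). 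Also note that your "from below" constraint via the Jones polynomial is vacuous: the mapping-cone differentials shift $u=\delta+q$ by one, so the graded Euler characteristic of $E_1$ already equals that of $E_\infty$ automatically and imposes no bound on cancellation; all the rigidity comes from Turner's spectral sequence, as in the paper.
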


\begin{claim}\label{cl:3}If the result holds for $T_{3,3t+1}$ then it holds for $T_{3,3t+2}=T_{3,3(t+1)-1}$.\end{claim}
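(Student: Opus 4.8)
The plan is to run the iterated mapping cone of Section \ref{sub:iterate} on a positive braid presentation of $T_{3,3t+2}$ that exhibits it as $\overline{\beta}$ where $\beta$ differs from a braid whose closure is $T_{3,3t+1}$ by the addition of distinguished crossings. Concretely, write $T_{3,3t+1}$ as the closure of $\Delta^t(\sigma_2\sigma_1)$ and $T_{3,3t+2}$ as the closure of $\Delta^t(\sigma_2\sigma_1)^2 = \Delta^t\sigma_2\sigma_1\sigma_2\sigma_1$; resolving the $n$ extra positive crossings (here the extra $\sigma_2\sigma_1$, or a suitable choice of two crossings as in the schematic of Figure \ref{fig:torus-scheme}) produces a two- or three-step filtered complex whose $E_1$ page, via formula (\ref{eqn:braid-E1}), is built from $\Khred(T_{3,3t+1})$ (known by hypothesis) together with the reduced Khovanov homology of the intermediate resolutions $R_i$. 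Those $R_i$ are obtained by smoothing a crossing of a three-braid closure, so each is a two-bridge link (a connected sum / cabling of unknots and Hopf links, or a $(2,k)$-torus link), whose reduced Khovanov homology is thin and completely known; one then reads off the bigraded ranks on the $E_1$ page after applying the grading shifts $[-\tfrac12(c_i-1+i),\tfrac12(3c_i+1+i)]$, where the $c_i = n_-(R_i)$ are computed from the chosen orientations.

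First I would fix the braid word and the order on the distinguished crossings, draw the resolutions explicitly, and tabulate each $c_i$; this is the bookkeeping that pins down the diagonal on which each summand of $E_1(\beta)$ sits. Next I would observe that, by the general feature of the construction, every differential in the iterated mapping cone (on every page) raises $\delta$ by one and fixes $q$, so the spectral sequence can only cancel pairs of generators lying on $\delta$-adjacent, $q$-equal lattice points; combined with Turner's spectral sequence (Theorem \ref{thm:turner}), whose pairings run along $\delta+q$-diagonals and whose total rank on each such diagonal is forced by the linking-number count, the two constraints together leave only finitely many possibilities for $E_\infty = \Khred(T_{3,3t+2})$. Then I would check that the candidate answer in the third panel of Figure \ref{fig:Kh-torus-knots} is the unique pattern consistent with both: the "staircase" of rank-one groups is rigid because those generators are forced to survive (they pair off under Turner's differential with each other in the prescribed way and have no room to cancel under the mapping-cone differential), while the extra freedom is exactly absorbed into the $t$ indeterminate summands of the shape in Figure \ref{fig:tetris}. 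Finally I would note the off-by-one in the count of indeterminate summands recorded in the caption of Figure \ref{fig:Kh-torus-knots} — that $T_{3,3t+2} = T_{3,3(t+1)-1}$ should have $(t+1)-1 = t$ of them — and confirm it falls out of the computation, closing the induction.

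The main obstacle I expect is not any single ``hard'' estimate but controlling the mapping-cone differentials precisely enough: a priori the $E_1 \to E_\infty$ collapse could kill some of the staircase generators, so I need an independent lower bound forcing their survival. This is exactly where Turner's spectral sequence does the real work — its rank formula gives a floor on $\rk\bigoplus_{\delta+q=n}\Khred$ for each $n$, and matching that floor against the finitely many cancellation patterns allowed by the $(\text{+}1,0)$-differential rules out everything except the claimed figure (up to the indeterminate summand, which by Remark \ref{rmk:tetris} does not affect the homological width and so may be left undetermined). A secondary nuisance is the sign/orientation conventions entering the constants $c_i$ and hence the grading shifts; getting these right is routine but error-prone, and I would cross-check the outcome against the known case $t=1$, i.e.\ that the procedure reproduces $\Khred(T_{3,5})$ consistently with $\Khred(T_{3,4})$ in Figure \ref{fig:base-cases}, before asserting the general step.
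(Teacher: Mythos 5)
Your proposal follows essentially the same route as the paper: iterate the mapping cone over the two extra crossings of $(\si_2\si_1)$, read off $E_1((\si_2\si_1)^{3t+2})$ from the inductive hypothesis plus the resolutions $R_i$, and use Turner's spectral sequence to force the survival pattern. The only detail left to execute is the one that makes this case easy — both $R_i$ are trivial knots with $c_1=4t+2$, $c_2=4t+1$, so exactly two new generators appear, and since $\Khred'(T_{3,3t+2})\cong\bF$ they must pair with each other under Turner's differential, forcing the single possible mapping-cone differential to vanish.
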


The strategy to prove each claim is identical: for appropriately chosen $q$, iterate the skein exact sequence as summarized in Figure \ref{fig:torus-scheme} to obtain  $E_1((\si_2\si_1)^{q+1})$ as described in Section \ref{sub:iterate}. In each case this expresses $\Khred(T_{3,q+1})$ in terms of $\Khred(T_{3,q})$ together with a collection of possible new generators. With these possible new generators in hand, the structure of $\Khred'(T_{3,q+1})$ forces the behaviour of the differentials to give the result in each case. 

\begin{figure}[ht!]
\begin{center}
\xymatrix@C=65pt@R=0pt{
	&
	{\raisebox{0pt}{\includegraphics[scale=0.4]{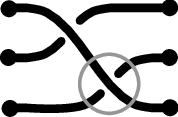}}}\ar@/^1pc/[dr] 
	&&\\
	{\labellist\small
	\pinlabel $R_1$ at 330 431
	\endlabellist
	\raisebox{0pt}{\includegraphics[scale=0.4]{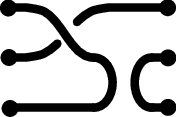}}}\ar@/^1pc/[ur]^-{c_1}
	&&
	{\raisebox{0pt}{\includegraphics[scale=0.4]{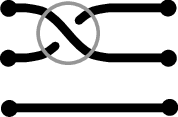}}}\ar@/^1pc/[dr]
	 &\\
	&
	{\labellist\small
	\pinlabel $R_2$ at 323 431
	\endlabellist\raisebox{0pt}{\includegraphics[scale=0.4]{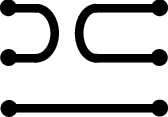}}}\ar@/^1pc/[ur]^-{c_2}
	&&
	{\raisebox{0pt}{\includegraphics[scale=0.4]{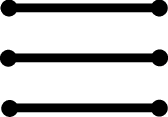}}}
	& \\
	}
\end{center}
\caption{Schematic for proving the claims: the constants $c_1$ and $c_2$ count the number of negative crossings -- relevant to the grading shifts -- in the unoriented resolution $R_1$ and $R_2$ at each step to construct $E_1((\si_2\si_1)^{q+1})$.}
\label{fig:torus-scheme}
\end{figure}

\begin{figure}[ht!]
\begin{center}
\labellist\small
	\pinlabel $1$ at 324 413
	
	\pinlabel $1$ at 359 484
	\pinlabel $1$ at 359 522
	
	\pinlabel $2$ at 359 557
	
	\pinlabel $1$ at 396 522
	
	\tiny
	\pinlabel $\cdots$ at 330 370
	\pinlabel \rotatebox{-90}{$-2t$} at 359 375
	\pinlabel \rotatebox{-90}{$1-2t$} at 390 365
	
	\pinlabel $\vdots$ at 290 530
	\pinlabel $6t$ at 290 557
	\endlabellist
\raisebox{0pt}{\includegraphics[scale=0.4]{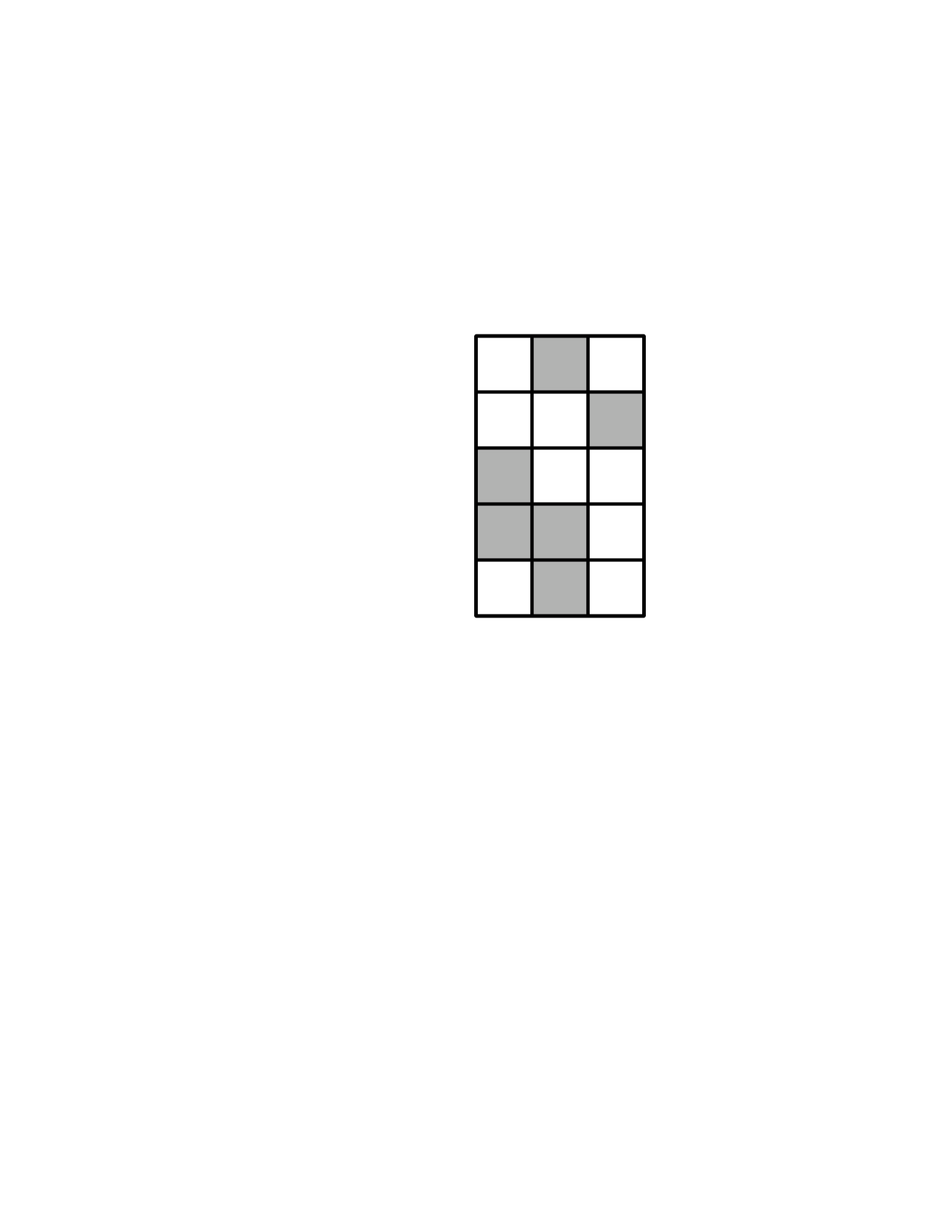}}
\qquad\qquad\qquad
\labellist\small
	\pinlabel $1$ at 324 413
	
	\pinlabel $1$ at 359 484
	\pinlabel $1$ at 359 522
	
	\pinlabel $2$ at 359 557
	
	\pinlabel $1$ at 396 522
	
	\pinlabel $2$ at 396 557
	
	\pinlabel $1$ at 359 595
	
	\tiny
	\pinlabel $\cdots$ at 330 370
	\pinlabel \rotatebox{-90}{$-1-2t$} at 359 357
	\pinlabel \rotatebox{-90}{$-2t$} at 390 370
	
	\pinlabel $\vdots$ at 275 530
	\pinlabel $6t+1$ at 275 557
	\pinlabel $6t+2$ at 275 593
	\endlabellist
\raisebox{0pt}{\includegraphics[scale=0.4]{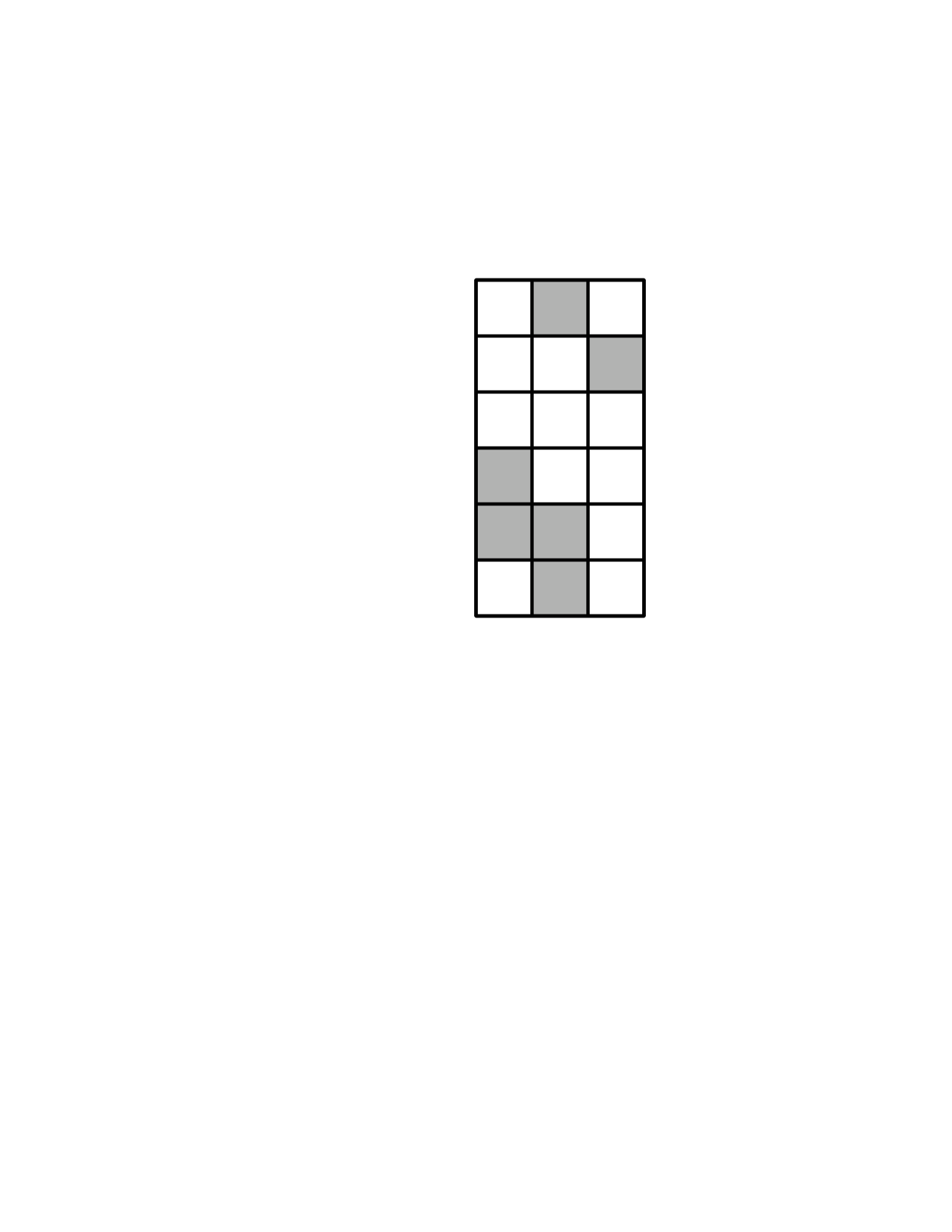}}
\qquad\qquad\qquad
\labellist\small
	\pinlabel $1$ at 324 413
	
	\pinlabel $1$ at 359 484
	\pinlabel $1$ at 359 522
	
	\pinlabel $1$ at 395 593
	\pinlabel $1$ at 395 630
	
	\tiny
	\pinlabel $\cdots$ at 330 370
	\pinlabel \rotatebox{-90}{$-2-2t$} at 359 357
	\pinlabel \rotatebox{-90}{$-1-2t$} at 390 357
	
	\pinlabel $\vdots$ at 275 530
	\pinlabel $6t+2$ at 275 557
	\pinlabel $6t+3$ at 275 593
	\pinlabel $6t+4$ at 275 630

	\endlabellist
\raisebox{0pt}{\includegraphics[scale=0.4]{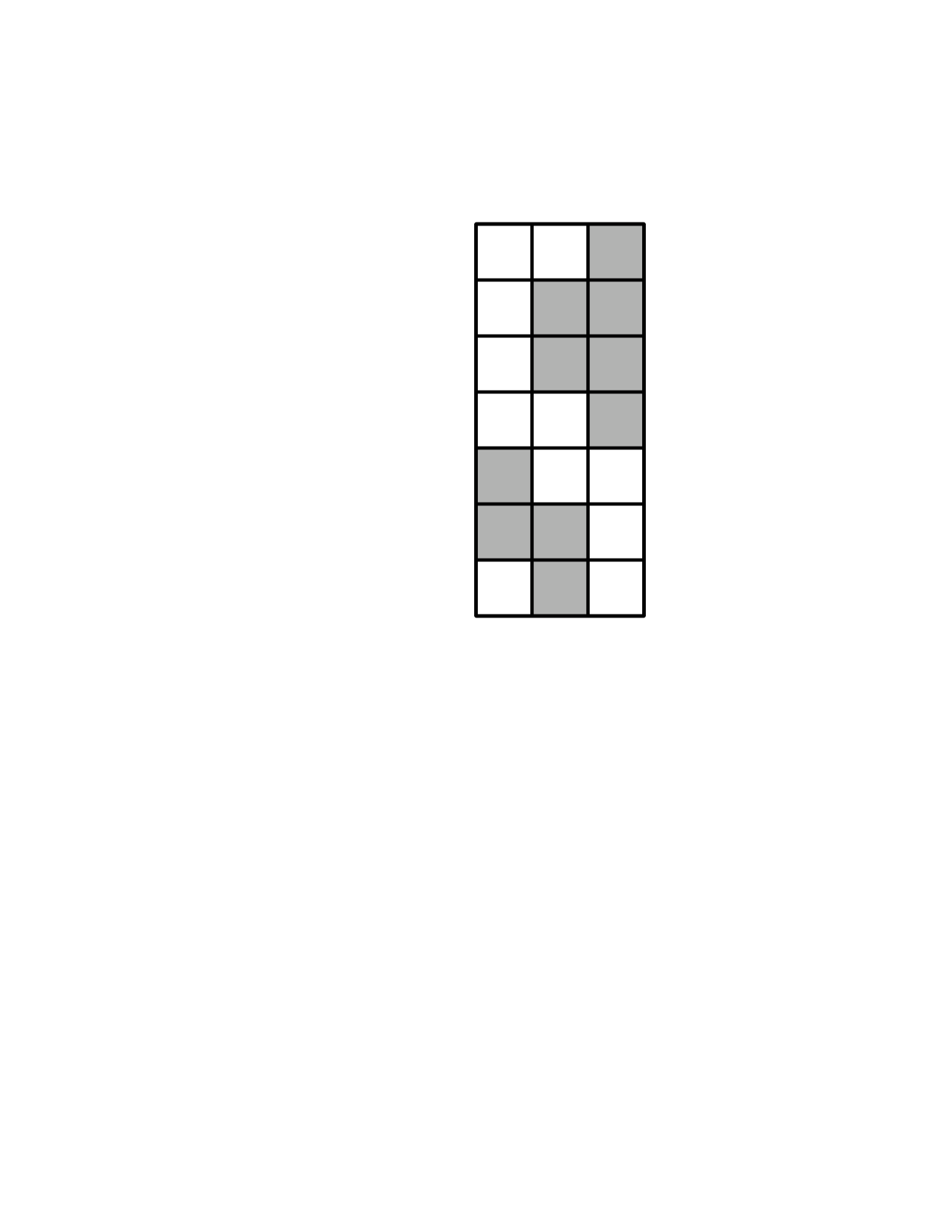}}
\end{center}
\vspace*{20pt}
\caption{Proving Claims \ref{cl:1}, \ref{cl:2} and \ref{cl:3} from left to right: two iterations of the long exact sequence provide a collection of possible new generators (these have been shaded in each case) contributing to $E_1((\si_2\si_1)^{q+1})$ calculating $\Khred(T_{3,q+1})$. Shaded but otherwise unmarked lattice points correspond to indeterminate summands, as usual.}
\label{fig:Kh-torus-claims}
\end{figure}

\begin{proof}[Proof of Claim \ref{cl:1}]
For $t>1$ we have that $c_1=c_2=4t-1$ where the unoriented resolutions are the links $R_1\simeq U\sqcup U$ and $R_2\simeq U$. As a result, the relevant grading shift on $\Khred(R_i)$ from (\ref{eqn:braid-SS}) is 
\[\textstyle[-\frac{1}{2}(c_i+i-1),\frac{1}{2}(3c_i+2+i-1)]=[-2t+1-\frac{i}{2},6t-1+\frac{i}{2}]\]
for $i=1,2$. Now $\Khred(T_{3,3t})$  is computed from $E_1((\si_2\si_1)^{3t})$, which according to (\ref{eqn:braid-E1}) is described by 
\begin{equation*}
 \textstyle \Khred(T_{3,3t-1})[-1,1]\oplus\left(\Khred(U)[-2t,6t] \oplus \Khred(U\sqcup U)[-2t+\half,6t-\half]\right).
\end{equation*}
Hence we need only consider 3 possible new generators added to $\Khred(T_{3,3t-1})[-1,1]$ in order to calculate $\Khred(T_{3,3t})$; the relevant portion of this calculation is illustrated in Figure \ref{fig:Kh-torus-claims}. Since  the possible differentials raise $\delta$-grading by 1 and fix the $q$-grading, there is only one possible non-trivial differential to consider. 

Now notice that $\rk\Khred'(T_{3,3t})=4$ as $T_{3,3t}$ is a three component link. Moreover, applying Theorem \ref{thm:turner} we see that the the 3 new generators added to $\delta+q=4t$ must survive in $\Khred'(T_{3,3t})$ (along with the generator in $\delta+q=0$). Since the remaining generators admit a unique pairing corresponding to the higher differentials calculating $\Khred'(T_{3,3t})$, we conclude that the possible differential under consideration must be zero, and 
\begin{align*}\textstyle\Khred(T_{3,3t}) 
 & \cong \textstyle \Khred(T_{3,3t-1})[-1,1]\oplus\Khred(U)[-2t,6t] \oplus \Khred(U\sqcup U)[-2t+\half,6t-\half]
\end{align*}
establishing the desired result.
\end{proof}

\begin{proof}[Proof of Claim \ref{cl:2}]
In this case we have that $c_i=4t$ and the unoriented resolutions are $R_1\simeq U$ and $R_2\simeq U\sqcup U$ respectively. The relevant grading shift on $\Khred(R_i)$ from (\ref{eqn:braid-SS}) is 
\[\textstyle[-\frac{1}{2}(c_i+i-1),\frac{1}{2}(3c_i+2+i-1)]=[-2t+\half-\frac{i}{2},6t+\half+\frac{i}{2}]\] for $i=1,2$.
As a result $\Khred(T_{3,3t+1})$  is computed from $E_1((\si_2\si_1)^{3t+1})$, which according to (\ref{eqn:braid-E1}) is described by
\begin{equation*}
 \textstyle \Khred(T_{3,3t})[-1,1]\oplus\left(\Khred(U\sqcup U)[-2t-\half,6t+\frac{3}{2}]\oplus \Khred(U)[-2t,6t+1]\right).
\end{equation*}
The relevant portion of this calculation, again featuring 3 new possible generators, is shown in Figure \ref{fig:Kh-torus-claims}. In this case, the differentials are necessarily non-trivial. Indeed, since $\Khred'(T_{3,3t+1})\cong\bF$ supported in $\delta+q=0$, all other generators of $\Khred(T_{3,3t+1})$ (in particular, those illustrated in Figure \ref{fig:Kh-torus-claims}) must pair according to higher differentials computing $\Khred'(T_{3,3t+1})$. As a result, there must be a non-trivial differential corresponding to $\bF^2\to \bF^2$ in Figure \ref{fig:Kh-torus-claims}, however this is precisely the setting wherein the indeterminate summand plays a role. That is, we note that there are two possibilities for such a pairing (as in Figure \ref{fig:tetris}), and hence two possibilities for the rank of the differential in question. Regardless of which occurs however, we obtain the result as claimed. 
\end{proof}

\begin{proof}[Proof of Claim \ref{cl:3}] Here $c_1=4t+2$ while $c_2=4t+1$, however the resolutions $R_i$ are trivial knots in each case. As a result we have only two new possible generators to consider in computing $\Khred(T_{3,3t+2})$ using $E_1((\si_2\si_1)^{3t+2})$:
\begin{equation*}
 \textstyle \Khred(T_{3,3t+1})[-1,1]\oplus\left(\Khred(U)[-2t-1,6t+3]\oplus \Khred(U)[-2t-1,6t+4]\right).
\end{equation*}
This is again summarized in Figure \ref{fig:Kh-torus-claims}. Notice that $\Khred'(T_{3,3t+2})\cong\bF$ supported in $\delta+q=0$. As a result, the new generators must cancel in $\Khred'(T_{3,3t+2})$, and the only way this can occur is for the new generators to pair with each other. As a result, the only possible non-trivial differential in this case must be zero, so that $\Khred(T_{3,3t+2})\cong E_1((\si_2\si_1)^{3t+2})$ described by 
\begin{equation*} \textstyle \Khred(T_{3,3t+1})[-1,1]\oplus\Khred(U)[-2t-1,6t+3]\oplus \Khred(U)[-2t-1,6t+4].\end{equation*} Consulting Figure \ref{fig:Kh-torus-knots} (replacing $t$ with $t+1$ so that $T_{3,3(t+1)-1}=T_{3,3t+2}$), this is the desired result. 
\end{proof}

\section{Homological width}\label{sec:width}

To complete the proof of Theorem \ref{thm:twist} it remains to establish the second part of Theorem \ref{thm:combo}. 

\begin{definition}Given a strong inversion on a knot $K$, and preferred representative for the associated quotient tangle $(B^3,\tau)$, define $w_K=\min_{r\in\bQ}\left\{w(\tau(r))\right\}$. \end{definition}

It is established in \cite[Section 4]{Watson2008} that this value is both well-defined and calculable.  Our goal then is to determine $w_{K_t}$ for the twist knots $K_t$ described in Figure \ref{fig:twist-knots}.

\begin{proposition}\label{prp:branch-width} $w_{K_t}= t+1$ for all $t\ge0$.
\end{proposition}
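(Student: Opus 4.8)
The plan is to establish the two inequalities $w_{K_t}\le t+1$ and $w_{K_t}\ge t+1$ separately, using the braid description $\beta_{t,n}=\si_1^{\epsilon+n}(\si_2\si_1^3\si_2)\Delta^t$ from Proposition \ref{prp:framed-tangle} (where $\epsilon\in\{2,6\}$ according to the parity of $t$) together with the Khovanov homology of torus links $T_{3,q}=\close{\Delta^{q/3}}$ computed in Proposition \ref{prp:torus}. For the upper bound, it suffices to exhibit a single rational slope $r$ with $w(\tau_t(r))\le t+1$; the natural candidate is a slope for which the closure of $\beta_{t,n}$ simplifies, so that $\tau_t(r)$ becomes (isotopic to) one of the torus links $T_{3,3t-1}$, $T_{3,3t}$ or $T_{3,3t+1}$, whose widths are read off from Figure \ref{fig:Kh-torus-knots} to be exactly $t+1$. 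Concretely I would compute, using the central element $\Delta^2$, what value of $n$ makes $\si_1^{\epsilon+n}(\si_2\si_1^3\si_2)\Delta^t$ reduce via braid relations and destabilization to a three-strand torus braid, and then the torus-link computation gives the bound.

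For the lower bound I would iterate the mapping cone of Section \ref{sub:iterate} exactly as in the proof of Proposition \ref{prp:torus}: write $\beta_{t,n}=\gamma\,\Delta^t$ where $\gamma=\si_1^{\epsilon+n}\si_2\si_1^3\si_2$ has boundedly many crossings independent of $t$, resolve those finitely many crossings of $\gamma$ (keeping $\Delta^t$ intact), and obtain an $E_1$ page built from $\Khred(T_{3,3t})=\Khred(\close{\Delta^t})$ shifted in bigrading, together with $\Khred$ of the various partial resolutions $R_i$, each of which is a closure of a three-braid of the form (short word)$\cdot\Delta^t$ — hence again a torus link or a connect/split sum governed by Proposition \ref{prp:torus}. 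Since $\Khred(T_{3,3t})$ already occupies $t+1$ distinct $\delta$-gradings forming a "staircase," and since the differentials in the iterated mapping cone raise $\delta$ by exactly one while fixing $q$, I would argue that these $t+1$ diagonals cannot all be killed: a differential out of the staircase generator in $\delta$-grading $k$ can only hit something in $\delta$-grading $k+1$ with the same $q$, and the staircase shape of $\Khred(T_{3,3t})$ (one generator per relevant $q$, with the $q$-values strictly increasing along the $\delta$-direction) obstructs enough cancellation. Turner's spectral sequence (Theorem \ref{thm:turner}) provides the needed rigidity: it pins down the ranks $\rk\bigoplus_{\delta+q=m}\Khred(\tau_t(r))$ from below in terms of linking numbers of the components of $\tau_t(r)$, forcing survivors in the extreme diagonals and thereby forcing $w(\tau_t(r))\ge t+1$ uniformly in $r$.

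The same calculation must be run for \emph{every} rational $r$, not just integer $n$; here I would invoke the stability results from \cite[Section 4]{Watson2008} (cited in the paragraph after the definition of $w_K$, guaranteeing $w_{K}$ is well-defined and calculable) to reduce the rational case to finitely many integer surgeries, or more precisely to the observation that rational closures $\tau_t(\pq)$ are obtained from the integer-closure family by inserting a rational tangle whose effect on the braid is again a bounded-length word times $\Delta^t$ — so the $E_1$-page argument above applies verbatim with $\gamma$ replaced by a slightly longer but still $t$-independent braid word. The width lower bound of $t+1$ then holds for all $r$, and combined with the upper bound this gives $w_{K_t}=t+1$.

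\textbf{Main obstacle.} I expect the crux to be the lower bound: controlling the iterated mapping cone well enough to guarantee that \emph{no} choice of $r$ collapses the $t+1$-diagonal staircase of $\Khred(\close{\Delta^t})$. The partial resolutions $R_i$ produce extra generators that could in principle pair off with the staircase generators across adjacent $\delta$-gradings, and ruling this out requires a careful bookkeeping of the $q$-gradings (using that the staircase of the full twist is "thin" — at most rank one or two per $q$-value — and is separated in $q$ from the bulk of the new generators) together with the Turner spectral sequence constraint to certify which generators must survive. Getting this argument to be uniform in $t$, rather than checking small cases, is the technical heart of the proof.
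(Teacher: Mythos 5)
Your overall toolkit is the right one --- the iterated mapping cone over the full twist $\Delta^t$, the torus-link computation of Proposition \ref{prp:torus}, and Turner's spectral sequence as the rigidity mechanism --- and this is exactly how the paper computes the homology of the branch sets it needs. But there are two genuine gaps in how you propose to get from those computations to the statement $w_{K_t}=t+1$, which is a minimum over \emph{all} rational slopes.

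First, the quantifier over integer slopes. Your lower-bound argument runs the mapping cone on the positive braid $\si_1^{\epsilon+n}(\si_2\si_1^3\si_2)\Delta^t$, but this braid is positive only for $n\ge -\epsilon$; for more negative $n$ the setup you describe does not apply, and for large positive $n$ the number of extra $E_1$ generators grows with $n$, so the claim that Turner's spectral sequence ``forces survivors in the extreme diagonals'' uniformly in $n$ is not justified as stated. The paper sidesteps both issues: it computes $\Khred(\tau_t(\ell))$ and $\Khred(\tau_t(\ell+1))$ for a single well-chosen framing $\ell$ (with $\epsilon+\ell=1$, the minimal positive case), finds widths $t+1$ and $t+2$ respectively, and then invokes \cite[Lemma 4.10 and Lemma 4.20]{Watson2008}, which sandwich $w(\tau_t(n))$ between these two values for \emph{every} integer $n$. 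This structural stability result is what replaces your case-by-case analysis, and it is also what supplies the upper bound (you do not need to hunt for a slope where the branch set becomes a torus link --- it never does, and ``destabilization'' to a three-strand torus braid cannot work since the closures are already three-braids; $\tau_t(\ell)=\overline{\si_1\si_2\si_1^3\si_2\Delta^t}$ already has width exactly $t+1$).

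Second, the passage to rational slopes. Your proposed reduction --- that $\tau_t(p/q)$ is the closure of a bounded-length braid word times $\Delta^t$ --- is false: the rational closures are built from continued fraction expansions (Figure \ref{fig:fraction}), are not three-braid closures, and the length of the expansion is unbounded over $\bQ$. The paper's actual mechanism is the genericity machinery of \cite{Watson2008}: one verifies that $(B^3,\tau_t)$ is \emph{generic} (Corollary \ref{crl:generic}), which requires not just the widths of $\tau_t(\ell)$ and $\tau_t(\ell+1)$ but also a specific condition on the quantum grading of the new generator in the $(t+2)^{\rm nd}$ diagonal (the ``expansion long form'' statement in Proposition \ref{prp:ell-plus-one}); then \cite[Theorem 5.16]{Watson2008} converts the integer minimum into a lower bound for all non-trivial rational slopes. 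Without this (or an equivalent argument), your proof establishes at best $\min_n w(\tau_t(n))=t+1$ over integers, not $w_{K_t}=t+1$.
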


In the present setting $w_{K_t}$ is completely determined by the reduced Khovanov homology of two branch sets corresponding to integer surgeries (for any $t\ge0$). To see this, let \[\ell=\begin{cases}-1 & {\rm for\ } t {\rm \ odd} \\ -5 & {\rm for\ } t {\rm \ even,}\end{cases}\]  and consider the particular branch set $\tau_t(\ell)$ (so that $S^3_\ell(K_t)\cong\Br(S^3,\tau_t(\ell))$). Before turning to the proof of Proposition \ref{prp:branch-width}, we will need to calculate $\Khred(\tau_t(\ell))$. As in Section \ref{sub:twist}, $\Khred(\tau_t(\ell))$ will be expressed up to indeterminate summands. 

\subsection{$\ell$-framed surgery on twist knots}
The following proposition gives a useful special case of the iterated mapping cone construction of Section \ref{sub:twist}.  

\begin{proposition}\label{prp:iteration} Let $\beta=\beta_1\sigma_i^n\beta_2$ be a positive braid.  According to (\ref{eqn:braid-E1}), by applying the iterated mapping cone to the crossings $\sigma_i^n$,  $\Khred(\overline{\beta})$ may be computed by a spectral sequence with 
\[E_1(\beta_1\sigma_i^n\beta_2)\cong \Big(\Khred(\close{\beta_1\beta_2})\oplus\big(\bigoplus_{q=0}^{n-1}\Khred(R)[\textstyle-\half(c-1),\half(3c+1+2q)]\big)\Big)[\textstyle-\frac{n}{2},\frac{n}{2}]\] 
where $R$ is the link obtained by replacing the entire twist region corresponding to $\si_i^n$ with the unoriented resolution $\one$ and $c=n_-(R)$. In particular, the links $R_i$ of Section \ref{sub:iterate} differ from $R$ by a series of Reidemeister 1 moves and $c_i=c+n-i$. \end{proposition}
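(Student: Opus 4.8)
The plan is to specialize the general iterated mapping cone construction of Section~\ref{sub:iterate} to the situation where the $n$ distinguished crossings are precisely the $n$ crossings of a single twist region $\si_i^n$ sitting inside a positive braid word $\beta=\beta_1\si_i^n\beta_2$. First I would take the distinguished crossings in the natural order along the twist region, so that $\beta_k$ (in the notation of Section~\ref{sub:iterate}) is obtained from $\beta$ by replacing the first $k$ of these crossings by the oriented resolution $\zero$; since $\si_i$ oriented-resolved is the identity braid, $\close{\beta_k}=\close{\beta_1\si_i^{n-k}\beta_2}$, and in particular $\close{\beta_n}=\close{\beta_1\beta_2}$. This identifies the first summand of (\ref{eqn:braid-E1}) as $\Khred(\close{\beta_1\beta_2})[-\frac{n}{2},\frac{n}{2}]$.

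Next I would analyze the resolutions $R_k$. By definition $R_k$ replaces the first $k-1$ distinguished crossings by $\zero$ and the $k$-th by the unoriented resolution $\one$; because all the resolved crossings lie in the same twist region, oriented-resolving the earlier ones just shortens the twist region while the unoriented resolution $\one$ at the $k$-th crossing caps it off in the same local way regardless of $k$. Hence each $R_k$ is isotopic, via a sequence of Reidemeister~1 moves removing the remaining $n-k$ curls of the shortened twist region, to the single link $R$ obtained by replacing the whole region $\si_i^n$ by $\one$. The key bookkeeping step is then the crossing count: a Reidemeister~1 move on a positive curl changes $n_-$ by $0$ but each oriented resolution $\zero$ performed before reaching $R_k$ removes crossings, so one computes $c_k=n_-(R_k)=c+n-k$ where $c=n_-(R)$, using the chosen compatible orientation; this is exactly the claimed relation $c_i=c+n-i$. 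I would spell this out carefully since sign/orientation conventions in (\ref{eqn:cone}) are the one genuinely error-prone point.

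Finally I would substitute $c_i=c+n-i$ into the grading shift $[-\frac{1}{2}(c_i-1+i),\frac{1}{2}(3c_i+1+i)]$ appearing in (\ref{eqn:braid-E1}): the $i$-dependence in the first slot cancels, giving $-\frac{1}{2}(c+n-1)$, and in the second slot $\frac{1}{2}(3(c+n-i)+1+i)=\frac{1}{2}(3c+3n+1-2i)$. Re-indexing by $q=n-i$ (so $q$ runs over $0,\dots,n-1$) turns this into $[-\frac{1}{2}(c+n-1),\frac{1}{2}(3c+1+2q+3n)]$, and pulling out the common shift $[-\frac{n}{2},\frac{n}{2}]$ leaves precisely $\Khred(R)[-\frac{1}{2}(c-1),\frac{1}{2}(3c+1+2q)]$ for each $q$, together with the first summand already identified. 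Assembling these gives the stated formula for $E_1(\beta_1\si_i^n\beta_2)$, and the spectral sequence converging to $\Khred(\close{\beta})$ is the one from Section~\ref{sub:iterate} applied to this choice of distinguished crossings, with all higher differentials inherited. The main obstacle I anticipate is purely organizational: making sure the Reidemeister~1 identifications of the $R_k$ with $R$ and the associated crossing count are stated with consistent orientation conventions so that the index shift $q=n-i$ comes out correctly; none of this is deep, but it must be done cleanly.
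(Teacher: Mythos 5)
Your approach is exactly the one the paper intends: the proposition is a direct specialization of the iterated mapping cone of Section~\ref{sub:iterate} to the $n$ crossings of the twist region, with $\close{\beta_n}=\close{\beta_1\beta_2}$, all $R_i$ identified with $R$ up to Reidemeister~1 moves, and $c_i=c+n-i$ substituted into (\ref{eqn:braid-E1}); the paper offers no further proof, and your final formula agrees with it. Two small corrections to your bookkeeping, though. First, the mechanism behind $c_i=c+n-i$ is not that positive curls contribute nothing: after the unoriented resolution $\one$ at the $i$-th crossing the two strands of the twist region become antiparallel, so the $n-i$ remaining crossings of the region turn into \emph{negative} kinks for the compatible orientation, contributing exactly the extra $n-i$ to $n_-(R_i)$; as you state it (``R1 on a positive curl changes $n_-$ by $0$'' and the earlier crossings are simply removed) one would conclude $c_i=c$, which is wrong. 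Second, the re-indexing $q=n-i$ gives second slot $\frac{1}{2}(3c+3n+1-2i)=\frac{1}{2}(3c+n+1+2q)$, not $\frac{1}{2}(3c+3n+1+2q)$; your final expression $[-\half(c-1),\half(3c+1+2q)]$ after extracting $[-\frac{n}{2},\frac{n}{2}]$ is nevertheless the correct one.
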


As before, this provides a candidate collection of generators that may be used to determine the homology by making use of the additional structure of Theorem \ref{thm:turner}. Now notice that the knot $\tau_t(\ell)$ is  the closure of the braid $\si_1\si_2\si_1^3\si_2\Delta^t$ (see Proposition \ref{prp:framed-tangle}). Using the result in Proposition \ref{prp:torus}, and a similar strategy of proof employing Proposition \ref{prp:iteration}, it is possible to determine the reduced Khovanov homology, up to indeterminate summands, for the branch set $\tau_t(\ell)$. 

\begin{proposition}\label{prp:branch-set-homology}The reduced Khovanov homology of the branch set $\tau_t(\ell)$ is described in Figure \ref{fig:Kh-torus-link} for $t\ge0$. \end{proposition}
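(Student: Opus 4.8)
\textbf{Proof proposal for Proposition \ref{prp:branch-set-homology}.}

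The plan is to mimic the proof of Proposition \ref{prp:torus}, running the iterated mapping cone of Proposition \ref{prp:iteration} on the braid word $\si_1\si_2\si_1^3\si_2\Delta^t$ and using the just-established computation of $\Khred(T_{3,q})$ as the input. First I would fix a convenient normal form: using centrality of $\Delta$ and the braid relation (exactly as in Proposition \ref{prp:framed-tangle}) rewrite $\si_1\si_2\si_1^3\si_2\Delta^t$ so that the ``extra'' crossings over and above a full-twist power $\Delta^{t}$ (or $\Delta^{t'}$ for a nearby $t'$) are collected into a single twist region $\si_1^m$ on which Proposition \ref{prp:iteration} can be applied. Concretely, $\si_2\si_1^3\si_2 = \si_2\si_1^2\si_2\cdot\si_1 = (\si_2\si_1)^3\si_1^{-1}\cdot\si_1 = \Delta$, so in fact $\tau_t(\ell)$ is the closure of $\si_1\Delta^{t+1}$; applying Proposition \ref{prp:iteration} to the single crossing $\si_1$ (that is, $n=1$) expresses $\Khred(\tau_t(\ell))$ via a two-step mapping cone with summands $\Khred(\overline{\Delta^{t+1}}) = \Khred(T_{3,3(t+1)})$ and $\Khred(R)$, where $R$ is obtained by replacing that $\si_1$ with the unoriented resolution $\one$. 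One checks that $R$ is (up to Reidemeister 1 moves) the closure of $\si_2\Delta^{t+1}$-type braid, which is itself a $(2,\cdot)$-torus link or a connected sum thereof whose reduced Khovanov homology is classical; in any case it is a link on which the induction hypothesis or a direct thin-homology computation applies.

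Next I would assemble the $E_1$ page from (\ref{eqn:braid-E1})/Proposition \ref{prp:iteration}, carefully tracking the grading shifts: compute $c = n_-(R)$ for a chosen orientation compatible with the unaffected strands, record the shift $[-\tfrac12(c-1),\tfrac12(3c+1)]$ on $\Khred(R)$ and the overall $[-\tfrac12,\tfrac12]$, and superimpose this on the shifted copy of $\Khred(T_{3,3(t+1)})$ taken from Figure \ref{fig:Kh-torus-knots}. This produces a short list of ``possible new generators'' sitting on one or two $\delta$-diagonals, exactly the situation in the proofs of Claims \ref{cl:1}--\ref{cl:3}. The single available mapping-cone differential raises $\delta$ by one and fixes $q$, so there is at most one nonzero differential to pin down. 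To pin it down I would invoke Turner's spectral sequence (Theorem \ref{thm:turner}): since $\tau_t(\ell)$ is a knot, $\rk\Khred'(\tau_t(\ell)) = 1$, supported in $\delta+q=0$, which forces every generator off that diagonal to be killed in $\Khred'$, and the only pairings consistent with the bidegrees $(1-i,i)$ of the Turner differentials determine whether the mapping-cone differential vanishes or not — and, as in Claim \ref{cl:2}, in the ambiguous case the two possibilities differ only by the indeterminate summand of Figure \ref{fig:tetris}, which is why the statement is ``up to indeterminate summands.''

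The main obstacle, as in Section \ref{sub:twist}, is bookkeeping rather than conceptual: getting the four sign/grading constants right ($c$, the two overall shifts, and the $\delta+q$ value of each candidate generator) so that the Turner-spectral-sequence parity argument genuinely has a unique solution, and separately handling the $t$ odd versus $t$ even cases, since $\ell$ (and hence the relevant power of $\si_1$, here absorbed into the rewriting above) was defined by cases. I would also need to double-check the base case $t=0$ directly — there $\tau_0(\ell)$ is the closure of $\si_1\Delta$, a small torus-type link whose reduced Khovanov homology can be read off from Figure \ref{fig:base-cases} — to anchor the induction. Granting these verifications, the output matches Figure \ref{fig:Kh-torus-link}, and in particular one reads off that $\Khred(\tau_t(\ell))$ occupies exactly $t+2$ diagonals, which is what will be needed (together with a complementary computation) to prove Proposition \ref{prp:branch-width}.
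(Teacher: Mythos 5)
Your overall architecture (iterated mapping cone from Proposition \ref{prp:iteration} fed into Turner's spectral sequence, with the torus-link computation of Proposition \ref{prp:torus} as input) is the right one and matches the paper's. But the braid algebra on which your reduction rests is false. You claim $\si_2\si_1^3\si_2=\si_2\si_1^2\si_2\cdot\si_1$, which would require $\si_1\si_2=\si_2\si_1$, and then $\si_2\si_1^2\si_2\si_1=\Delta$, which fails already on exponent sums ($5\neq 6$; the correct identity, used in Proposition \ref{prp:framed-tangle}, is $\Delta=\si_2\si_1^2\si_2\si_1^2$). Consequently $\tau_t(\ell)$ is \emph{not} the closure of $\si_1\Delta^{t+1}$: that closure is a two-component link (the underlying permutation is a transposition), whereas $\tau_t(\ell)$ is a knot — and the whole point of your appeal to Turner's spectral sequence is that $\rk\Khred'(\tau_t(\ell))=1$ because it is a knot. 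So the reduction you propose computes the wrong link.

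Even repairing the algebra, resolving a single crossing cannot work: the oriented resolution of the $\si_1$ in $\si_1\si_2\si_1^3\si_2\Delta^t$ is the closure of $\si_2\si_1^3\si_2\Delta^t$, which is not a torus link, so Proposition \ref{prp:torus} does not apply to it and you have no induction hypothesis to quote. The paper instead iterates through all six non-$\Delta^t$ crossings in three stages ($\si_1$, then $\si_1^3$, then $\si_2^2$), reducing the oriented resolution all the way to $\overline{\Delta^t}=T_{3,3t}$ and recording the unoriented resolutions $U$ (with $c=4t+2$), $H^+$ (with $c=4t$, three times) and $U\sqcup U$ (with $c=4t$, twice). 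This yields $E_1(\si_1\si_2\si_1^3\si_2\Delta^t)$ with eleven candidate new generators, and the pairing forced by $\Khred'(\tau_t(\ell))\cong\bF$ then pins down the differentials up to the indeterminate summand — several differentials, not the single one you anticipate. One further small correction: no $t$ odd/even case split is needed here, since $\ell$ was chosen precisely so that $\beta_{t,\ell}=\si_1\si_2\si_1^3\si_2\Delta^t$ uniformly in $t$.
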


\begin{figure}[ht!]
\begin{center}
\labellist\small
	\pinlabel $1$ at 324 413
	\pinlabel $1$ at 324 485
	\pinlabel $1$ at 324 522
	
	\pinlabel $1$ at 360 594
	\pinlabel $1$ at 360 630
		
	\pinlabel $1$ at 432 702	
	\pinlabel $1$ at 432 739	
		
	\pinlabel $1$ at 468 810	
	\pinlabel $1$ at 468 846

	\pinlabel $2$ at 468 879
	
	\pinlabel $1$ at 504 844

	\pinlabel \rotatebox{90}{$\underbrace{\phantom{aaaaaaaaaaaaaaaaaaaaaaaaaa}}$} at 530 666
	\pinlabel $\times t-1$ at 575 666
	\pinlabel \rotatebox{-45}{$\vdots$} at 400 671

	\tiny
	\pinlabel \rotatebox{-90}{$1-3t$} at 322 361
	\pinlabel \rotatebox{-90}{$2-3t$} at 357 361
	\pinlabel $\hdots$ at 396 365
	\pinlabel \rotatebox{-90}{$1-2t$} at 503 361
	
	\pinlabel $3t-1$ at 275 413
	\pinlabel $3t$ at 285 450
	\pinlabel $\vdots$ at 285 495
	
	\pinlabel $6t-1$ at 275 845
	\pinlabel $6t$ at 285 881
	
	\endlabellist
\raisebox{0pt}{\includegraphics[scale=0.4]{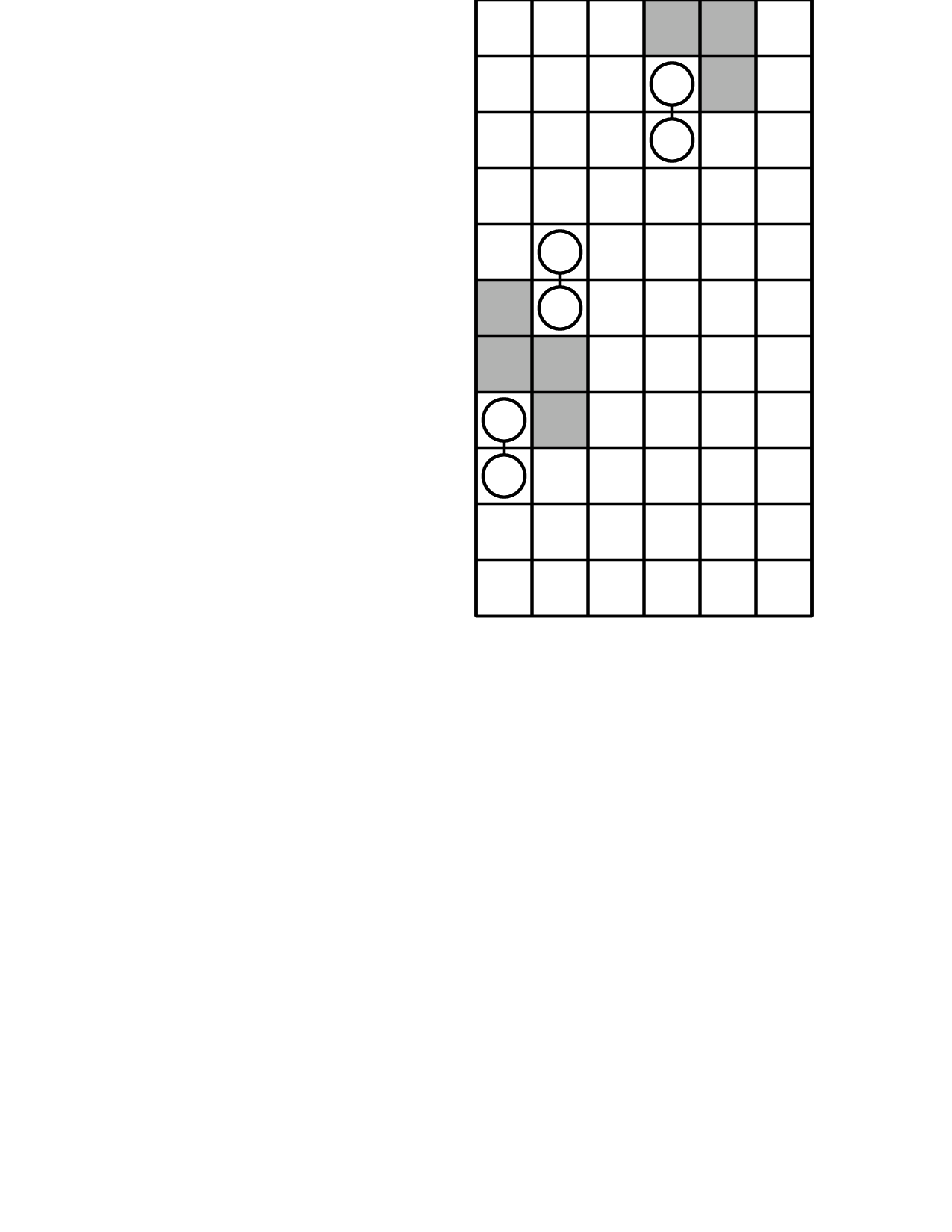}}
\qquad\qquad\qquad\qquad
\labellist\small
	\pinlabel $1$ at 324 413
	\pinlabel $1$ at 324 485
	\pinlabel $1$ at 324 522
	
	\pinlabel $1$ at 360 594
	\pinlabel $1$ at 360 630
		
	\pinlabel $1$ at 432 702	
	\pinlabel $1$ at 432 739	
		
	\pinlabel $1$ at 468 810	
	\pinlabel $1$ at 468 846
		
	\pinlabel $1$ at 504 917
	\pinlabel $1$ at 504 954
	\pinlabel $1$ at 504 990
	\pinlabel $1$ at 504 1027

	\pinlabel \rotatebox{90}{$\underbrace{\phantom{aaaaaaaaaaaiaaaaaaaaaaaaaaaaaataaaa}}$} at 528 720
	\pinlabel $\times t$ at 556 720

	\pinlabel \rotatebox{-45}{$\vdots$} at 400 671

	\tiny
	\pinlabel \rotatebox{-90}{$-2-3t$} at 322 358
	\pinlabel \rotatebox{-90}{$-1-3t$} at 357 358
	\pinlabel $\hdots$ at 396 365
	\pinlabel \rotatebox{-90}{$-2-2t$} at 503 358
	
	\pinlabel $3t+2$ at 275 413
	\pinlabel $3t+3$ at 275 450
	\pinlabel $\vdots$ at 278 495
	
	\pinlabel $6t+6$ at 275 990
	\pinlabel $6t+7$ at 275 1027

	\endlabellist
\raisebox{0pt}{\includegraphics[scale=0.4]{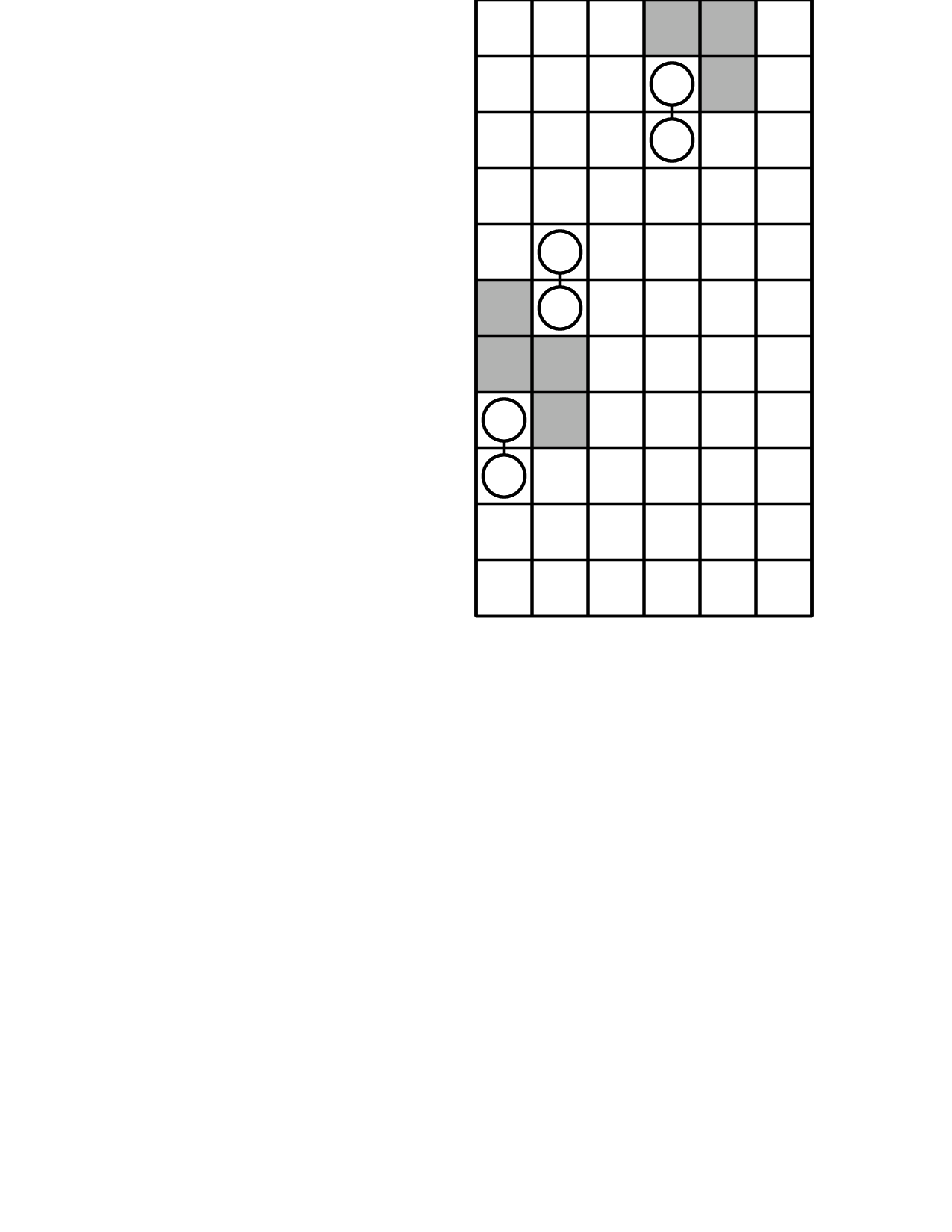}}
\end{center}
\vspace*{20pt}
\caption{The reduced Khovanov homology of the $(3,3t)$-torus link as computed in Proposition \ref{prp:torus} (left), and the reduced Khovanov homology of the branch set $\tau_t(\ell)$ (right).}
\label{fig:Kh-torus-link}
\end{figure}

\begin{proof} The strategy of proof is to iterate Proposition \ref{prp:iteration}, as in the schematic of Figure \ref{fig:main-scheme}, to the closure of the braid $\si_1\si_2\si_1^3\si_2\Delta^t$:
\[
\xymatrix@C=50pt@R=10pt{
{\underline{\si_1}\si_2\si_1^3\si_2\Delta^t}\ar[r]^{\text{oriented}}_{\text{resolution}} &  {\si_2\underline{\si_1^3}\si_2\Delta^t}\ar[r]^{\text{oriented}}_{\text{resolution}} & {\underline{\si_2^2}\Delta^t}\ar[r]^{\text{oriented}}_{\text{resolution}} & {\Delta^t}
}\]
It is straightforward to verify that the constants and unoriented resolutions are as claimed in Figure \ref{fig:main-scheme}. The relevant piece of the group corresponding to each of the following steps is summarized in Figure \ref{fig:proof-steps}.

\begin{figure}[ht!]
\begin{center}\quad
\xymatrix@C=-50pt@R=20pt{
	&
	{\labellist\small
	\pinlabel $\Delta^t$ at 68 2
	\endlabellist\raisebox{0pt}{\includegraphics[scale=0.4]{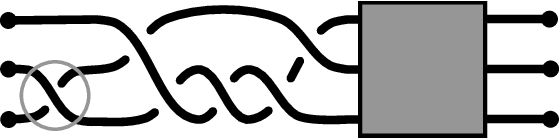}}}\ar@/^1pc/[dr] 
	&&&\\
	{\labellist\small
	\pinlabel $U$ at -150 -30
	\pinlabel $\Delta^t$ at 68 2
	\endlabellist\raisebox{0pt}{\includegraphics[scale=0.4]{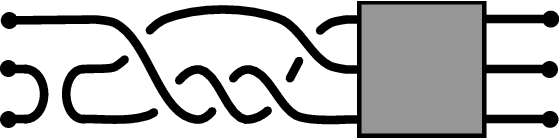}}}\ar@/^1pc/[ur]^-{c=4t+2}_{\times 1}
	&&
	{\labellist\small
	\pinlabel $\Delta^t$ at 69 3
	\endlabellist\raisebox{0pt}{\includegraphics[scale=0.4]{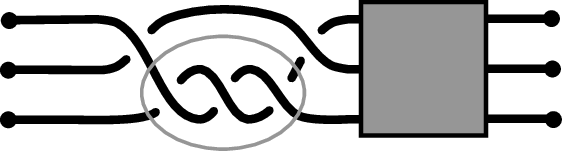}}}\ar@/^1pc/[dr]
	 &&\\
	&
	{\labellist\small
	\pinlabel $\Delta^t$ at 69 2
	\pinlabel $H^+$ at -160 -30
	\endlabellist\raisebox{0pt}{\includegraphics[scale=0.4]{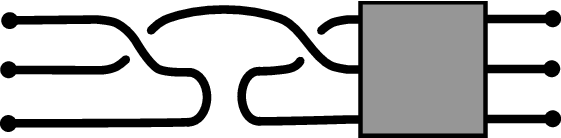}}}\ar@/^1pc/[ur]^-{c=4t}_{\times3}
	&&
	{\labellist\small
	\pinlabel $\Delta^t$ at 69 1
	\endlabellist\raisebox{0pt}{\includegraphics[scale=0.4]{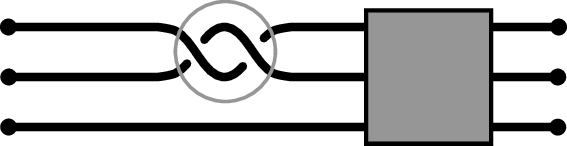}}}\ar@/^1pc/[dr]
	 &\\
	&&
	{\labellist\small
	\pinlabel $U\sqcup U$ at -173 -30
	\pinlabel $\Delta^t$ at 69 2
	\endlabellist\raisebox{0pt}{\includegraphics[scale=0.4]{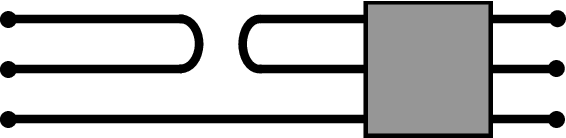}}}\ar@/^1pc/[ur]^-{c=4t}_{\times2}
	&&
	{\labellist\small
	\pinlabel $\Delta^t$ at 69 2
	\endlabellist\raisebox{0pt}{\includegraphics[scale=0.4]{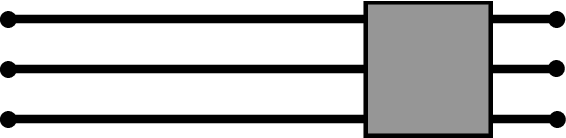}}}
	}
\end{center}
\caption{Schematic of the proof of Proposition \ref{prp:branch-set-homology}: at each step the number $\times n$ indicates the number of iterations of the long exact sequence using Proposition \ref{prp:iteration} to produce $E_1(\si_1\si_2\si_1^3\si_2\Delta^t)$ computing $\Khred(\tau_t(\ell))$.  Recall that $H^+$ denotes the positively clasped Hopf link, with $\Khred(H^+)$ as in Figure \ref{fig:small-homology}.}
\label{fig:main-scheme}
\end{figure}

\begin{figure}[ht!]
\begin{center}
\labellist\small
	\pinlabel $1$ at 324 413
	
	\pinlabel $1$ at 359 487
	\pinlabel $1$ at 359 522
	\pinlabel $2$ at 359 557
	
	\pinlabel $1$ at 395 522
	
	\pinlabel $1$ at 359 595
	\pinlabel $1$ at 359 631
	
	\pinlabel $1$ at 395 557
	\pinlabel $1$ at 395 595

	\tiny
	
	\endlabellist
\raisebox{0pt}{\includegraphics[scale=0.4]{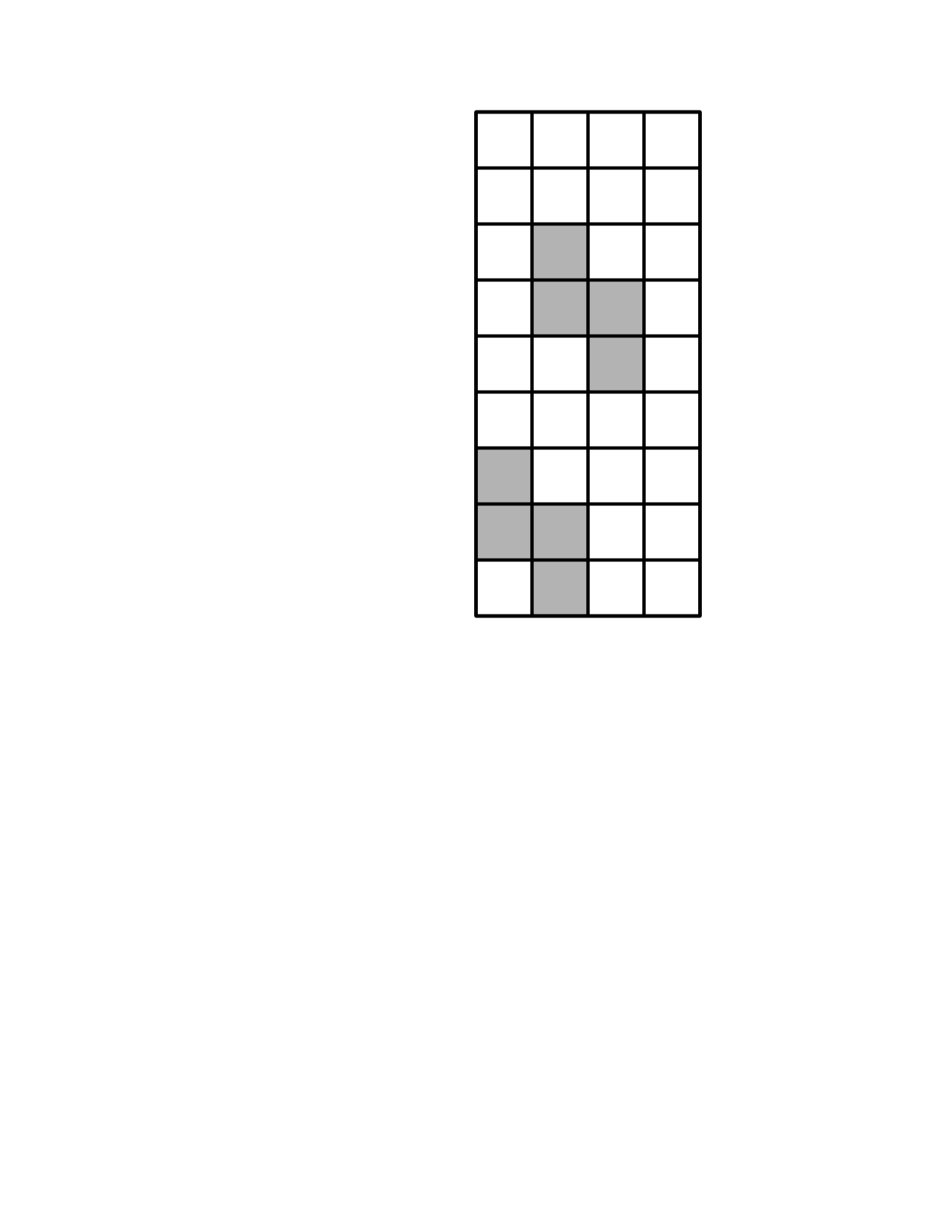}}\qquad\qquad\qquad
\labellist\small
	\pinlabel $1$ at 324 413
	
	\pinlabel $1$ at 359 487
	\pinlabel $1$ at 359 522
	\pinlabel $2$ at 359 557
	
	\pinlabel $1$ at 394 522
	
	\pinlabel $1$ at 359 595
	\pinlabel $1$ at 359 631
	
	\pinlabel $2$ at 395 557
	\pinlabel $2$ at 395 595
	\pinlabel $2$ at 395 629
	\pinlabel $1$ at 395 666
	\pinlabel $1$ at 395 702
	
	\tiny
		
	\endlabellist
\raisebox{0pt}{\includegraphics[scale=0.4]{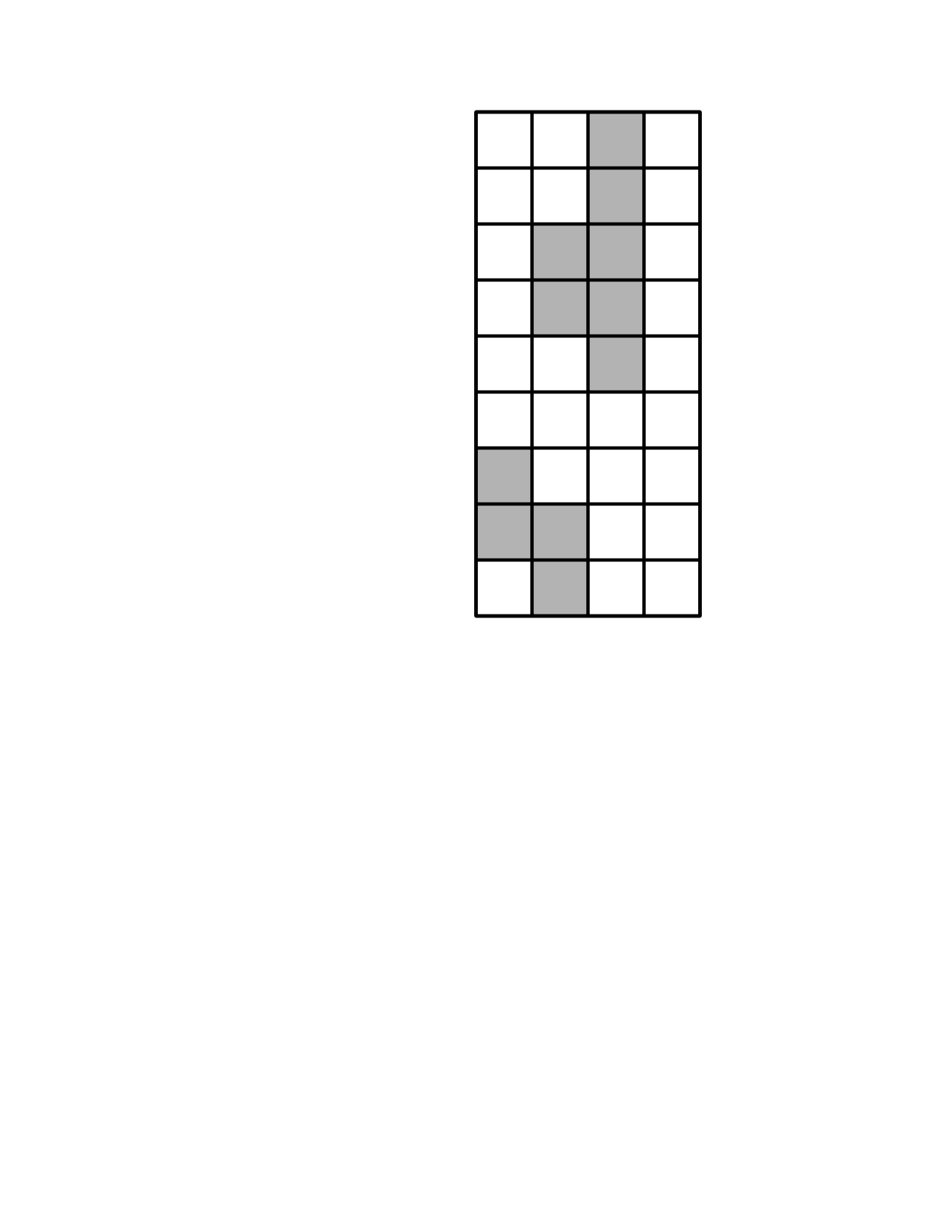}}\qquad\qquad\qquad
\labellist\small
	\pinlabel $1$ at 324 413
	
	\pinlabel $1$ at 359 487
	\pinlabel $1$ at 359 522
	\pinlabel $2$ at 359 557
	
	\pinlabel $1$ at 394 522
	
	\pinlabel $1$ at 359 595
	\pinlabel $1$ at 359 631
	
	\pinlabel $2$ at 395 557
	\pinlabel $2$ at 395 595
	\pinlabel $2$ at 395 629
	\pinlabel $1$ at 395 666
	\pinlabel $1$ at 395 702
	
	\pinlabel $1$ at 431 595

	\tiny
	\pinlabel $\vdots$ at 280 630
	\pinlabel $6t+6$ at 275 666
	\pinlabel $6t+7$ at 275 702
	
	\pinlabel $\cdots$ at 355 370
	\pinlabel \rotatebox{-90}{$-2-2t$} at 395 357
	\pinlabel \rotatebox{-90}{$-1-2t$} at 431 357

\endlabellist
\raisebox{0pt}{\includegraphics[scale=0.4]{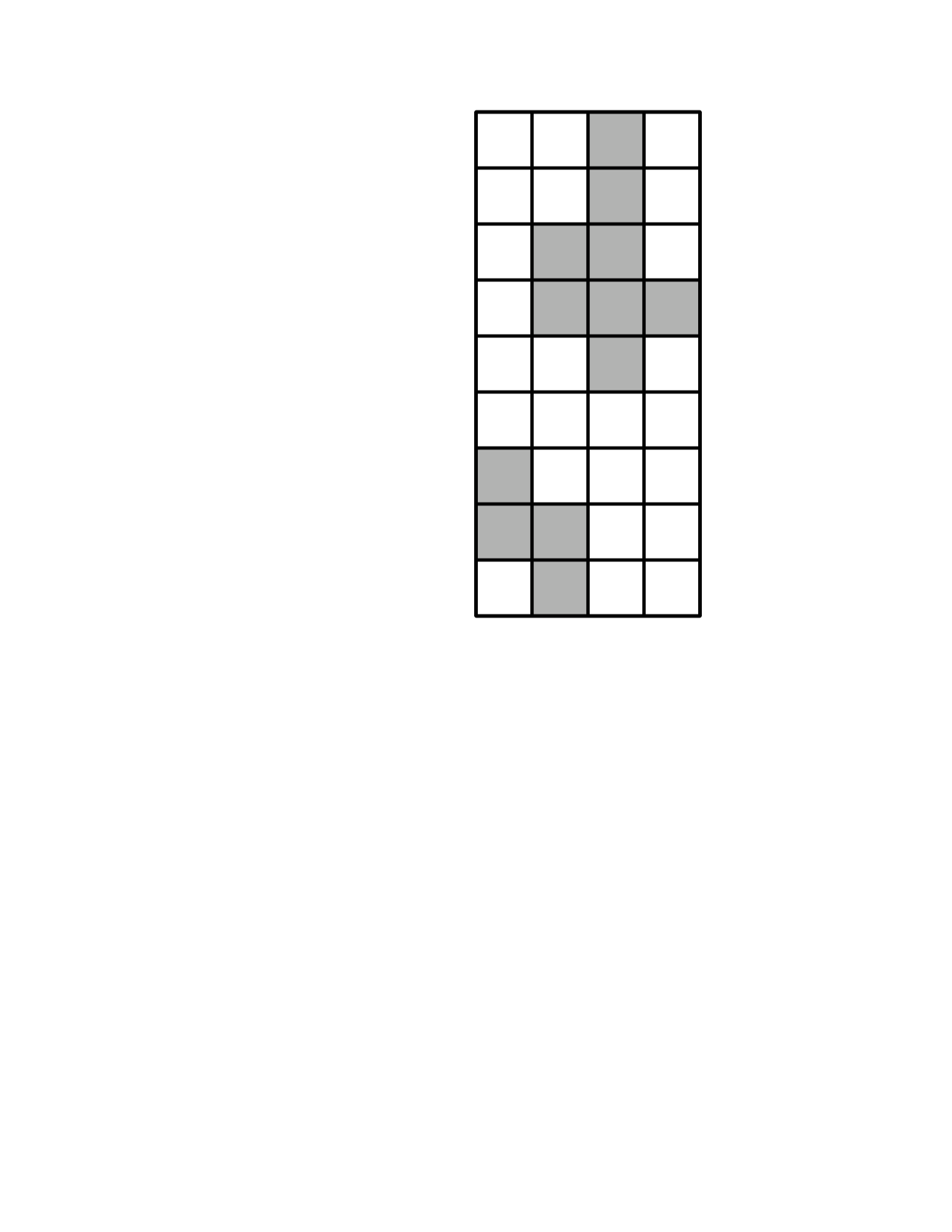}}
\end{center}
\vspace*{20pt}
\caption{The potential new generators (shaded) appearing in $E_1(\si_2^2\Delta^t)$, $E_1(\si_2\si_1^3\si_2\Delta^t)$ and $E_1(\si_1\si_2\si_1^3\si_2\Delta^t)$ (left to right) when using  Proposition \ref{prp:iteration} to compute $\Khred(\overline{\si_2^2\Delta^t})$, $\Khred(\overline{\si_2\si_1^3\si_2\Delta^t})$ and $\Khred(\overline{\si_1\si_2\si_1^3\si_2\Delta^t})$.}
\label{fig:proof-steps}
\end{figure}

\begin{align*}
E_1(\si_2^2\Delta^t) 
&\cong 
\Big(\Khred(\overline{\Delta^t})\oplus\big(\bigoplus_{q=0}^{1}\Khred(U\sqcup U)[\textstyle-2t+\half,6t+\half+q]\big)\Big)[\textstyle-1,1]
\\
E_1(\si_2\si_1^3\si_2\Delta^t) 
&\cong
\Big(\Khred(\overline{\si_2^2\Delta^t})\oplus\big(\bigoplus_{q=0}^{2}\Khred(H^+)[\textstyle-2t+\half,6t+\half+q]\big)\Big)[\textstyle-\frac{3}{2},\frac{3}{2}]
\\
E_1(\si_1\si_2\si_1^3\si_2\Delta^t) 
&\cong
\Big(\Khred(\overline{\si_2\si_1^3\si_2\Delta^t})\oplus\Khred(U)[\textstyle -2t-1+\half,6t+4-\half]\Big)[\textstyle-\frac{1}{2},\frac{1}{2}]
\end{align*}
This illustrates how Proposition \ref{prp:iteration} is used to generate a collection of new generators, and these have been highlighted in Figure \ref{fig:proof-steps}. In particular, $\Khred(\tau_t(\ell))$ may be computed by considering 
\[E_1(\si_1\si_2\si_1^3\si_2\Delta^t)
\cong\Khred(T_{3,3t})[3,3]
\oplus\left(\begin{matrix}
\bigoplus_{q=0}^{1}\Khred(U\sqcup U)[\textstyle-2t+\half,6t+\half+q][\textstyle-3,3] \\
\bigoplus_{q=0}^{2}\Khred(H^+)[\textstyle-2t+\half,6t+\half+q][\textstyle-2,2] \\
\Khred(U)[-2t-1,6t+4]\end{matrix}\right)\] 
in combination with $\Khred'(\tau_t(\ell))$. Notice that only 11 possible new generators appear (see the right-most shaded group described in Figure \ref{fig:proof-steps}).   

Since $\tau_t(\ell)$ is a knot, the single generator of $\Khred'(\tau_t(\ell))$ appears in grading $(-2-3t,2+3t)$ (see Figure \ref{fig:Kh-torus-link}). As a result, the portion of the homology group shown in Figure \ref{fig:proof-steps} must collapse in $\Khred'(\tau_t(\ell))$, placing constraints on the potential differentials calculating $\Khred(\tau_t(\ell))$. This analysis is shown in Figure \ref{fig:pair-steps} and described below.

\begin{figure}[ht!]
\begin{center}
\labellist\small
	\pinlabel $(a)$ at 360 360

	\pinlabel $1$ at 324 414
	\pinlabel $1$ at 324 450
	\pinlabel $2$ at 324 487
	\pinlabel $1$ at 324 522
	\pinlabel $1$ at 324 557
	
	\pinlabel $1$ at 360 450
	\pinlabel $2$ at 360 487
	\pinlabel $2$ at 360 522
	\pinlabel $2$ at 360 557
	
	\pinlabel $1$ at 395 522
	
	\pinlabel $1$ at 360 595
	\pinlabel $1$ at 360 631
	
	\pinlabel $\Longrightarrow$ at 460 520	
	\tiny
	
	\endlabellist
\raisebox{0pt}{\includegraphics[scale=0.4]{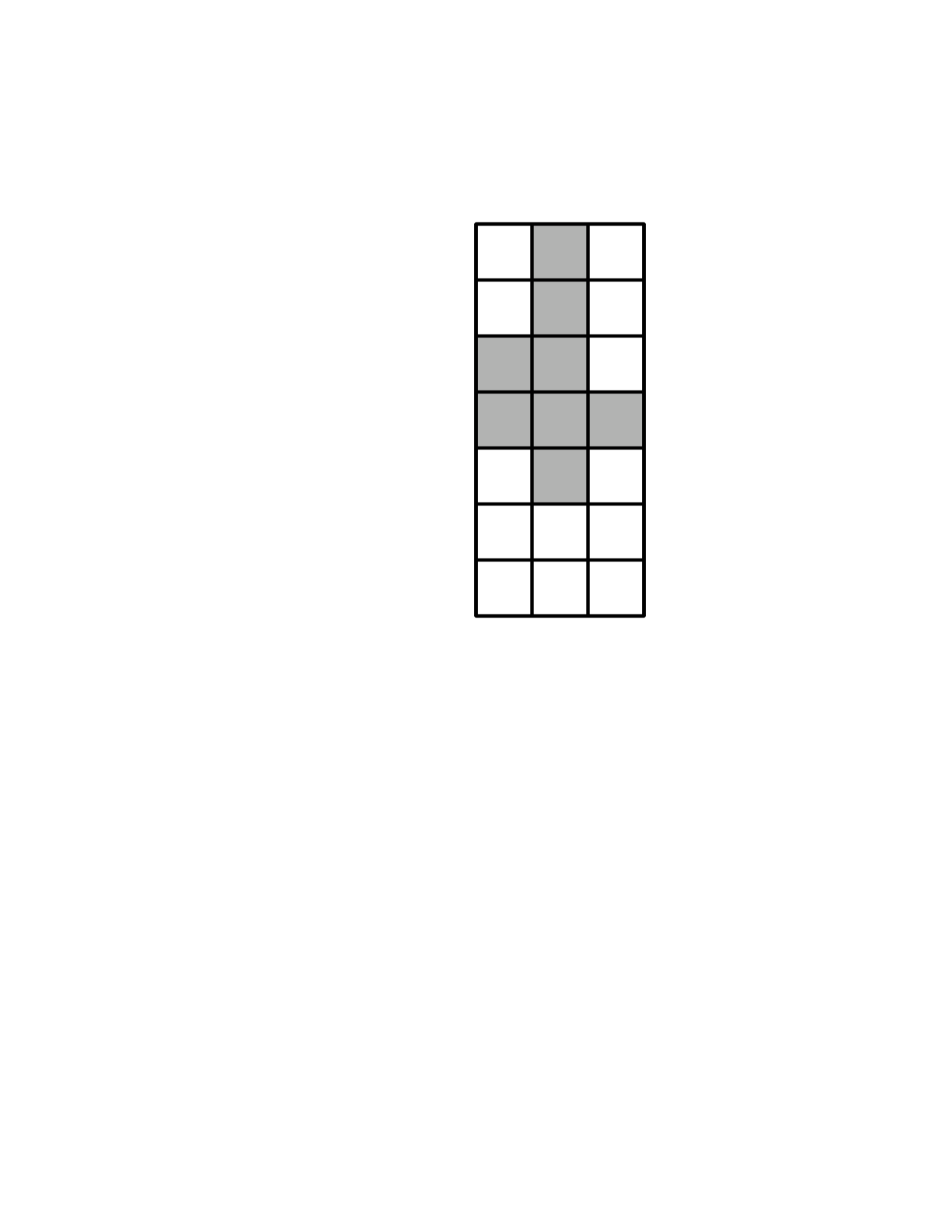}}\qquad\quad
\labellist\small
	\pinlabel $(b)$ at 360 360
	\pinlabel $1$ at 324 414
	\pinlabel $1$ at 324 450
	\pinlabel $2$ at 324 487
	\pinlabel $1$ at 324 522
	\pinlabel $1$ at 324 557
	
	\pinlabel $1$ at 360 450
	\pinlabel $2$ at 360 487
	\pinlabel $2$ at 360 522
	\pinlabel $2$ at 360 557
	
	\pinlabel $1$ at 395 522
	
	\pinlabel $1$ at 360 595
	\pinlabel $1$ at 360 631
	
	\pinlabel $\to$ at 379 520
	\pinlabel $\Longrightarrow$ at 460 520	

	\endlabellist
\raisebox{0pt}{\includegraphics[scale=0.4]{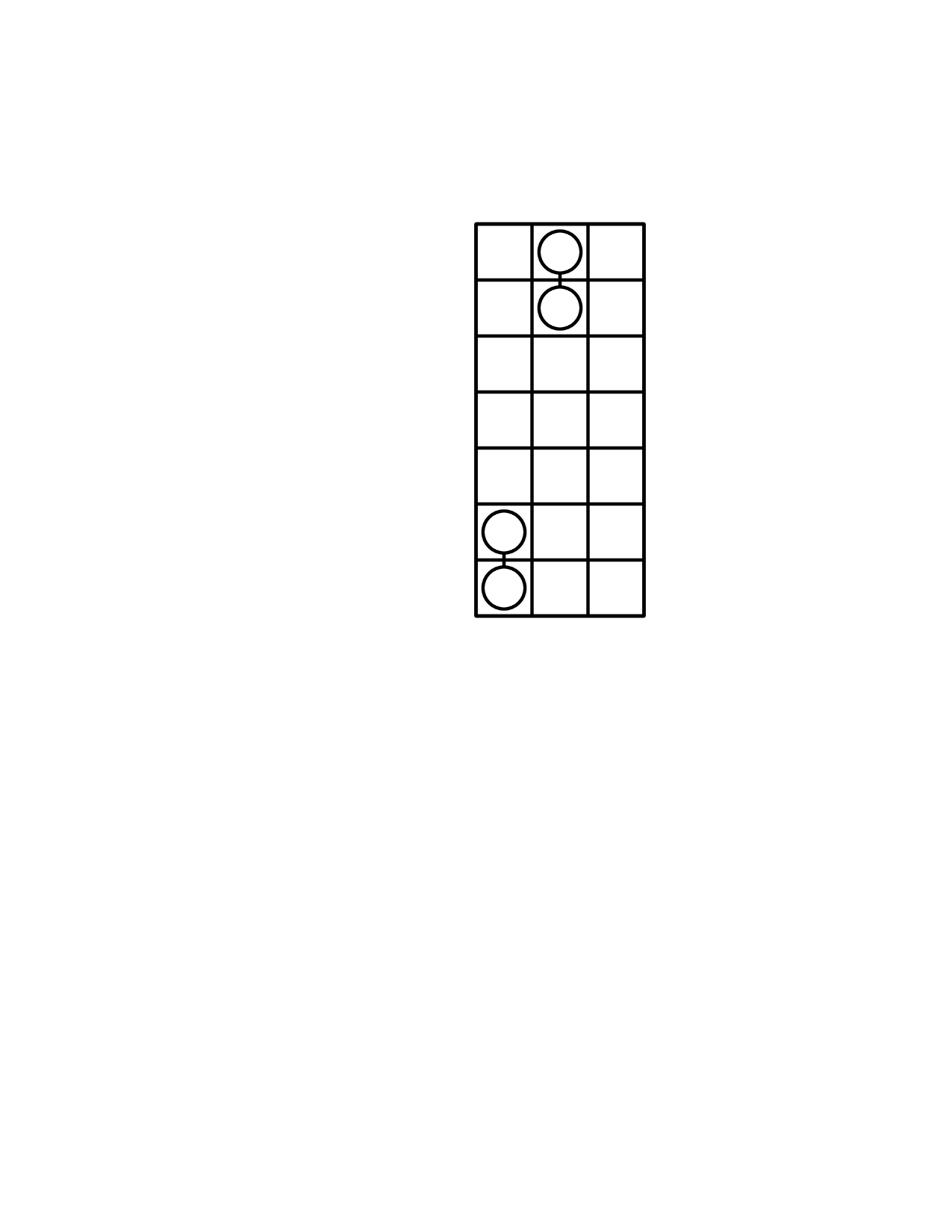}}\qquad\quad
\labellist\small
	\pinlabel $(c)$ at 360 360
	\pinlabel $1$ at 324 414
	\pinlabel $1$ at 324 450
	\pinlabel $2$ at 324 487
	\pinlabel $1$ at 324 522
	\pinlabel $1$ at 324 557
	
	\pinlabel $1$ at 360 450
	\pinlabel $2$ at 360 487
	\pinlabel $1$ at 360 522
	\pinlabel $2$ at 360 557

	\pinlabel $1$ at 360 595
	\pinlabel $1$ at 360 631

	\pinlabel $\to$ at 341 555
	\pinlabel $\Longrightarrow$ at 460 520	
	
	\endlabellist
\raisebox{0pt}{\includegraphics[scale=0.4]{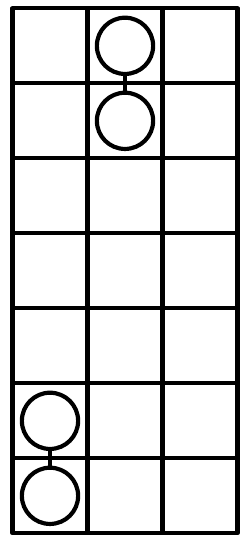}}\qquad\quad
\labellist\small
	\pinlabel $(d)$ at 360 360
	\pinlabel $1$ at 324 414
	\pinlabel $1$ at 324 450
	\pinlabel $2$ at 324 487
	\pinlabel $1$ at 324 522
	
	\pinlabel $1$ at 360 450
	\pinlabel $2$ at 360 487
	\pinlabel $1$ at 360 522
	\pinlabel $1$ at 360 557

	\pinlabel $1$ at 360 595
	\pinlabel $1$ at 360 631
	
	\pinlabel $\to$ at 341 485	
	\pinlabel $\Longrightarrow$ at 460 520	
	
\endlabellist
\raisebox{0pt}{\includegraphics[scale=0.4]{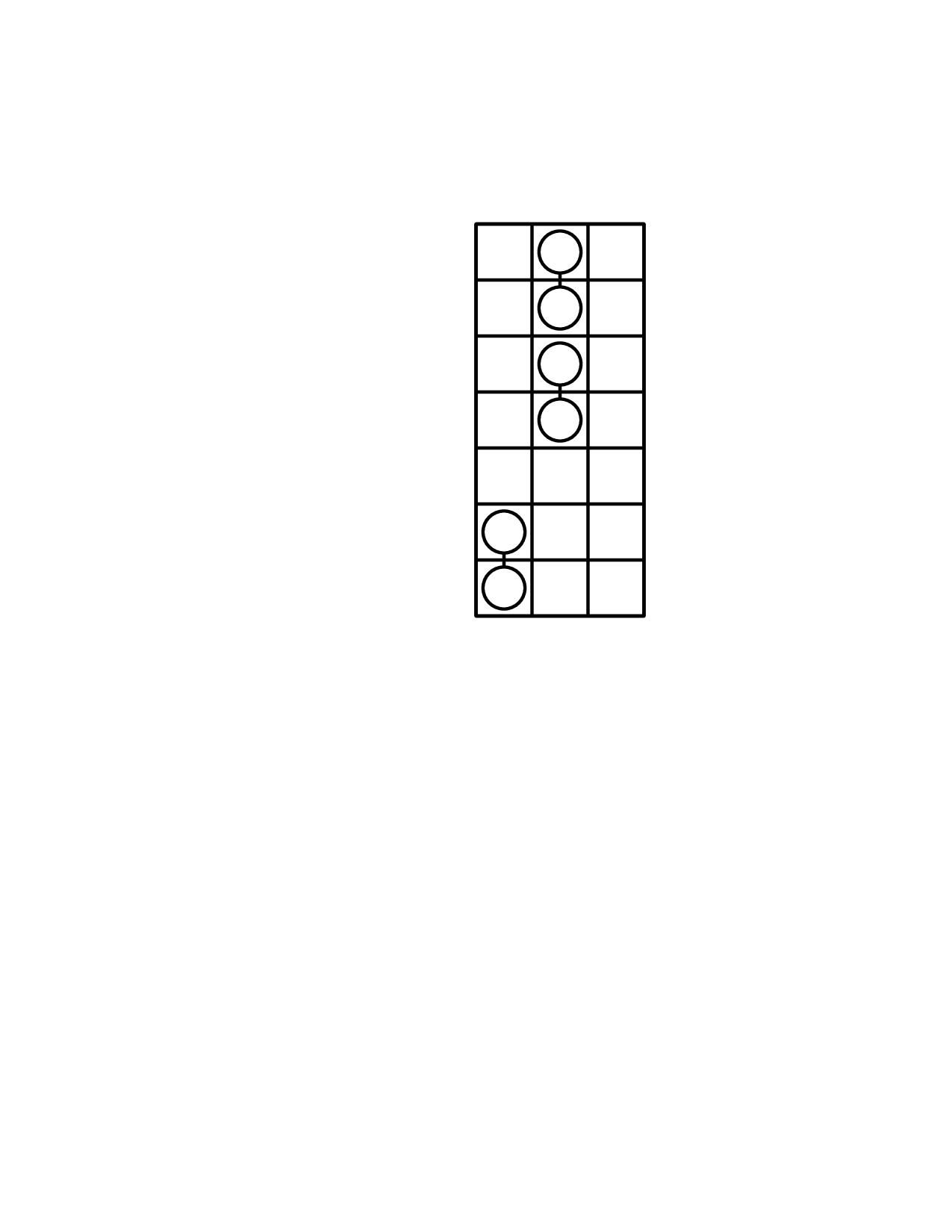}}\qquad\quad
\labellist\small
	\pinlabel $(e)$ at 360 360
	\pinlabel $1$ at 324 414
	\pinlabel $1$ at 324 450

	\pinlabel $1$ at 360 522
	\pinlabel $1$ at 360 557

	\pinlabel $1$ at 360 595
	\pinlabel $1$ at 360 631

\endlabellist
\raisebox{0pt}{\includegraphics[scale=0.4]{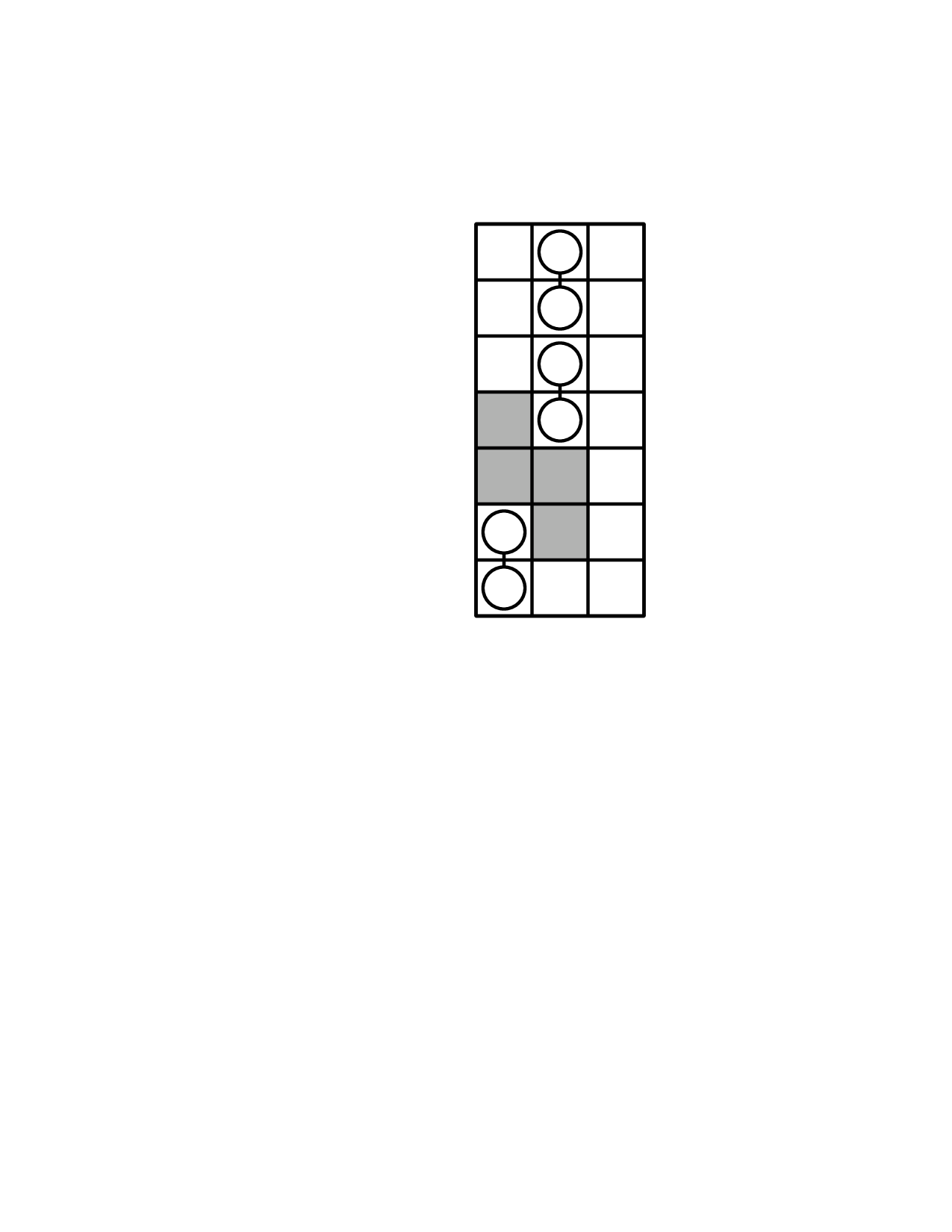}}
\end{center}
\vspace*{10pt}
\caption{Using the pairing computing $\Khred'(\tau_t(\ell))$ to determine $\Khred(\tau_t(\ell))$ up to indeterminate summands.}
\label{fig:pair-steps}
\end{figure}

First notice that the top- and bottom-most generators must each pair as in Figure \ref{fig:pair-steps}(b). Since this leaves nothing with which the right-most generator can pair, there must be a differential canceling this generator. Similarly, as shown in Figure \ref{fig:pair-steps}(c), the upper most $\bF^2$ cannot be cancelled by any higher differential as is required; this forces a second non-trivial differential as shown and determines the pairing in Figure \ref{fig:pair-steps}(d). 

This determines the homology, up to indeterminate summands. In particular, as in the proof of Claim \ref{cl:2}, there is a single differential of the form $\bF^2\to\bF^2$ that is non-trivial whereby either a single pair or all 4 generators are cancelled (see Figure \ref{fig:pair-steps}(d)). This gives rise to the $t^{\rm th}$ indeterminate summand shown in Figure \ref{fig:Kh-torus-link}.
\end{proof}

\subsection{A lower bound for homological width} Much of the strength of Khovanov homology is retained by relaxing the absolute $\half\bZ\oplus\half\bZ$-grading to a relative $\bZ\oplus\bZ$-grading. Moreover, by ignoring the secondary $q$-grading, the reduced Khovanov homology takes the form $\Khred(L)\cong\bigoplus_{\delta=1}^{w(L)}(\bF^{b_\delta})_{\delta}$; the positive integer $w(L)$ is the homological width of the link $L$ (as in Definition \ref{def:width}). Given this relatively $\bZ$-graded version, it is easy to verify that $\det(L)=\big|\sum_\delta (-1)^\delta b_\delta\big|=\chi(\Khred(L))$ (see \cite[Proposition 2.2]{Watson2008}, for example). In the present setting, we have the following consequence of Proposition \ref{prp:branch-set-homology}:
\[\Khred(\tau_t(\ell))\cong (\bF^{b_1})_1\oplus\cdots\oplus(\bF^{b_{t+1}})_{t+1}\]
as a relatively $\bZ$-graded group, where $b_i>0$ for each $1\le i\le t+1$ (of course, due to indeterminate summands, the integers $b_i$ have not been determined exactly). 

\begin{proposition}\label{prp:ell-plus-one}
The reduced Khovanov homology of the branch set $\tau_t(\ell+1)$, as a relatively $\bZ$-graded group, is 
\[\Khred(\tau_t(\ell))\cong (\bF^{b_1})_1\oplus\cdots\oplus(\bF^{b_{t+1}})_{t+1}\oplus(\bF)_{t+2}.\] Moreover, the quantum grading of the generator in the $(t+2)^{\text nd}$ (relative) grading, is strictly smaller than the largest quantum grading in the $(t+1)^{\text st}$ (relative) grading supporting non-trivial homology 
\end{proposition}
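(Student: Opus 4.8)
The plan is to derive $\Khred(\tau_t(\ell+1))$ from $\Khred(\tau_t(\ell))$ by a single skein exact sequence, in the style of the proofs of Section~\ref{sec:Kh}. By Proposition~\ref{prp:framed-tangle} the two braids differ by one generator: for both parities of $t$ one has $\beta_{t,\ell}=\si_1\si_2\si_1^3\si_2\Delta^t$ and $\beta_{t,\ell+1}=\si_1\cdot\beta_{t,\ell}=\si_1^2\si_2\si_1^3\si_2\Delta^t$. I would apply Proposition~\ref{prp:iteration} to the distinguished crossing $\si_1$ obtained by writing $\beta_{t,\ell+1}=\si_1\cdot(\si_1\si_2\si_1^3\si_2\Delta^t)$ (so $n=1$, $\beta_1$ trivial, $\beta_2=\si_1\si_2\si_1^3\si_2\Delta^t$), which gives a spectral sequence computing $\Khred(\tau_t(\ell+1))$ with
\[E_1\cong\Big(\Khred(\tau_t(\ell))\oplus\Khred(R)[\textstyle-\frac12(c-1),\frac12(3c+1)]\Big)[\textstyle-\frac12,\frac12].\]
Here $\Khred(\tau_t(\ell))$ is known by Proposition~\ref{prp:branch-set-homology}, and the unoriented resolution $R$, after the Reidemeister~1 reductions built into Proposition~\ref{prp:iteration} (using $\sigma_1$ adjacent to the turnback), is exactly the link that occurs as the final unoriented resolution in the proof of Proposition~\ref{prp:branch-set-homology}; there it was identified with the unknot, so $\Khred(R)=\bF$. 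The constant $c=n_-(R)$ is computed as in Section~\ref{sub:mc}; it is odd (consistent with $\tau_t(\ell+1)$ being a two-component link), and a short check gives $c=4t+3$, so that the grading shifts place the target $U$-generator exactly one $\delta$-diagonal above the support of $\Khred(\tau_t(\ell))[-\frac12,\frac12]$, i.e.\ in relative $\delta$-grading $t+2$. Thus the spectral sequence collapses to the two-step mapping cone of a single map $d\co\Khred(\tau_t(\ell))[-\frac12,\frac12]\to\Khred(U)[\textstyle-\frac12(c-1),\frac12(3c+1)][-\frac12,\frac12]$ raising $\delta$ by one and fixing $q$.

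It remains to show that $d=0$; then $\Khred(\tau_t(\ell+1))\cong\Khred(\tau_t(\ell))[-\frac12,\frac12]\oplus\Khred(U)[\textstyle-\frac12(c-1),\frac12(3c+1)][-\frac12,\frac12]$, which as a relatively $\bZ$-graded group is precisely $(\bF^{b_1})_1\oplus\cdots\oplus(\bF^{b_{t+1}})_{t+1}\oplus(\bF)_{t+2}$. For this I would invoke Turner's spectral sequence (Theorem~\ref{thm:turner}), exactly as in the proofs of Claims~\ref{cl:1}--\ref{cl:3} and of Proposition~\ref{prp:branch-set-homology}: the braid word $\si_1^2\si_2\si_1^3\si_2\Delta^t$ has underlying permutation the transposition $(1\,3)$, so $\tau_t(\ell+1)$ has two components and $\rk\Khred'(\tau_t(\ell+1))=2$, with the two surviving generators placed in the $(\delta+q)$-gradings $0$ and $2\lambda$, where $\lambda$ is the linking number of the two components (read off from the braid word). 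Were $d$ nonzero it would cancel the $U$-generator against a generator of $\Khred(\tau_t(\ell))[-\frac12,\frac12]$ in the neighbouring $\delta$-diagonal, altering the rank and the $(\delta+q)$-profile of $\Khred(\tau_t(\ell+1))$ in a way incompatible with the higher pairings collapsing everything onto these two survivors; hence $d=0$. (As in the proof of Claim~\ref{cl:2}, the indeterminate summand of $\Khred(\tau_t(\ell))$ enters here only through its effect on ranks, which is immaterial.)

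Finally, the ``moreover'' clause is a grading computation: the lone generator of the new $(t+2)$nd diagonal is the image of the generator of $\Khred(U)$ under the shift $[-\frac12(c-1),\frac12(3c+1)][-\frac12,\frac12]$, so its quantum grading is $\frac12(3c+2)$, and one checks against Figure~\ref{fig:Kh-torus-link} that this is strictly smaller than the largest quantum grading carried by diagonal $t+1$ of $\Khred(\tau_t(\ell))$. The main obstacle is the bookkeeping: confirming that $R\simeq U$ for the new resolution (which reuses the computation in Proposition~\ref{prp:branch-set-homology}), pinning down the constant $c$ and the linking number $\lambda$, and tracking the grading shifts precisely enough to obtain a strict rather than a non-strict inequality; as a by-product the framing identity $\det\tau_t(\ell+1)=|\ell+1|$ falls out of the Euler characteristic of the answer (compare Remark~\ref{rmk:det}).
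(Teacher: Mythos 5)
Your proposal is correct and follows essentially the same route as the paper: a single application of the mapping cone (equivalently, Proposition \ref{prp:iteration} with $n=1$) with unoriented resolution $U$ and $c=4t+3$, placing the one new generator on the relative diagonal $t+2$ at quantum grading $\frac12(3c+2)=6t+\frac{11}{2}$, followed by Turner's spectral sequence to force the connecting differential to vanish. The only difference is one of explicitness: where you assert that a nonzero differential would be ``incompatible with the higher pairings,'' the paper pins this down by observing that the two survivors of $\Khred'(\tau_t(\ell+1))$ sit in $\delta+q=0$ and $\delta+q=4t+4$ and that the top-most generator at $\delta+q=4t+5$ must cancel against something at $\delta+q=4t+4$, which forces the new generator to be present in either case.
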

\begin{proof} First we determine $\Khred(\tau(\ell+1))$ as an absolutely bigraded group. Applying the mapping cone (\ref{eqn:cone}), 
\begin{align*} \Khred(\tau(\ell+1))&\cong H_*\left( \Khred(\tau(\ell))[\textstyle-\half,\half]\to\Khred(U)[\textstyle -2t-\frac{3}{2},6t+\frac{11}{2}]\right)\\
&\cong H_*\left( \Khred(\tau(\ell)) \to (\bF)^{\delta=-2t-1}_{q=6t+5}\right)[\textstyle -\half,\half]\end{align*} since $c=4t+3$. Notice that $\Khred^{-2-2t}_{6t+7}(\tau(\ell))\cong \bF$ from Figure \ref{fig:Kh-torus-link}, verifying the second claim of the proposition.

To verify the first claim, we analyze the potential new generator as in Figure \ref{fig:surgery-sample} (note that the pairings in the portion the group that is not displayed are uniquely determined). In particular, by applying Theorem \ref{thm:turner} we have that $\Khred'(\tau(\ell+1))\cong\bF^2$ with generators supported in $\delta+q=0$ and $\delta+q=4t+4$. The latter must be the generator that does not pair with the top-most generator in Figure \ref{fig:surgery-sample} (for absolute gradings, consult Figure \ref{fig:Kh-torus-link}). Despite this indeterminacy  Turner's spectral sequence, as in the proof of Proposition \ref{prp:branch-set-homology}, ensures that this new generator is present and \[\Khred(\tau_t(\ell+1))\cong(\bF^{b_1})_1\oplus\cdots\oplus(\bF^{b_{t+1}})_{t+1}\oplus(\bF)_{t+2}\] as a relatively $\bZ$-graded group. 
\end{proof}

\begin{figure}[ht!]\begin{center}
\labellist\small
	\pinlabel $1$ at 324 414
	\pinlabel $1$ at 324 450

	\pinlabel $1$ at 360 522
	\pinlabel $1$ at 360 557

	\pinlabel $1$ at 360 595
	\pinlabel $1$ at 360 631
	
	\pinlabel $1$ at 395 557
		\pinlabel $\Longrightarrow$ at 475 520	

\endlabellist
\raisebox{0pt}{\includegraphics[scale=0.4]{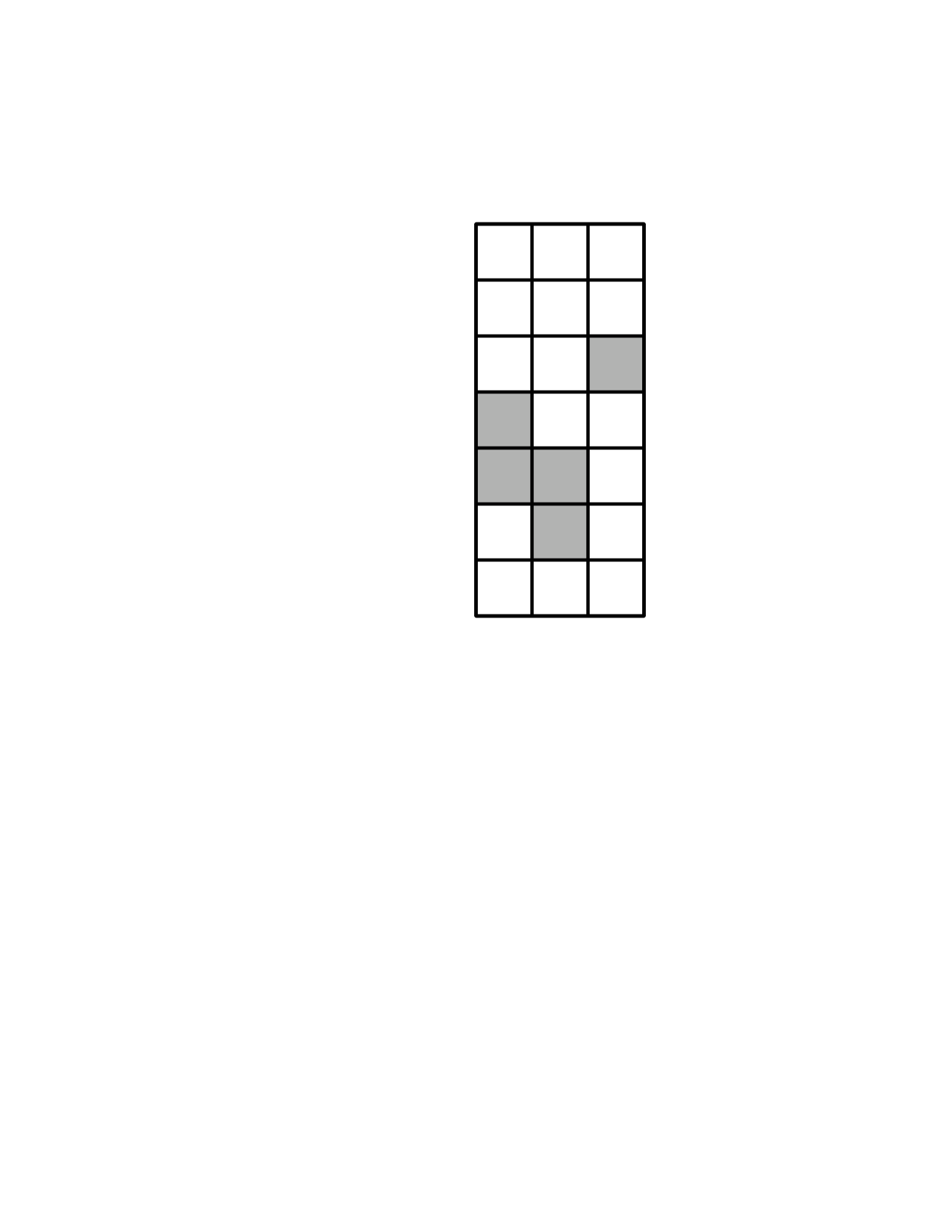}}
\qquad
\qquad
\labellist\small
	\pinlabel $1$ at 324 414
	\pinlabel $1$ at 324 450

	\pinlabel $1$ at 360 522
	\pinlabel $1$ at 360 557
	
	\pinlabel $1$ at 360 595
	\pinlabel $1$ at 360 631
		\pinlabel $1$ at 395 557

\endlabellist
\raisebox{0pt}{\includegraphics[scale=0.4]{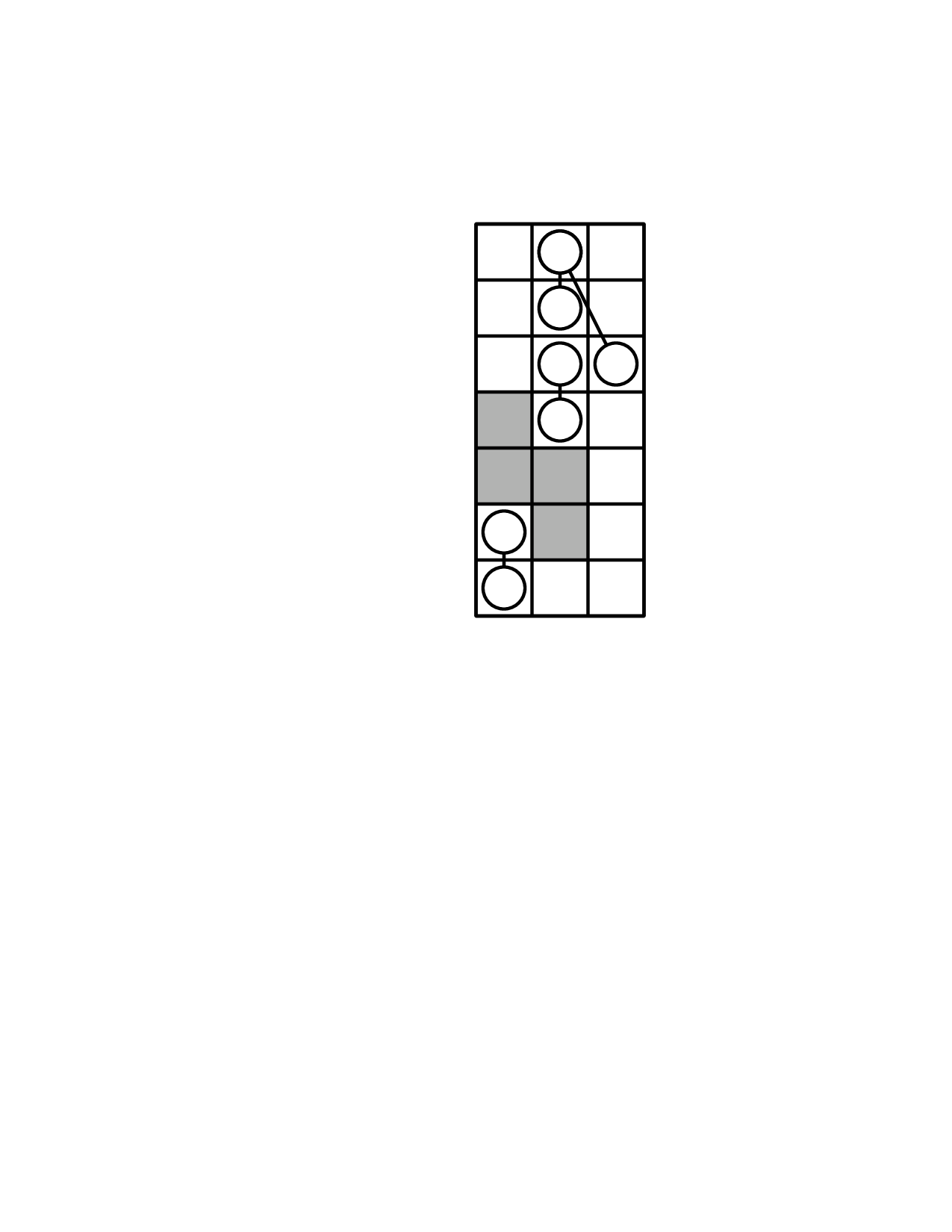}}
\end{center}
\caption{Determining $\Khred(\tau_t(\ell+1))$, showing only the (relative) $\delta$-gradings $t,t+1,t+2$. Notice that the $(t+2)^{\rm nd}$ $\delta$-grading is forced to appear in the group  $\Khred(\tau_t(\ell+1))$, despite the indeterminacy in the pairing for higher differentials as shown.
}
\label{fig:surgery-sample}
\end{figure}

\begin{corollary}\label{crl:generic}The tangle $(B^3,\tau_t)$ is generic in the sense of \cite[Definition 5.15]{Watson2008}.\end{corollary}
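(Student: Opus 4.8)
The plan is to unwind \cite[Definition 5.15]{Watson2008} and to check that the two computations already in hand — Proposition \ref{prp:branch-set-homology} and Proposition \ref{prp:ell-plus-one} — supply exactly the data that definition demands. Recall that genericity is a condition comparing the reduced Khovanov homology of two consecutive integer closures $\tau_t(\ell)$ and $\tau_t(\ell+1)$: one asks that $\Khred(\tau_t(\ell))$ be supported in consecutive $\delta$-gradings with every rank strictly positive, and that passing from $\ell$ to $\ell+1$ enlarges the support by a single new $\delta$-grading at one extreme, occupied by a single generator whose $q$-grading is controlled relative to the homology already present. The reason this condition is isolated in \cite{Watson2008} is that, once verified, the mapping cone (\ref{eqn:cone}) forces the behaviour of $w(\tau_t(n))$ for all remaining integer, hence rational, framings, so that $w_{K_t}$ becomes a finite computation (this is precisely what is used in the proof of Proposition \ref{prp:branch-width}).

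First I would record which integer $\ell$ plays the distinguished role: by construction this is $\ell=-1$ for $t$ odd and $\ell=-5$ for $t$ even, the value already fixed at the start of Section \ref{sec:width}. Next I would read off from Proposition \ref{prp:branch-set-homology} (equivalently, from Figure \ref{fig:Kh-torus-link}) that
\[\Khred(\tau_t(\ell))\cong (\bF^{b_1})_1\oplus\cdots\oplus(\bF^{b_{t+1}})_{t+1}\]
as a relatively $\bZ$-graded group with each $b_i>0$, which is the first half of the genericity hypothesis. Then I would invoke Proposition \ref{prp:ell-plus-one}: it gives
\[\Khred(\tau_t(\ell+1))\cong (\bF^{b_1})_1\oplus\cdots\oplus(\bF^{b_{t+1}})_{t+1}\oplus(\bF)_{t+2},\]
so exactly one new $\delta$-grading appears, carrying a single generator, and — crucially — the second assertion of that proposition pins the $q$-grading of this generator to lie strictly below the largest $q$-grading supporting homology in the $(t+1)^{\rm st}$ $\delta$-grading. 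Matching these two displays against \cite[Definition 5.15]{Watson2008} term by term then completes the proof.

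The main obstacle is bookkeeping rather than mathematics: one must confirm that the quantum-grading inequality furnished by Proposition \ref{prp:ell-plus-one} is the one the definition requires (as opposed to the reverse inequality or the opposite extreme), and that the indeterminate summands of Propositions \ref{prp:torus} and \ref{prp:branch-set-homology} do not interfere — that is, that genericity is insensitive to the exact value of $b_{t+1}$ and to the shape of the $t^{\rm th}$ indeterminate block. Since the definition concerns only which $\delta$-diagonals are occupied together with the extremal $q$-gradings on the outermost ones, and since each $b_i$ was shown to be strictly positive regardless of that indeterminacy, this causes no difficulty. The content of the corollary is therefore already packaged in the two preceding propositions, and the proof amounts to citing them and checking the grading conventions line up.
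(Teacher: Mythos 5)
Your proposal is correct and follows the same route as the paper: both verify genericity by citing Proposition \ref{prp:branch-set-homology} and Proposition \ref{prp:ell-plus-one}, checking that all $b_i>0$ (no blank diagonals), that the width increases by exactly one at the framing $\ell+1$ with a single new generator, and that the quantum grading of that generator satisfies the required inequality. The paper phrases this via the intermediate notions of \emph{expansion short form} for $\tau_t(\ell)$ and \emph{expansion long form} for $\tau_t(\ell+1)$ from \cite[Definitions 5.3 and 5.7]{Watson2008}, but the substance of your verification is the same.
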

\begin{proof}Each of the $b_i$ in the expression for $\Khred(\tau_t(\ell+1))$ are positive so there are no blank diagonals (see \cite[Remark 4.19]{Watson2008}). Note that $w(\tau(\ell+1))=w(\tau(\ell))+1$; the second part of Proposition \ref{prp:ell-plus-one} verifies expansion long form for $\tau(\ell+1)$, while $\tau(\ell)$ has expansion short form by Proposition \ref{prp:branch-set-homology} \cite[Definition 5.3]{Watson2008}. These observations ensure that the tangle is expansion generic \cite[Definition 5.7]{Watson2008}, hence generic \cite[Definition 5.15]{Watson2008}. \end{proof}

\begin{proof}[Proof of Proposition \ref{prp:branch-width}] First notice that $w(\tau_t(\ell))=t+1\le w(\tau_t(n))\le t+2=w(\tau_t(\ell+1))$ for all $n$, by \cite[Lemma 4.20]{Watson2008}; we have identified the single integer framing at which the branch sets  change homological width. By Corollary \ref{crl:generic} the tangle is generic. Hence the minimum width on branch sets corresponding to integer surgeries provides a lower bound for the width of a branch set associated with any non-trivial surgery \cite[Theorem 5.16]{Watson2008}. That is, $w_{K_t} = t+1$.
\end{proof}

This completes the proof of Theorem \ref{thm:twist}. Notice that in the case $t=1$ (where $K_1$ is the figure eight knot), we only have $w(\tau_1(n))>2$ for $n\ge0$. From this it follows that $w(\tau_1(r))>2$ for any rational $r\ge0$, confirming that $\pi_1(S^3_r(K_1))$ must be infinite for non-negative surgery coefficients  (see \cite[Theorem 7.2]{Watson2008}). However $S^3_r(K_1)\cong S^3_{-r}(K_1)$ since $K_1$ is amphichiral.

\begin{remark}\label{rmk:det} In general \[\Khred(\tau_t(\ell+m))\cong (\bF^{b_1})_1\oplus\cdots\oplus (\bF^{b_t})_t\oplus(\bF^{b^*_{t+1}})_{t+1}\oplus(\bF^{b_{t+2}})_{t+2} \] where $0<b_{t+1}^*\le b_{t+1}$ and $0<b_{t+2}\le m$, as can be seen from the spectral sequence with $E_1(\si_1^{1+m}\si_2\si_1^3\si_2\Delta^t)$ (resolving the first $m$ crossings) following an argument similar to the proof of Proposition \ref{prp:ell-plus-one}. Note that setting \[m=\begin{cases}1 & {\rm for\ } t {\rm \ odd} \\ 5 & {\rm for\ } t {\rm \ even}\end{cases}\] gives \[\det(\tau_t(0))=\chi(\Khred(\tau(0))=\chi\left( E_1(\si_1^{1+m}\si_2\si_1^3\si_2\Delta^t)\right)= 0\] as claimed (and left to the reader) in the proof of Proposition \ref{prp:framed-tangle}.\end{remark}

This calculation of $\Khred(\tau_t(\ell+m))$ should be compared with \cite[Lemma 4.10]{Watson2008} which establishes the same stable behaviour for tangles associated with a strongly invertible knot in general. While we have appealed to the machinery of \cite{Watson2008} in order to establish the width bound of Proposition \ref{prp:branch-width}, the reader should be assured that the computations required for this paper could be made entirely self contained. One need only fix a convention for the branch sets $\tau_t(r)$ when $r\in\bQ$ (as in \cite[Section 3]{Watson2008}), and iterate the mapping cone from Section \ref{sub:mc}. The condition that the tangle be {\em generic} simply ensures that this calculation is possible without the need to calculate any differentials. 

It seems worth noting that results of Boyer and Zhang ensure that a finite filling on a knot in $S^3$ can only occur with integer or half-integer surgery coefficient  \cite{BZ1996}. While our proof does not appeal to this fact, it is interesting that the Khovanov homology of only two branch sets associated with integer surgeries suffice to prove Theorem \ref{thm:twist}.

\frenchspacing

\bibliographystyle{QT}
\bibliography{twist-arxiv}

\end{document}